\numberwithin{equation}{section}
\theoremstyle{plain}
\newtheorem{theorem}{Theorem}[section]
\newtheorem{lemma}[theorem]{Lemma}
\newtheorem{corollary}[theorem]{Corollary}
\newtheorem{proposition}[theorem]{Proposition}
\theoremstyle{definition}
\newtheorem{remark}[theorem]{Remark}
\begin{document}

\title[Examples of cyclic polynomially bounded operators]{Examples of cyclic polynomially bounded operators that are not similar to contractions}

\author{Maria F. Gamal'}
\address{
 St. Petersburg Branch\\ V. A. Steklov Institute 
of Mathematics\\
 Russian Academy of Sciences\\ Fontanka 27, St. Petersburg\\ 
191023, Russia  
}
\email{gamal@pdmi.ras.ru}


\subjclass[2010]{ Primary 47A60; Secondary 47A65,  47A16, 47A20.}

\keywords{Polynomially bounded operator, similarity, contraction, 
unilateral shift, isometry, $C_0$-contraction, $C_0$-operator}


\begin{abstract}
A question if a polynomially bounded 
operator is similar  
to a contraction was posed by Halmos and was answered in the negative 
by Pisier.  His counterexample is an operator
of infinite multiplicity, 
while all
its restrictions on invariant subspaces of finite multiplicity are similar to 
contractions.  In the paper, cyclic polynomially bounded 
operators which are 
not similar to  contractions and are quasisimilar to $C_0$-contractions 
or to isometries 
are constructed. The construction is based on a perturbation of the sequence
 of finite dimensional operators which is uniformly polynomially bounded,
 but is not uniformly completely polynomially bounded, constructed by Pisier.
\end{abstract} 

\maketitle

\section{Introduction}

Let $\mathcal H$ be a Hilbert space, and let $T\colon \mathcal H\to \mathcal H$ 
be an operator
on $\mathcal H$.
$T$ is called {\it polynomially bounded}, if there exists a constant $M$
such that 
 \begin{equation} \label{11} \|p(T)\|\leq M\|p\|_\infty \text{ \ for every polynomial } p. 
\end{equation}
The smallest constant $M$ which satisfies  \eqref{11} is denoted by $M_{pb}(T)$ 
and is 
called the {\it polynomial bound} of $T$. If $T$ is not polynomially bounded,
then $M_{pb}(T) = \infty$.

For a natural number $n$ a $n\times n$ matrix can be regarded as an operator on 
$\ell_n^2$, its norm is denoted by the symbol $\|\cdot\|_{\mathcal L(\ell_n^2)}$. 
For a family of polynomials $[p_{ij}]_{i,j=1}^n$ put
$$\| [p_{ij}]_{i,j=1}^n\|_{H^\infty(\ell_n^2)}= 
\sup\{\| [p_{ij}(z)]_{i,j=1}^n\|_{\mathcal L(\ell_n^2)}, \ z\in\mathbb  D\}.$$
For an operator $T$ on $\mathcal H$ and a family of polynomials 
$[p_{ij}]_{i,j=1}^n$ 
the operator $[p_{ij}(T)]_{i,j=1}^n$ acting on the space 
$\oplus_{j=1}^n \mathcal H$ 
is defined.
$T$ is called {\it completely polynomially bounded}, 
if there exists a constant $M$ such that
 \begin{equation} \label{12}\begin{aligned}\| [p_{ij}(T)]_{i,j=1}^n\|\leq
M\| [p_{ij}]_{i,j=1}^n\|_{H^\infty(\ell_n^2)}\\ 
\text{ \ for every family of polynomials } [p_{ij}]_{i,j=1}^n.  \end{aligned}\end{equation}
The smallest constant $M$ which satisfies  \eqref{12} is denoted by $M_{cpb}(T)$ and is 
called the {\it complete polynomial bound} of $T$. If $T$ is not complete 
polynomially bounded, then $M_{cpb}(T) = \infty$.

An operator $T$ is called a {\it contraction} if $\|T\|\leq 1$.
The following criterion for an operator to be similar to a contraction is proved 
in [Pa]:

{\it An operator $T$ is similar to a contraction if and only if 
$M_{cpb}(T)<\infty$,
and $$M_{cpb}(T) = 
\inf \{\|X\|\|X^{-1}\|: \|XTX^{-1}\| \leq 1\},$$ 
where the operators $X$ are invertible.}

The question if a polynomially bounded 
operator is similar  
to a contraction was posed by Halmos [H] and was answered in the negative 
by Pisier [Pi].  The counterexample is an appropriate Foguel--Hankel operator. 
Recall that $\Gamma$ is a (vector-valued) Hankel operator if and only if
 $S_k^\ast\Gamma = \Gamma S_n$, where $1\leq n, k\leq\infty$, 
$S_n$ is the unilateral shift of multiplicity $n$. 
A Foguel--Hankel operator is an operator of the form 
$$\begin{pmatrix} S_k^\ast & \Gamma \\  \mathbb  O & S_n\end{pmatrix},$$
where $1\leq n, k\leq\infty$, $\Gamma$ is a Hankel operator.
There exists a Hankel operator $\Gamma$ such that the operator   
$$T=\begin{pmatrix} S_\infty^\ast & \Gamma \\  \mathbb  O & S_\infty\end{pmatrix}$$
is polynomially bounded  and is not similar  
to a contraction [Pi]. Since $\mu_{S_\infty}=\infty$, 
$\mu_T=\infty$, where $\mu_T$ is the multiplicity of the operator $T$. 

On the other hand, if $1\leq n <\infty$ or $1\leq k<\infty$, 
and $\Gamma$ is a Hankel operator, then 
the Foguel--Hankel operator 
$\begin{pmatrix} S_k^\ast & \Gamma \\  \mathbb  O & S_n\end{pmatrix}$
 is polynomially bounded 
 if and only if it is similar  
to a contraction [DP, Theorem 4.4 and Corollary 4.5], [Pe, Theorem 15.1.2], 
see references in [DP] and in [Pe] to the history of the question.
Therefore, if a Foguel--Hankel operator $T$
is polynomially bounded, and if $\mathcal M$ is an invariant subspace of $T$ such that 
$\mu_{T|_{\mathcal M}}<\infty$, then $T|_{\mathcal M}$ is similar to a contraction,
even if $T$ is not similar to a contraction, see Lemma \ref{lem13} below for the proof.

The purpose of this paper is to construct a polynomially bounded 
operator $T$ such that 
$T$ is not similar to a contraction and $\mu_T=1$, that is, $T$ is {\it cyclic}. 
The main result of the paper is Theorem \ref{thm71}, where a polynomially bounded 
operator $T$ 
is constructed such that 
$T$ is not similar to a contraction and there exist quasiaffinities $X$ and $Y$, 
an {\it outer} function $g\in H^\infty$, and a cyclic $C_0$-contraction $T_0$
such that $XT=T_0X$, $YT_0=TY$, and $XY=g(T_0)$. This result allows us to 
 construct  polynomially bounded 
operators  that are not similar to  contractions and 
are quasisimilar to cyclic absolutely continuous isometries: to the 
unilateral shift, to the bilateral shift, to 
absolutely continuous cyclic reductive unitary operators (Section 2).
In particular, we provide a negative answer to Question 18 from [K]: 
``if the  polynomially bounded 
operator $T$ is quasisimilar to a unitary operator, is 
$T$ similar to a contraction?". (Recall that if the polynomially 
bounded operator 
$T$ is a quasiaffine transform of a {\it singular} unitary operator, 
then $T$ is similar to this unitary operator, 
see [M], [AT], or [K, Theorem 17].)

The main construction is based on a perturbation of the sequence
 $\{R_N\}_N$ of
finite dimensional operator $R_N$ such that 
\begin{equation*}\sup_N M_{pb}(R_N)<\infty  \ \text{ and }\   
 \sup_N M_{cpb}(R_N)=\infty,\end{equation*}
  from [DP] and [Pi], see \eqref{58} below.

\smallskip

We need the following notation and definition.

$\mathcal A(\mathbb  D)$ is the {\it disk algebra}, 
and $H^\infty$ is the Banach algebra 
of all analytic bounded functions in $\mathbb  D$. 
The uniform norm on $\mathbb  D$
is denoted by the symbol $\|\cdot\|_\infty$. 

For $w\in\mathbb  D$ put 
 \begin{equation} \label{13} \beta_w(z) = \frac{w-z}{1-\overline w z}, \ \ \ z\in\mathbb  D. \end{equation} 
Clearly, $\beta_w\circ\beta_w(z) = z$ for every $z\in\mathbb  D$,
and for every $\varphi\in H^\infty$ 
we have $\varphi\circ\beta_w\in H^\infty$, 
$\|\varphi\circ\beta_w\|_\infty = \|\varphi\|_\infty$, 
 and 
 if $\varphi\in \mathcal A(\mathbb  D)$, 
then $\varphi\circ\beta_w\in \mathcal A(\mathbb  D)$.

Since polynomials are dense in $\mathcal A(\mathbb  D)$ in the uniform norm,
polynomials in \eqref{11} and \eqref{12} can be replaced by functions 
from  $\mathcal A(\mathbb  D)$.
Let $\sigma(T)\subset\overline{\mathbb  D}$, and let $w\in\mathbb  D$. Then 
the operator $\beta_w(T) = (w-T)(I-\overline w T)^{-1}$
is well defined, $p(\beta_w(T)) = (p\circ\beta_w)(T)$, 
and it is easy to see that
 \begin{equation} \label{14} M_{pb}(\beta_w(T)) = M_{pb}(T) \text{ \ and \ } 
M_{cpb}(\beta_w(T)) = M_{cpb}(T).  \end{equation} 

\smallskip

The following proposition is simple, but important in our construction.

\begin{proposition}\label{prop11}  Let $\{T_N\}_N$ be a family of operators such
that $\displaystyle{\sup_N\|T_N\|<\infty}$. Put $T = \oplus_N T_N$. Then 
$\displaystyle{M_{pb}(T)=\sup_N M_{pb}(T_N)}$ and  
$\displaystyle{M_{cpb}(T)=\sup_N M_{cpb}(T_N)}$.\end{proposition}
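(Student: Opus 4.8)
The plan is to prove the two identities by showing the inequality in each direction. The direction $\sup_N M_{pb}(T_N)\le M_{pb}(T)$ (and likewise for $M_{cpb}$) is immediate: each $T_N$ is unitarily equivalent to the restriction of $T=\oplus_N T_N$ to a reducing subspace, so $\|p(T_N)\|\le\|p(T)\|$ and, for matrices of polynomials, $\|[p_{ij}(T_N)]\|\le\|[p_{ij}(T)]\|$, because compressing to a reducing subspace does not increase the norm. Hence $M_{pb}(T_N)\le M_{pb}(T)$ and $M_{cpb}(T_N)\le M_{cpb}(T)$ for every $N$, and one takes the supremum.

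For the reverse direction, first note that $\sup_N\|T_N\|<\infty$ guarantees $T$ is a bounded operator, so $p(T)$ and $[p_{ij}(T)]$ make sense. The key observation is that for any polynomial $p$, the operator $p(T)=\oplus_N p(T_N)$ acts diagonally with respect to the decomposition $\mathcal H=\oplus_N\mathcal H_N$, so $\|p(T)\|=\sup_N\|p(T_N)\|$. Therefore
\begin{equation*}
\|p(T)\|=\sup_N\|p(T_N)\|\le\sup_N M_{pb}(T_N)\,\|p\|_\infty,
\end{equation*}
which gives $M_{pb}(T)\le\sup_N M_{pb}(T_N)$. For the completely polynomially bounded case, one uses that the ampliation $[p_{ij}(T)]_{i,j=1}^n$ acting on $\oplus_{j=1}^n\mathcal H=\oplus_{j=1}^n\oplus_N\mathcal H_N$ can, after a permutation of the summands, be identified with $\oplus_N[p_{ij}(T_N)]_{i,j=1}^n$; since this is again a block-diagonal operator indexed by $N$, its norm equals $\sup_N\|[p_{ij}(T_N)]_{i,j=1}^n\|$. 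Then
\begin{equation*}
\bigl\|[p_{ij}(T)]_{i,j=1}^n\bigr\|=\sup_N\bigl\|[p_{ij}(T_N)]_{i,j=1}^n\bigr\|\le\sup_N M_{cpb}(T_N)\,\bigl\|[p_{ij}]_{i,j=1}^n\bigr\|_{H^\infty(\ell_n^2)},
\end{equation*}
and taking the supremum over all matrices of polynomials and all $n$ yields $M_{cpb}(T)\le\sup_N M_{cpb}(T_N)$.

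The only point requiring a little care is the bookkeeping in the completely polynomially bounded case: one must check that the natural unitary permuting $\oplus_{j=1}^n\oplus_N\mathcal H_N$ onto $\oplus_N\oplus_{j=1}^n\mathcal H_N$ intertwines $[p_{ij}(T)]_{i,j=1}^n$ with $\oplus_N[p_{ij}(T_N)]_{i,j=1}^n$, which is a routine verification on elementary tensors. I do not expect any genuine obstacle; the proposition is essentially the statement that taking a polynomial (or a matrix of polynomials) commutes with orthogonal direct sums, together with the fact that the norm of a block-diagonal operator is the supremum of the norms of its blocks.
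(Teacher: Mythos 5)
Your proposal is correct and follows essentially the same route as the paper: both rest on the identities $\|p(T)\|=\sup_N\|p(T_N)\|$ and, after permuting $\oplus_{j=1}^n\oplus_N\mathcal H_N$ into $\oplus_N\oplus_{j=1}^n\mathcal H_N$, $\|[p_{ij}(T)]_{i,j=1}^n\|=\sup_N\|[p_{ij}(T_N)]_{i,j=1}^n\|$. The paper simply reads both inequalities off these equalities at once, whereas you separate the two directions explicitly; the content is identical.
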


\begin{proof} Since $\|T\|=\sup_N\|T_N\|$ and $p(T) = \oplus_N p(T_N)$ for
every polynomial $p$, we have that $\|p(T)\| = \sup_N \|p(T_N)\|$.
Therefore, the equality for $M_{pb}$ is fulfilled.
Denote by $\mathcal H_N$ the spaces on which $T_N$ act.
Let $[p_{ij}]_{i,j=1}^n$ be a family of polynomials.
The operator $R =[p_{ij}(T)]_{i,j=1}^n$ acts on the space 
$\mathcal K = \oplus_{j=1}^n \oplus_N\mathcal H_N$. Rewriting this space as
$\mathcal K = \oplus_N\oplus_{j=1}^n \mathcal H_N$ we obtain that
$R = \oplus_N [p_{ij}(T_N)]_{i,j=1}^n$. Therefore, 
$\| [p_{ij}(T)]_{i,j=1}^n\| = \sup_N \| [p_{ij}(T_N)]_{i,j=1}^n\|$,
and we conclude that the equality for $M_{cpb}$ is fulfilled. \end{proof}

\begin{corollary}\label{cor12} Let $\{T_N\}_N$ be a family of operators such
that $\sigma(T_N)\subset\overline{\mathbb  D}$ for every index $N$, and let 
$\{w_N\}_N\subset\mathbb  D$. Suppose that $\sup_N\|\beta_{w_N}(T_N)\|<\infty$,
where $\beta_{w_N}$ are defined in \eqref{13}. Put $T=\oplus_N \beta_{w_N}(T_N)$.
Then $M_{pb}(T)=\sup_N M_{pb}(T_N)$ and  $M_{cpb}(T)=\sup_N M_{cpb}(T_N)$.
\end{corollary}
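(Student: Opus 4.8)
The plan is to reduce Corollary~\ref{cor12} directly to Proposition~\ref{prop11} via the conformal maps $\beta_{w_N}$. The key observation is that $\beta_{w_N}$ is an involution, so if we set $S_N = \beta_{w_N}(T_N)$, then $\beta_{w_N}(S_N) = \beta_{w_N}(\beta_{w_N}(T_N)) = T_N$. The hypothesis $\sup_N\|S_N\| = \sup_N\|\beta_{w_N}(T_N)\| < \infty$ is exactly the hypothesis needed to apply Proposition~\ref{prop11} to the family $\{S_N\}_N$, whose direct sum is precisely the operator $T = \oplus_N S_N$ under consideration.

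First I would check that the spectral condition needed to make sense of everything is in place. Since $\sigma(T_N) \subset \overline{\mathbb D}$ and $\sup_N\|\beta_{w_N}(T_N)\| < \infty$, each $\beta_{w_N}(T_N) = (w_N - T_N)(I - \overline{w_N}T_N)^{-1}$ is a well-defined bounded operator (the inverse exists because $1/\overline{w_N} \notin \overline{\mathbb D}$ when $w_N \in \mathbb D$, so $I - \overline{w_N}T_N$ is invertible). Moreover $\sigma(S_N) = \beta_{w_N}(\sigma(T_N)) \subset \overline{\mathbb D}$, so the relation \eqref{14} applies to $S_N$ as well, giving $M_{pb}(\beta_{w_N}(S_N)) = M_{pb}(S_N)$ and likewise for $M_{cpb}$.

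Then the computation is immediate: applying Proposition~\ref{prop11} to $\{S_N\}_N$ yields
$$M_{pb}(T) = \sup_N M_{pb}(S_N) \quad\text{and}\quad M_{cpb}(T) = \sup_N M_{cpb}(S_N).$$
By \eqref{14} applied to each $S_N$ (using $\beta_{w_N}(S_N) = T_N$), we have $M_{pb}(S_N) = M_{pb}(T_N)$ and $M_{cpb}(S_N) = M_{cpb}(T_N)$ for every $N$. Substituting gives the desired equalities $M_{pb}(T) = \sup_N M_{pb}(T_N)$ and $M_{cpb}(T) = \sup_N M_{cpb}(T_N)$.

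There is no serious obstacle here; the only point requiring a word of care is the justification that $\beta_{w_N}\circ\beta_{w_N} = \mathrm{id}$ transfers to the operator level, i.e.\ that $\beta_{w_N}(\beta_{w_N}(T_N)) = T_N$ as operators. This follows from the composition rule $p(\beta_w(T)) = (p\circ\beta_w)(T)$ for polynomials, extended to $\mathcal A(\mathbb D)$ by density, together with the fact (already noted in the excerpt) that $\beta_w\circ\beta_w(z) = z$; alternatively it is a direct algebraic manipulation with the Möbius transformation formula. Once this is in hand, the corollary is a one-line consequence of Proposition~\ref{prop11} and \eqref{14}.
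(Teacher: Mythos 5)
Your proof is correct and follows essentially the same route as the paper, which simply combines Proposition \ref{prop11} (applied to the family $\{\beta_{w_N}(T_N)\}_N$) with the equalities \eqref{14}. The only difference is cosmetic: you invoke \eqref{14} via the involution $\beta_{w_N}(\beta_{w_N}(T_N))=T_N$, whereas \eqref{14} applied directly to $T_N$ with $w=w_N$ already gives $M_{pb}(\beta_{w_N}(T_N))=M_{pb}(T_N)$ and $M_{cpb}(\beta_{w_N}(T_N))=M_{cpb}(T_N)$, so the detour through the operator-level identity $\beta_{w_N}\circ\beta_{w_N}=\mathrm{id}$ is not needed.
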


\begin{proof} The corollary follows from Proposition \ref{prop11} and the equalities 
\eqref{14}. \end{proof}

The paper is organized as follows. In Section 2, it is shown how to construct 
operators quasisimilar to isometries and having given compressions on their 
semi-invariant subspaces (not for arbitrary given compressions). 
Sections 3 and 4 contain preliminary results concerning functions and operators, 
respectively. Most of them, excepting maybe Theorem \ref{thm38}, are simple or known, 
and are given to achieve a complete exposition.
In Section 5 the sequence of finite dimensional operators from [DP], [Pi] is
described, because the knowledge of the structure of these operators is needed to construct 
an appropriate perturbation of them. In Section 6 the perturbation of 
the sequence of finite dimensional operators described in Section 5 is constructed. 
Then this perturbation is used to construct a cyclic polynomially 
bounded operator 
which is not similar to a contraction (Corollary \ref{cor64}). This operator is 
quasisimilar 
to a $C_0$-contraction, but the author doesn't know if the product of 
intertwining quasiaffinities
is an outer function of this $C_0$-contraction. In Section 7 an additional 
construction is given that guarantees the existence of such a function. 
It allows to apply the results of Section 2 to a 
$C_0$-operator constructed in Section 7.

In the rest part of Introduction, the needed notation and definitions are given, and 
Lemma \ref{lem13} 
is formulated and proved.  

Let $\mathcal H$ be a (complex, separable) Hilbert space, and let $\mathcal M$ be a
(linear, closed)  subspace.
By $I_{\mathcal H}$ and $P_{\mathcal M}$ the identical operator on $\mathcal H$ and 
the orthogonal projection from $\mathcal H$ onto $\mathcal M$ are denoted, respectively.  

Let $T$ and $R$ be operators on Hilbert spaces $\mathcal H$ and $\mathcal K$, 
respectively, and let
$X:\mathcal H\to\mathcal K$ be a linear bounded transformation such that 
$X$ {\it intertwines} $T$ and $R$,
that is, $XT=RX$. If $X$ is unitary, then $T$ and $R$ 
are called {\it unitarily equivalent}, in notation:
$T\cong R$. If $X$ is invertible, that is, $X$ has a {\it bounded} 
inverse $X^{-1}$, 
then $T$ and $R$ 
are called {\it similar}, in notation: $T\approx R$.
If $X$ is a {\it quasiaffinity}, that is, $\ker X=\{0\}$ and 
$\operatorname{clos}X\mathcal H=\mathcal K$, then
$T$ is called a {\it quasiaffine transform} of $R$, 
in notation: $T\prec R$. If $T\prec R$ and 
$R\prec T$,
 then $T$ and $R$ are called {\it quasisimilar}, 
in notation: $T\sim R$. 

The {\it multiplicity} $\mu_T$ of an operator $T$ acting 
on a space $\mathcal H$ is the minimum dimension
of its reproducing subspaces: 
$$  \mu_T=\min\{\dim E: E\subset \mathcal H, \ \ 
\bigvee_{n=0}^\infty T^n E=\mathcal H \} .$$
An operator $T$ is called {\it cyclic}, if $\mu_T=1$. 
Let  $\mathcal M$ be an invariant subspace of $T$, that is, $\mathcal M$
is a subspace of $\mathcal H$ such that $T\mathcal M\subset\mathcal M$.
It is well known and easy to see that 
 \begin{equation} \label{15}  \mu_{P_{\mathcal M^\perp}T|_{\mathcal M^\perp}}\leq\mu_T\leq
  \mu_{T|_{\mathcal M}} + \mu_{P_{\mathcal M^\perp}T|_{\mathcal M^\perp}}  \end{equation} 
and  \begin{equation} \label{16} \mu_T\geq \dim\ker T^\ast \end{equation} 
(see, for example, [Ni, II.D.2.3.1]).
It  follows immediately from the definition of quasiaffine transform that if $T\prec R$,
then $\mu_R\leq \mu_T$.

$\mathbb  D$ is the open unit disk, 
$\mathbb  T$ is the unit circle, $H^2$ is 
the Hardy space on $\mathbb  D$, $H^\infty$ is the Banach algebra of all bounded 
analytic functions in $\mathbb  D$.
The unilateral shift $S$ of multiplicity $1$ is the operator  
 of multiplication by the independent variable on $H^2$.
For an inner function  $\theta\in H^\infty$ the subspace $\theta H^2$
is invariant for $S$, put 
 \begin{equation} \label{17} \mathcal K_\theta=H^2\ominus\theta H^2 \ \ \text{ and } \ \ 
T_\theta = P_{\mathcal K_\theta}S|_{\mathcal K_\theta}.  \end{equation} 

If $T$ is a polynomially bounded operator, then $T=T_a\dotplus T_s$,
where $T_a$ is an absolutely continuous polynomially bounded operator, 
that is, a $H^\infty$-functional calculus is well-defined for $T_a$, and 
$T_s$ is similar to a singular unitary operator, see [M] or [K]. 
In this paper, absolutely continuous polynomially bounded operators 
are regarded.
An absolutely continuous polynomially bounded operator $T$ is called
a {\it $C_0$-operator}, if there exists $\varphi\in H^\infty$ such that 
$\varphi(T)=\mathbb  O$, see [BP]; if a $C_0$-operator is a contraction, it
is called a {\it $C_0$-contraction}, see [SFBK]. For an inner function $\theta$, 
$T_\theta$ is a $C_0$-contraction, because $\theta(T_\theta)=\mathbb  O$.   

For a cardinal number $n$, $0\leq n\leq\infty$, $H^2_n$ 
is the  orthogonal sum of $n$ copies of  $H^2$, and the
 unilateral shift $S_n$ is the operator 
 of multiplication by the independent variable on $H^2_n$.
It is well known and easy to see that $S_n\cong\oplus_{l=1}^n S$ and 
$\mu_{S_n}=n$.

\begin{lemma}\label{lem13} Let $1\leq n,k\leq\infty$ be cardinal numbers, and 
let $\Gamma\colon H^2_n\to H^2_k$ be an operator such that 
$S_k^\ast\Gamma = \Gamma S_n$ (that is, $\Gamma$ is a Hankel operator).
Put $$T=\begin{pmatrix} S_k^\ast & \Gamma \\  \mathbb  O & S_n\end{pmatrix},$$
 $T$ acts on $ H^2_k\oplus H^2_n$.
Suppose $T$ is polynomially bounded, and $\mathcal M$ is an invariant subspace
for $T$ such that $\mu_{T|_{\mathcal M}}<\infty$. Then $T|_{\mathcal M}$ is similar to
a contraction.\end{lemma}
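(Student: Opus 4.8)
The plan is to enlarge the invariant subspace $\mathcal M$ to a larger one on which $T$ restricts to a genuine Foguel--Hankel operator with one \emph{finite} parameter, to invoke the dichotomy of [DP] there, and then to descend back to $\mathcal M$. The only serious input is that cited dichotomy; everything else is bookkeeping.

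First I would push $\mathcal M$ down to the second coordinate. Let $P$ be the orthogonal projection of $H^2_k\oplus H^2_n$ onto $H^2_n$; since $PT=S_nP$, the space $\mathcal L:=\operatorname{clos}P\mathcal M$ is invariant for $S_n$, and $\mathcal M\subseteq H^2_k\oplus\mathcal L$ (if $(\xi,\eta)\in\mathcal M$ then $\eta=P(\xi,\eta)\in\mathcal L$). The key point is that $n':=\mu_{S_n|_{\mathcal L}}<\infty$. Indeed, pick a reproducing subspace $E\subseteq\mathcal M$ for $T|_{\mathcal M}$ with $\dim E=\mu_{T|_{\mathcal M}}<\infty$; applying the bounded operator $P$ and using $P(T|_{\mathcal M})^j=S_n^jP$ on $\mathcal M$ one gets $\bigvee_{j\ge0}S_n^j(PE)=\mathcal L$, so $PE$ is a reproducing subspace for $S_n|_{\mathcal L}$ and $n'\le\dim PE\le\dim E=\mu_{T|_{\mathcal M}}<\infty$.

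Next I would normalize $\mathcal L$. By the Beurling--Lax--Halmos theorem there are an inner function $\Theta\in H^\infty(\mathcal L(\ell_{n'}^2,\ell_n^2))$ with $\mathcal L=\Theta H^2_{n'}$ and a unitary $U\colon H^2_{n'}\to\mathcal L$, $Uf=\Theta f$, satisfying $S_nU=US_{n'}$. Put $\widetilde{\mathcal M}:=H^2_k\oplus\mathcal L$. Since $S_n\mathcal L\subseteq\mathcal L$, $\widetilde{\mathcal M}$ is invariant for $T$, and with respect to the decomposition $H^2_k\oplus\mathcal L$ one has $T|_{\widetilde{\mathcal M}}=\begin{pmatrix}S_k^\ast&\Gamma|_{\mathcal L}\\ \mathbb O&S_n|_{\mathcal L}\end{pmatrix}$. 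Conjugating by the unitary $I_{H^2_k}\oplus U$ gives $T|_{\widetilde{\mathcal M}}\cong\widetilde T:=\begin{pmatrix}S_k^\ast&\Gamma U\\ \mathbb O&S_{n'}\end{pmatrix}$ on $H^2_k\oplus H^2_{n'}$, and $\Gamma U$ is again a Hankel operator because $S_k^\ast(\Gamma U)=(S_k^\ast\Gamma)U=(\Gamma S_n)U=\Gamma(S_nU)=(\Gamma U)S_{n'}$. (If $n'=0$, then $\widetilde T=S_k^\ast$ is a contraction and the last step below is vacuous.)

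Finally I would come back down to $\mathcal M$. Restriction to an invariant subspace and unitary equivalence both preserve polynomial boundedness, so $\widetilde T$ is polynomially bounded; since $n'<\infty$, [DP, Theorem 4.4 and Corollary 4.5] shows that $\widetilde T$, hence $T|_{\widetilde{\mathcal M}}$, is similar to a contraction, say $T|_{\widetilde{\mathcal M}}=X^{-1}CX$ with $\|C\|\le1$ and $X$ invertible. As $\mathcal M$ is $T$-invariant and contained in $\widetilde{\mathcal M}$, it is invariant for $T|_{\widetilde{\mathcal M}}$; then $X\mathcal M$ is invariant for $C$ and $X|_{\mathcal M}$ implements a similarity of $T|_{\mathcal M}$ with the contraction $C|_{X\mathcal M}$, which proves the lemma. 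I expect the main obstacle to be conceptual rather than technical: realizing that one should enlarge $\mathcal M$ to $H^2_k\oplus\mathcal L$ instead of working with $\mathcal M$ directly, and establishing the finiteness $n'<\infty$ so that the cited Foguel--Hankel dichotomy becomes applicable; the block-matrix computation and the verification that $\Gamma U$ stays Hankel are the places most prone to slips but are otherwise routine.
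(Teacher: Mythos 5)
Your proposal is correct and is essentially the paper's own argument: the paper forms $\mathcal M_1=(H^2_k\oplus\{0\})\vee\mathcal M$, which coincides with your $H^2_k\oplus\mathcal L$, identifies the compression to $\mathcal N=\mathcal M_1\ominus(H^2_k\oplus\{0\})$ with a finite-multiplicity restriction of $S_n$, and invokes [DP, Theorem 4.4] before restricting back to $\mathcal M$. The only cosmetic difference is how finiteness of the shift multiplicity is obtained (you project a reproducing subspace directly, the paper uses the multiplicity inequalities \eqref{15}); both are valid.
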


\begin{proof} Put $\mathcal M_1 = (H^2_k\oplus\{0\}) \vee \mathcal M$ and 
$T_1=T|_{\mathcal M_1}$. It follows from the definition of the multiplicity that
 $\mu_{T_1}\leq \mu_{T|_{H^2_k\oplus\{0\}}} + \mu_{T|_{\mathcal M}}$.
Since $T|_{H^2_k\oplus\{0\}}=S_k^\ast$ and $\mu_{S_k^\ast}=1$
for $1\leq k\leq\infty$ (see, for example, [Ni, Lemma II.D.2.4.20]),
we obtain that $\mu_{T_1}<\infty$. 
Put $\mathcal N =  \mathcal M_1\ominus(H^2_k\oplus\{0\})$.
Then $\mathcal N\subset \{0\}\oplus H^2_n$, $S_n\mathcal N\subset\mathcal N$, and
$P_{\mathcal N}T_1|_{\mathcal N} =  S_n|_{\mathcal N}$. 
Put $\Gamma_1=\Gamma|_{\mathcal N}$.
Then  $$T_1=\begin{pmatrix} S_k^\ast & \Gamma_1 \\  \mathbb  O & S_n|_{\mathcal N}\end{pmatrix}$$
and $S_k^\ast\Gamma_1 = \Gamma_1 S_n|_{\mathcal N}$.

By \eqref{15}, $\mu_{P_{\mathcal N}T_1|_{\mathcal N}}\leq \mu_{T_1}<\infty$. 
Therefore, there exists $n_1$, $1\leq n_1<\infty$, such that 
$S_n|_{\mathcal N}\cong S_{n_1}$. Thus, 
$$T_1 \cong \begin{pmatrix} S_k^\ast & \Gamma_2 \\  \mathbb  O & S_{n_1}\end{pmatrix},$$
where $\Gamma_2$ is such that $S_k^\ast\Gamma_2 = \Gamma_2 S_{n_1}$.
Since $n_1<\infty$, $T_1$ is similar to a contraction by [DP, Theorem 4.4]. 
Since $T|_{\mathcal M} =T_1|_{\mathcal M}$, $T|_{\mathcal M}$ is similar to a contraction, too. \end{proof}

\section{Construction of operators quasisimilar to isometries}

The following definition and notation  will be used in this section.
An operator $T$ is called {\it power bounded}, 
if $\sup_{n\geq 1}\|T^n\|<\infty$.
 For an operator $T$, let $\{T\}'$ denote the commutant of $T$,
that is, the algebra of all operators that commute with $T$.

The following lemma is very simple, therefore, its proof is omitted.

\begin{lemma}\label{lem21} Suppose $\mathcal H_0$, $\mathcal K_0$, $\mathcal H_1$, $\mathcal K_1$
are Hilbert spaces, $Y_0\colon \mathcal H_0\to \mathcal K_0$, 
$Y_1\colon \mathcal H_1\to \mathcal K_1$, $Z\colon\mathcal H_1\to \mathcal K_0$ 
are operators. Put 
$$ Y = \begin{pmatrix}  Y_0 & Z \\  \mathbb  O & Y_1 \end{pmatrix}.$$
Then

\noindent
$\text{\rm (i)}$ if $Y_0$ and $Y_1$ are quasiaffinities, then $Y$ is a quasiaffinity;

\noindent
$\text{\rm(ii)}$ if $Y$ is a quasiaffinity, then 
$\operatorname{clos}Y_1\mathcal H_1 = \mathcal K_1$;

\noindent
$\text{\rm (iii)}$ if $Y_0$ is invertible and $Y$ is a quasiaffinity, then 
$Y_1$ is a quasiaffinity.\end{lemma}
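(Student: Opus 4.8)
The plan is to verify the three assertions directly from the definitions of quasiaffinity, using only the block upper-triangular shape of $Y$ together with the two elementary identities $Y(h_0,0)=(Y_0h_0,0)$ and $P_{\mathcal K_1}Y(h_0,h_1)=Y_1h_1$. For (i), I would first settle injectivity: if $Y(h_0,h_1)=0$ then the bottom coordinate gives $Y_1h_1=0$, so $h_1=0$ because $Y_1$ is injective, and then the top coordinate gives $Y_0h_0=0$, so $h_0=0$ because $Y_0$ is injective; hence $\ker Y=\{0\}$. For the dense range, the key point is to work inside the closed subspace $\operatorname{clos}Y\mathcal H$ rather than with limits of individual coordinates. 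One has $\operatorname{clos}Y\mathcal H\supseteq\operatorname{clos}Y(\mathcal H_0\oplus\{0\})=\operatorname{clos}(Y_0\mathcal H_0)\oplus\{0\}=\mathcal K_0\oplus\{0\}$. Then for each $h_1\in\mathcal H_1$ both $Y(0,h_1)=(Zh_1,Y_1h_1)$ and $(Zh_1,0)$ lie in $\operatorname{clos}Y\mathcal H$, so their difference $(0,Y_1h_1)$ lies there as well; taking the closure over all $h_1$ yields $\{0\}\oplus\operatorname{clos}(Y_1\mathcal H_1)=\{0\}\oplus\mathcal K_1\subseteq\operatorname{clos}Y\mathcal H$. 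Adding the two pieces gives $\operatorname{clos}Y\mathcal H=\mathcal K_0\oplus\mathcal K_1$, so $Y$ is a quasiaffinity.

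For (ii), I would use that $P_{\mathcal K_1}$ is a bounded linear surjection of $\mathcal K_0\oplus\mathcal K_1$ onto $\mathcal K_1$, hence carries any dense subset onto a dense subset. Since $Y$ is a quasiaffinity, $Y\mathcal H$ is dense in $\mathcal K_0\oplus\mathcal K_1$, and $P_{\mathcal K_1}Y\mathcal H=\{Y_1h_1:h_1\in\mathcal H_1\}=Y_1\mathcal H_1$; therefore $\operatorname{clos}(Y_1\mathcal H_1)=\mathcal K_1$. For (iii), density of $Y_1\mathcal H_1$ in $\mathcal K_1$ is already given by (ii), so only injectivity of $Y_1$ remains. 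Here invertibility of $Y_0$ lets one cancel the off-diagonal term: if $Y_1h_1=0$, then $Y(-Y_0^{-1}Zh_1,\,h_1)=(Y_0(-Y_0^{-1}Zh_1)+Zh_1,\;Y_1h_1)=(0,0)$, so injectivity of $Y$ forces $h_1=0$. Hence $Y_1$ is a quasiaffinity.

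I do not expect a genuine obstacle: every step is a one-line consequence of continuity and of the triangular form. The only place that calls for a little care is the dense-range half of (i), where it is tempting but incorrect to represent a target vector as $\lim Y_0h_0^{(n)}$ directly (the perturbing terms $Zh_1^{(n)}$ need not converge); passing through the closed subspace $\operatorname{clos}Y\mathcal H$, as above, avoids this entirely.
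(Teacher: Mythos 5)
Your proof is correct; the paper omits the argument as ``very simple,'' and what you have written is exactly the standard verification it has in mind. In particular, your care in part (i) — obtaining $(0,Y_1h_1)$ as a difference of two vectors already in the closed subspace $\operatorname{clos}Y\mathcal H$ rather than trying to control the terms $Zh_1^{(n)}$ — is the right way to handle the only point where one could slip.
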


  The following proposition will be applied to construct an example of
a polynomially bounded operator, which is not similar to a contraction 
and is a quasiaffine transform of $S$. 

\begin{proposition}\label{prop22}   Suppose $\mathcal H$, $\mathcal K$, $\mathcal M$
are Hilbert spaces, $T_0\colon\mathcal H\to\mathcal H$, $R_0\colon\mathcal K\to\mathcal K$,
$V\colon\mathcal M\to\mathcal M$, $Y_0\colon\mathcal H\to\mathcal K$, 
$K\colon\mathcal K\to\mathcal M$,
$Z\colon\mathcal H\to\mathcal M$ are operators. Moreover, suppose 
$Y_0T_0 = R_0Y_0$. Put $A = VZ - ZT_0 +KY_0$,
$$T = \begin{pmatrix} V & A \\  \mathbb  O & T_0 \end{pmatrix} , \ \ \ 
R = \begin{pmatrix} V & K \\  \mathbb  O & R_0 \end{pmatrix}, \ \ \text{and} \ \ \ 
Y = \begin{pmatrix} I_{\mathcal M} & Z \\  \mathbb  O & Y_0 \end{pmatrix} .$$
Then $YT = RY$. If $R$ and $T_0$ are power bounded, then $T$ is 
power bounded. If $R$ and $T_0$ are polynomially bounded, then $T$ is 
polynomially bounded. \end{proposition}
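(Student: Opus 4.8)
The plan is to establish the three assertions in order, deriving the two boundedness statements from a single identity; the whole argument is routine $2\times 2$ operator-matrix bookkeeping, and the only point that needs a moment's thought is that $Y$ (equivalently, $Y_0$) need not be invertible.

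First I would check $YT=RY$ by multiplying out the two block products,
$$YT=\begin{pmatrix} V & A+ZT_0\\ \mathbb O & Y_0T_0\end{pmatrix}\qquad\text{and}\qquad RY=\begin{pmatrix} V & VZ+KY_0\\ \mathbb O & R_0Y_0\end{pmatrix}.$$
These coincide: the $(1,1)$ and $(2,1)$ blocks trivially, the $(2,2)$ blocks because $Y_0T_0=R_0Y_0$ by hypothesis, and the $(1,2)$ blocks because $A+ZT_0=VZ+KY_0$, which is precisely how $A$ was defined. Hence $YT=RY$, and consequently $Y\,p(T)=p(R)\,Y$ for every polynomial $p$.

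For the boundedness statements I would split off the invertible part of $Y$. Put
$$U=\begin{pmatrix} I_{\mathcal M} & Z\\ \mathbb O & I\end{pmatrix},\qquad D=\begin{pmatrix} I_{\mathcal M} & \mathbb O\\ \mathbb O & Y_0\end{pmatrix},$$
so that $Y=DU$ and $U$ is invertible with $U^{-1}$ of the same form (with $-Z$). A direct computation using the definition of $A$, followed by $Y_0T_0=R_0Y_0$, gives
$$\widehat T:=UTU^{-1}=\begin{pmatrix} V & KY_0\\ \mathbb O & T_0\end{pmatrix},\qquad D\widehat T=\begin{pmatrix} V & KY_0\\ \mathbb O & R_0Y_0\end{pmatrix}=RD,$$
so that $D\,p(\widehat T)=p(R)\,D$ for every polynomial $p$ and $D\widehat T^{\,n}=R^{n}D$ for every $n$. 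Since $\widehat T$ is block upper triangular with diagonal blocks $V,T_0$, so is $p(\widehat T)$; writing $\Psi_p$ for the off-diagonal block of $p(R)$ and comparing first block rows in $D\,p(\widehat T)=p(R)\,D$ identifies the off-diagonal block of $p(\widehat T)$ as $\Psi_p Y_0$. Thus
$$p(\widehat T)=\begin{pmatrix} p(V) & \Psi_p\\ \mathbb O & \mathbb O\end{pmatrix}\begin{pmatrix} I_{\mathcal M} & \mathbb O\\ \mathbb O & Y_0\end{pmatrix}+\begin{pmatrix} \mathbb O & \mathbb O\\ \mathbb O & p(T_0)\end{pmatrix}.$$

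To finish I would take norms. The first factor of the first summand is the first block row of $p(R)$, hence has norm at most $\|p(R)\|$; therefore $\|p(\widehat T)\|\le\max(1,\|Y_0\|)\,\|p(R)\|+\|p(T_0)\|$, and
$$\|p(T)\|=\|U^{-1}p(\widehat T)U\|\le\|U\|\,\|U^{-1}\|\bigl(\max(1,\|Y_0\|)\,\|p(R)\|+\|p(T_0)\|\bigr).$$
If $R$ and $T_0$ are polynomially bounded, the right-hand side is at most $\|U\|\,\|U^{-1}\|\bigl(\max(1,\|Y_0\|)M_{pb}(R)+M_{pb}(T_0)\bigr)\|p\|_\infty$, so $T$ is polynomially bounded; the power-bounded case is the identical estimate with $z^{n}$ in place of $p$, using $\widehat T^{\,n}$, $R^{n}$ and $T_0^{\,n}$. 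There is no genuine obstacle in any of this; the single subtlety is the possible non-invertibility of $Y$, which the factorization $Y=DU$ disposes of, since afterwards $Y_0$ enters the formula for $p(\widehat T)$ only on the right, where it is absorbed harmlessly into the norm bound.
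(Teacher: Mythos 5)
Your proof is correct and is essentially the paper's argument in a slightly repackaged form: the paper reads off the corner block $A_{(p)}=p(V)Z-Zp(T_0)+K_{(p)}Y_0$ of $p(T)$ directly from $Yp(T)=p(R)Y$ and bounds its three terms, while you first conjugate by the unipotent factor $U$ of $Y=DU$ to kill the commutator term $p(V)Z-Zp(T_0)$ and then identify the remaining corner block as $\Psi_pY_0$ from $Dp(\widehat T)=p(R)D$. Both routes rest on the same intertwining identity and produce the same kind of estimate, so the difference is cosmetic.
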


\begin{proof} The equality $YT = RY$ is a straightforward consequence of 
the definition of $T$, $R$, and $Y$ and the equality $Y_0T_0 = R_0Y_0$. 
For a polynomial $p$ set $A_{(p)} = P_{\mathcal M}p(T)|_{\mathcal H}$ and 
$K_{(p)} = P_{\mathcal M}p(R)|_{\mathcal K}$. From the equality $Yp(T) = p(R)Y$
writing in the matrix form it is easy to see that 
$$ p(T)= \begin{pmatrix} p(V) & A_{(p)} \\  \mathbb  O & p(T_0) \end{pmatrix} , 
\ \ \text{ and } \ \ A_{(p)} = p(V)Z - Zp(T_0) + K_{(p)}Y_0.$$

Now suppose that $R$ and $T_0$ are polynomially bounded. 
Since $p(V) = p(R)|_{\mathcal M}$, we have 
$\|p(V)\| \leq M_{pb}(R)\|p\|_\infty$, 
and the estimate $\|K_{(p)}\|\leq M_{pb}(R)\|p\|_\infty$ 
follows from the definition of $K_{(p)}$.
Also, $\|p(T_0)\|\leq M_{pb}(T_0)\|p\|_\infty$.
Therefore, $$\|A_{(p)}\|\leq M_{pb}(R)\|p\|_\infty \|Z\| + 
\|Z\|M_{pb}(T_0)\|p\|_\infty + M_{pb}(R)\|p\|_\infty\|Y_0\|$$ $$ = 
\bigl((M_{pb}(R) + M_{pb}(T_0))\|Z\| + M_{pb}(R)\|Y_0\|\bigr)\|p\|_\infty.$$
Since $\|p(T)\|\leq\sqrt 3 \max(\|p(V)\|, \|A_{(p)}\|, \|p(T_0)\|)$,
we obtain that 
$$\|p(T)\|\leq\sqrt 3 \max\bigl(M_{pb}(R), 
(M_{pb}(R) + M_{pb}(T_0))\|Z\| + M_{pb}(R)\|Y_0\|, M_{pb}(T_0)\bigr)\|p\|_\infty.$$
Thus, if $R$ and $T_0$ are polynomially bounded, then $T$ is 
polynomially bounded.

If we suppose only that $R$ and $T_0$ are power bounded, then 
the proof of the power boundedness of $T$ is the same. \end{proof}

\begin{corollary}\label{cor23}   Suppose $\theta\in H^\infty$ is an inner function, 
$\mathcal K_\theta = H^2\ominus\theta H^2$, 
$T_\theta = P_{\mathcal K_\theta} S|_{\mathcal K_\theta}$,
$K_\theta = P_{\theta H^2} S|_{\mathcal K_\theta}$,
$\mathcal H$ is a Hilbert space, $T_0\colon\mathcal H\to\mathcal H$, 
$Y_0\colon\mathcal H\to\mathcal K_\theta$, 
$Z\colon\mathcal H\to\theta H^2$ are operators. 
Moreover, suppose 
$Y_0T_0 = T_\theta Y_0$. Put $A = S|_{\theta H^2}Z - ZT_0 +K_\theta Y_0$,
$$T = \begin{pmatrix} S|_{\theta H^2} & A \\  \mathbb  O & T_0 \end{pmatrix}  
\ \ \ \text{and} \ \ \ 
Y = \begin{pmatrix} I_{\theta H^2} & Z \\  \mathbb  O & Y_0 \end{pmatrix} .$$
Then $YT = SY$. If $Y_0$ is a quasiaffinity, then 
$Y$ is a quasiaffinity. If $T_0$ is polynomially bounded, then $T$ is 
polynomially bounded. If $T_0$ is not similar to a contraction, 
then $T$ is not similar to a contraction.\end{corollary}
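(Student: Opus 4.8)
The plan is to obtain Corollary~\ref{cor23} as a direct specialization of Proposition~\ref{prop22}, together with an identification of the shift $S$ on $H^2$ with a $2\times 2$ upper-triangular operator matrix. First I would set, in the notation of Proposition~\ref{prop22}, $\mathcal M=\theta H^2$, $V=S|_{\theta H^2}$, $\mathcal K=\mathcal K_\theta$, $R_0=T_\theta$, $K=K_\theta$, and keep $\mathcal H$, $T_0$, $Y_0$, $Z$ as given; the hypothesis $Y_0T_0=T_\theta Y_0$ is exactly $Y_0T_0=R_0Y_0$, so the proposition applies and gives $A=VZ-ZT_0+KY_0=S|_{\theta H^2}Z-ZT_0+K_\theta Y_0$, matching the definition of $A$ in the corollary, and yields $YT=RY$ with
$$R=\begin{pmatrix} S|_{\theta H^2} & K_\theta \\ \mathbb O & T_\theta\end{pmatrix}.$$
The one genuine point to check is that this $R$ is unitarily equivalent to $S$ itself via the decomposition $H^2=\theta H^2\oplus\mathcal K_\theta$: indeed $\theta H^2$ is $S$-invariant with $S|_{\theta H^2}$ in the $(1,1)$-entry, the compression of $S$ to $\mathcal K_\theta$ is $T_\theta$ by \eqref{17}, and the $(1,2)$-entry of $S$ with respect to this decomposition is $P_{\theta H^2}S|_{\mathcal K_\theta}=K_\theta$ (the $(2,1)$-entry is $\mathbb O$ since $\theta H^2$ is invariant). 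Hence $R\cong S$, and pulling $YT=RY$ through this unitary gives $YT=SY$ (after adjusting $Y$ by the unitary, or simply identifying $H^2$ with $\theta H^2\oplus\mathcal K_\theta$ throughout, which is what the statement tacitly does).

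The remaining three assertions are then immediate. For the quasiaffinity claim: $Y=\begin{pmatrix} I_{\theta H^2} & Z\\ \mathbb O & Y_0\end{pmatrix}$, so if $Y_0$ is a quasiaffinity then by Lemma~\ref{lem21}(i) (with $Y_0$ there equal to $I_{\theta H^2}$, which is trivially a quasiaffinity, and $Y_1$ there equal to our $Y_0$) $Y$ is a quasiaffinity. For polynomial boundedness: $S$ is a contraction, hence $R\cong S$ is polynomially bounded with $M_{pb}(R)=1$; so if $T_0$ is polynomially bounded, Proposition~\ref{prop22} (polynomial-boundedness part) gives that $T$ is polynomially bounded. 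For the last claim, suppose toward a contradiction that $T$ is similar to a contraction, say $XTX^{-1}$ is a contraction with $X$ invertible. Since $\theta H^2$ is $T$-invariant (the matrix is upper triangular) and $T|_{\theta H^2}=S|_{\theta H^2}$, the restriction of the similarity to the subspace $X(\theta H^2)$ would exhibit $T_0=P_{\mathcal H}T|_{\mathcal H}$... — more carefully: from $T=\begin{pmatrix} S|_{\theta H^2} & A\\ \mathbb O & T_0\end{pmatrix}$, the space $\theta H^2\oplus\{0\}$ is invariant, and the compression $P_{\mathcal H}T|_{\mathcal H}=T_0$ acts on the semi-invariant (here simply quotient) subspace; a compression of a contraction to a semi-invariant subspace is a contraction, and similarity is inherited by compressions to semi-invariant subspaces, so $T$ similar to a contraction would force $T_0$ similar to a contraction, contradicting the hypothesis.

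The main obstacle, such as it is, is purely bookkeeping: verifying the operator-matrix identity $S\cong\begin{pmatrix} S|_{\theta H^2} & K_\theta\\ \mathbb O & T_\theta\end{pmatrix}$ with respect to $H^2=\theta H^2\oplus\mathcal K_\theta$, and being careful that the conclusion ``$YT=SY$'' in the corollary silently uses this identification rather than an honest operator into $H^2$; one should either phrase $Y$ as mapping into $\theta H^2\oplus\mathcal K_\theta\cong H^2$ or compose with the unitary. Everything else is a literal substitution into Proposition~\ref{prop22} and one invocation each of Lemma~\ref{lem21}(i) and the elementary fact that compressions of contractions to semi-invariant subspaces are contractions (and that this behavior is compatible with similarity).
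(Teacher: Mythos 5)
Your proposal is correct and follows essentially the same route as the paper: specialize Proposition~\ref{prop22} with $\mathcal M=\theta H^2$, $V=S|_{\theta H^2}$, $\mathcal K=\mathcal K_\theta$, $K=K_\theta$, $R_0=T_\theta$ (so that $R=S$ under the decomposition $H^2=\theta H^2\oplus\mathcal K_\theta$), invoke Lemma~\ref{lem21}(i) for the quasiaffinity claim, and use the fact that similarity to a contraction passes to compressions on semi-invariant subspaces. The only difference is that you spell out the identification $S\cong\begin{pmatrix} S|_{\theta H^2} & K_\theta\\ \mathbb O & T_\theta\end{pmatrix}$, which the paper states as the bare equality $R=S$.
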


\begin{proof} Put $\mathcal M = \theta H^2$, $V = S|_{\theta H^2}$, 
$\mathcal K = \mathcal K_\theta$, $K = K_\theta$, $R_0=T_\theta$,
and apply Proposition \ref{prop22}. Then $R=S$, therefore, $YT = SY$.
By Proposition \ref{prop22}, the polynomially boundedness of $T_0$
implies the polynomially boundedness of  $T$. 
If $Y_0$ is a quasiaffinity,  then  $Y$ is a quasiaffinity
by Lemma \ref{lem21} (i).
If $T$ is similar to a contraction, the same holds for 
the compression of $T$ to any  semi-invariant subspace,
in particular, for $T_0$. Therefore, if  $T_0$ is not similar to a contraction, 
then $T$ is not similar to a contraction. \end{proof}

\begin{proposition}\label{prop24} Suppose $\theta\in H^\infty$ is an inner function, 
$T$ is the operator from Corollary \ref{cor23},
and $Y_0$ from Corollary  \ref{cor23} is a quasiaffinity. Then $T\sim S$ if and only if there exist  
a quasiaffinity $X_0\colon\mathcal K_\theta\to\mathcal H$ and an outer function
$g\in H^\infty$ such that $T_0X_0 = X_0T_\theta$ and $Y_0X_0 = g(T_\theta)$.\end{proposition}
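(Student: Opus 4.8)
The plan is to establish the two implications separately, the bridge in each direction being the factorization $YW=g(S)$, where $g(S)$ denotes multiplication by $g$ on $H^2$ and $W$ is a quasiaffinity intertwining $S$ and $T$. I will use throughout that, by Corollary~\ref{cor23}, $Y$ is a quasiaffinity with $YT=SY$; hence $T\prec S$ holds under the standing hypothesis, and $T\sim S$ amounts to $S\prec T$.

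Suppose first that $T\sim S$, and fix a quasiaffinity $W\colon H^2\to\theta H^2\oplus\mathcal H$ with $WS=TW$. A composition of quasiaffinities is again a quasiaffinity, so $YW\colon H^2\to H^2$ is a quasiaffinity, and $YWS=YTW=SYW$, i.e.\ $YW$ commutes with $S$. Since the commutant of $S$ is $\{g(S):g\in H^\infty\}$, we get $YW=g(S)$ for some $g\in H^\infty$, and $g$ is outer because $g(S)=YW$ has dense range. Now decompose $W=\begin{pmatrix}W_{11}&W_{12}\\W_{21}&W_{22}\end{pmatrix}$ with respect to $H^2=\theta H^2\oplus\mathcal K_\theta$ and $\theta H^2\oplus\mathcal H$. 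Since $g(S)$ leaves $\theta H^2$ invariant and its compression to $\mathcal K_\theta$ is $g(T_\theta)$, while $Y=\begin{pmatrix}I_{\theta H^2}&Z\\\mathbb O&Y_0\end{pmatrix}$, the $(2,1)$- and $(2,2)$-entries of $YW=g(S)$ read $Y_0W_{21}=\mathbb O$ and $Y_0W_{22}=g(T_\theta)$; as $Y_0$ is injective, $W_{21}=\mathbb O$, and then the $(2,2)$-entry of $WS=TW$ gives $W_{22}T_\theta=T_0W_{22}$. I would then take $X_0:=W_{22}$. It intertwines as required and satisfies $Y_0X_0=g(T_\theta)$; it is injective because $g(T_\theta)=Y_0X_0$ is (if $g(T_\theta)h=0$ then $gh\in\theta H^2$, say $gh=\theta u$ with $u\in H^2$, and since $g$ is outer $u/g\in N^+$ has $L^2$ boundary modulus $|u/g|=|h|$, hence $u/g\in H^2$ and $h=\theta(u/g)\in\theta H^2\cap\mathcal K_\theta=\{0\}$); and it has dense range by Lemma~\ref{lem21}(ii) applied to the block upper triangular quasiaffinity $W$. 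Thus $X_0$ is a quasiaffinity of the desired kind.

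Conversely, given a quasiaffinity $X_0$ and an outer $g\in H^\infty$ with $T_0X_0=X_0T_\theta$ and $Y_0X_0=g(T_\theta)$, I would define
\[W:=\begin{pmatrix}g(S)|_{\theta H^2}&P_{\theta H^2}g(S)|_{\mathcal K_\theta}-ZX_0\\\mathbb O&X_0\end{pmatrix}\colon\theta H^2\oplus\mathcal K_\theta\longrightarrow\theta H^2\oplus\mathcal H.\]
A short block computation using $Y_0X_0=g(T_\theta)$ gives $YW=g(S)$. Since $g(S)$ commutes with $S$ and $YT=SY$, it follows that $Y(WS-TW)=g(S)S-Sg(S)=\mathbb O$, and injectivity of $Y$ forces $WS=TW$. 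Finally, $W$ is a quasiaffinity by Lemma~\ref{lem21}(i): $X_0$ is one by hypothesis, and $g(S)|_{\theta H^2}$, i.e.\ multiplication by $g$ on $\theta H^2$, is one because $g$ is outer (it is injective, and its range $\theta(gH^2)$ is dense in $\theta H^2$ since $\overline{gH^2}=H^2$). Hence $S\prec T$, and together with $T\prec S$ this gives $T\sim S$.

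The genuinely non-mechanical inputs are the two function-theoretic facts used in the first implication — that $\{S\}'=\{g(S):g\in H^\infty\}$ and that $g(S)$ has dense range exactly when $g$ is outer, which together produce $g$ and its outerness — and the small observation that $g(T_\theta)$ is injective for outer $g$, needed so that $X_0$ is injective; I expect the latter injectivity verification, together with guessing the correct corner $-ZX_0$ in the definition of $W$ in the converse so that $YW=g(S)$ exactly, to be the only delicate points. Everything else reduces to multiplying out $2\times2$ operator matrices and invoking the two parts of Lemma~\ref{lem21}.
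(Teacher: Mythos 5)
Your proof is correct and follows essentially the same route as the paper: the commutant of $S$ and the dense range of $YW$ produce the outer $g$ in one direction, and in the other direction you build the same block operator (with the same $-ZX_0$ correction term) that the paper uses. The only (harmless) variations are that you verify the intertwining $WS=TW$ in the converse by first checking $YW=g(S)$ and then invoking injectivity of $Y$, instead of checking the matrix identity directly, and that you spell out the injectivity of $g(T_\theta)$, which the paper takes as known.
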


\begin{proof} {\it The ``if" part.} Put 
$K_{(g)} = P_{\theta H^2}g(S)|_{\mathcal K_\theta}$, 
$W = K_{(g)} - ZX_0$, and 
$$ X = \begin{pmatrix} g(S)|_{\theta H^2} & W \\  \mathbb  O & X_0 \end{pmatrix} .$$ 
Since $g$ is outer, $g(S)|_{\theta H^2}$ is a quasiaffinity, and  
$X$ is a quasiaffinity by Lemma  \ref{lem21}  (i). 
It remains to prove that $XS = TX$.
Writing this equality in matrix form, it is easy to see
that it is sufficient to prove the equality
 \begin{equation} \label{21} g(S)|_{\theta H^2}K_\theta + W T_\theta = S|_{\theta H^2}W + A X_0. \end{equation} 
Using the definition of $W$ and $A$ we obtain that 
$$g(S)|_{\theta H^2}K_\theta + W T_\theta = 
g(S)|_{\theta H^2}K_\theta + K_{(g)}T_\theta - ZX_0T_\theta,$$
and $$S|_{\theta H^2}W + A X_0 = 
S|_{\theta H^2}K_{(g)} - S|_{\theta H^2}ZX_0 +
S|_{\theta H^2}ZX_0 - ZT_0X_0 + K_\theta Y_0X_0$$
$$ = 
S|_{\theta H^2}K_{(g)} - ZT_0X_0 + K_\theta g(T_\theta).$$
Using the equality $T_0X_0 = X_0T_\theta$, we infer that \eqref{21}
is equivalent to the equality 
$$g(S)|_{\theta H^2}K_\theta + K_{(g)}T_\theta = 
S|_{\theta H^2}K_{(g)} + K_\theta g(T_\theta).$$
But this equality follows from the equality $g(S)S =Sg(S)$ 
written in the matrix form.

{\it The ``only if" part.} Suppose that 
a quasiaffinity $Y$ is from Corollary \ref{cor23}, and
$X\colon H^2 \to \theta H^2\oplus \mathcal H$
is a quasiaffinity such that $XS = TX$.
Since $YX\in\{S\}'$, there exists a function $g\in H^\infty$ 
such that  $YX = g(S)$, and, since $YX$ is a quasiaffinity,
$g$ is outer. Writing 
$X = \begin{pmatrix} X_1 & W \\  X_2  & X_0 \end{pmatrix}$  
with respect to the decompositions of the spaces
$H^2 = \theta H^2 \oplus \mathcal K_\theta$ and 
$\theta H^2\oplus \mathcal H$ and regarding the equality
$YX = g(S)$ with respect to these decompositions,
 we obtain that $Y_0X_0 = g(T_\theta)$ and
$Y_0X_2 = \mathbb  O$. Since $\ker Y_0=\{0\}$, we conclude that
$X_2 = \mathbb  O$. 
Thus, $X = \begin{pmatrix} X_1 & W \\  \mathbb  O  & X_0 \end{pmatrix}$.
By Lemma  \ref{lem21}  (ii), $\operatorname{clos}X_0\mathcal K_\theta =\mathcal H$.
Since $\ker g(T_\theta)=\{0\}$, from the equality 
$Y_0X_0 = g(T_\theta)$ we conclude that $\ker X_0=\{0\}$.
From the equality $XS=TX$ we conclude that 
$X_0 T_\theta = T_0X_0$. \end{proof}

\begin{remark}\label{rem25} The conditions $T_0X_0 = X_0T_\theta$ and 
$Y_0T_0 = T_\theta Y_0$ guarantee that $Y_0X_0 \in\{T_\theta\}'$, 
consequently, there exists a function $\varphi\in H^\infty$ such that
 $Y_0X_0 = \varphi(T_\theta)$. The condition that 
 $X_0$ and  $Y_0$ are quasiaffinities guarantees that 
the inner factor of $\varphi$ is coprime with $\theta$. 
For every $f\in H^\infty$ such that the inner factor of $f$ is coprime with $\theta$
one can regard $f(T_\theta) Y_0$ instead of $Y_0$, because $f(T_\theta) Y_0$ 
is a quasiaffinity which intertwines $T_\theta$ with $T_0$. 
Also, for every $h\in H^\infty$ the equality 
$(f\varphi)(T_\theta) = (\varphi f  + \theta h)(T_\theta)$ holds. 
But there exist functions  $\varphi$ and $\theta$ 
such that $\varphi$ is coprime with $\theta$ and the function
$\varphi f + \theta h$ is not outer for every $f$, $h\in H^\infty$,
see [No] for (it seems the first) example of such functions 
$\varphi$ and $\theta$.
\end{remark}

Now we see that to construct a polynomially bounded operator
$T$ such that $T\prec S$ and $T$ is not similar to a contraction, 
it is sufficient to construct  a polynomially bounded operator
$T_0$ such that $T_0 \sim T_\theta$ for some inner function 
$\theta$, and $T_0$ is not similar to a contraction. 
This is done in Corollary \ref{cor64}.
 To construct a polynomially bounded operator
$T$ such that $T\sim S$ and $T$ is not similar to a contraction, 
it is sufficient to construct  a polynomially bounded operator
$T_0$ such that $T_0 \sim T_\theta$ for some inner function 
$\theta$, $T_0$ is not similar to a contraction, and 
there exist  quasiaffinities $X_0$ and $Y_0$ and 
an outer function
$g\in H^\infty$ such that $T_0X_0 = X_0T_\theta$, 
$Y_0T_0 = T_\theta Y_0$,
 and $Y_0X_0 = g(T_\theta)$. This is done in Theorem \ref{thm71}. The function 
$g$ is from Lemma \ref{lem34}.
The operator $Z$ from Proposition \ref{prop22} can be zero in both cases.
The operator $Z$ was considered to show that the choice of $Z$ does not allow to 
avoid the condition on the existence of an {\it outer} function $g$
such that $Y_0X_0 = g(T_\theta)$ in this construction.

\smallskip 

Put $\chi(z) = z$, $z\in\mathbb  T$. Let $U$ be the bilateral shift of 
multiplicity 1, that is, the operator of multiplication by $\chi$ 
on $L^2(\mathbb  T)$. $U$ has the following form with respect to 
the decomposition $L^2(\mathbb  T) = H^2 \oplus H^2_-$:
 \begin{equation} \label{22} U = \begin{pmatrix} S & K \\  \mathbb  O & S_\ast \end{pmatrix}, \end{equation}
where $S$  is the unilateral shift of multiplicity 1,
$K\colon H^2_- \to H^2$ acts by the formula $K\chi^{-n}=0$, $n\geq 2$,
$K\chi^{-1}=\chi^0$, $S_\ast\colon H^2_- \to H^2_-$ 
acts by the formula $S_\ast\chi^{-n}=\chi^{-n+1}$, $n\geq 2$,
$S_\ast\chi^{-1}=0$. 

 The following propositions will be applied to construct an example of
a polynomially bounded operator, which is not similar to a contraction 
and is quasisimilar to $U$. 

\begin{proposition}\label{prop26}   Suppose $\mathcal H$ is a
Hilbert space, and $T_1\colon\mathcal H\to\mathcal H$ and
$Y_1\colon H^2 \to \mathcal H$ are operators such that 
$Y_1S = T_1Y_1$. Put $x_0 = Y_1\chi^0$, 
$A\colon H^2_-\to \mathcal H$, $A\chi^{-1}=x_0$, $A\chi^{-n}=0$, $n\geq 2$,
$$T = \begin{pmatrix} T_1 & A \\  \mathbb  O & S_\ast \end{pmatrix} 
\ \ \text{and} \ \ \ 
Y = \begin{pmatrix} Y_1 & \mathbb  O \\  \mathbb  O & I_{H^2_-} \end{pmatrix} .$$
Then $YU = TY$.

 If $T_1$ is power bounded, then $T$ is 
power bounded. If  $T_1$ is polynomially bounded, then $T$ is 
polynomially bounded. \end{proposition}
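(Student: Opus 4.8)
The plan is to verify the intertwining relation $YU=TY$ by a direct block-matrix computation, and then to deduce power boundedness (or polynomial boundedness) of $T$ from that of $T_1$ by an argument parallel to the one in Proposition~\ref{prop22}, exploiting the fact that $S_\ast$ is a contraction (it is a restriction of the coisometry $U^\ast$, hence $\|S_\ast\|\le 1$) and that $A$ has rank one.

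First I would check $YU=TY$. With respect to $L^2(\mathbb T)=H^2\oplus H^2_-$ and the use of \eqref{22}, the product $YU$ has blocks $Y_1S$, $Y_1K$, $\mathbb O$, $S_\ast$, while $TY$ has blocks $T_1Y_1$, $A$, $\mathbb O$, $S_\ast$. The diagonal blocks agree by hypothesis $Y_1S=T_1Y_1$ and trivially for the lower-right corner. For the upper-right corner one must check $Y_1K=A$ as operators $H^2_-\to\mathcal H$: since $K\chi^{-1}=\chi^0$ and $K\chi^{-n}=0$ for $n\ge2$, we get $Y_1K\chi^{-1}=Y_1\chi^0=x_0=A\chi^{-1}$ and $Y_1K\chi^{-n}=0=A\chi^{-n}$ for $n\ge2$, so $Y_1K=A$ and hence $YU=TY$.

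Next I would establish the norm estimates. For a polynomial $p$, writing $p(T)$ in block form gives
$$p(T)=\begin{pmatrix} p(T_1) & A_{(p)} \\ \mathbb O & p(S_\ast)\end{pmatrix},$$
where $A_{(p)}=P_{\mathcal H}p(T)|_{H^2_-}$. From the intertwining $Yp(U)=p(T)Y$ one reads off, with respect to the decomposition, that $A_{(p)}=P_{\mathcal H}p(U)|_{H^2_-}$ composed appropriately; more concretely, since $YU=TY$ gives $Yp(U)=p(T)Y$, comparing upper-right blocks yields $Y_1\,P_{H^2}p(U)|_{H^2_-}=p(T_1)\cdot\mathbb O+A_{(p)}\cdot I$, i.e. $A_{(p)}=Y_1 P_{H^2}p(U)|_{H^2_-}$. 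Hence $\|A_{(p)}\|\le\|Y_1\|\,\|p(U)\|\le\|Y_1\|\,\|p\|_\infty$, since $U$ is unitary. Also $\|p(T_1)\|\le M_{pb}(T_1)\|p\|_\infty$, and $\|p(S_\ast)\|\le\|p\|_\infty$ because $S_\ast$ is a contraction (even a $C_0$-contraction, being $T_\chi$). Therefore
$$\|p(T)\|\le\sqrt3\,\max\bigl(M_{pb}(T_1),\ \|Y_1\|,\ 1\bigr)\|p\|_\infty,$$
so $T$ is polynomially bounded. The power-bounded case is identical: replace $p$ by $\chi^m$, use $\|T_1^m\|\le\sup_j\|T_1^j\|$, $\|U^m\|=1$, $\|S_\ast^m\|\le1$, and bound the off-diagonal block by $\|Y_1\|$.

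I do not expect a genuine obstacle here; the only point requiring a little care is the identification of the off-diagonal block $A_{(p)}$ in terms of $Y_1$ and $p(U)$ (rather than trying to compute it directly from powers of $A$ and $S_\ast$), which is what keeps the bound uniform in $p$. An alternative, equally clean route is to observe that $Y_1$ is bounded and $U$ is unitary, so the operator $\begin{pmatrix} Y_1 & \mathbb O\\ \mathbb O & I\end{pmatrix}$ intertwines the polynomially bounded $U$ with $T$ — but that alone does not transfer polynomial boundedness (intertwining by a non-invertible map does not), so the block estimate above is really needed. The boundedness of $A$ itself ($\|A\|=\|x_0\|=\|Y_1\chi^0\|\le\|Y_1\|$) guarantees $T$ is a bounded operator to begin with.
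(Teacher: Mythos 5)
Your proof is correct and is essentially the paper's argument: the identity $Y_1K=A$ gives $YU=TY$, and the norm bound comes from reading the corner block $A_{(p)}=Y_1K_{(p)}$ off the relation $Yp(U)=p(T)Y$ and estimating each block, exactly the mechanism of Proposition~\ref{prop22}. The paper reaches the same estimate by passing to $T^\ast$ and invoking Proposition~\ref{prop22} with $Z=\mathbb O$, $R_0=S^\ast$, $R=U^\ast$; your inlined version, with the explicit constant $\sqrt3\max\bigl(M_{pb}(T_1),\|Y_1\|,1\bigr)$, is the same computation carried out directly for $T$ instead of its adjoint.
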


\begin{proof} Clearly, $Y_1K=A$. From this equality and \eqref{22} 
we conclude that  $YU = TY$. Note that  
$$T^\ast = \begin{pmatrix} (S_\ast)^\ast & A^\ast \\  \mathbb  O & 
T_1^\ast \end{pmatrix} $$
with respect to the decomposition $H^2_-\oplus\mathcal H$.
Applying Proposition \ref{prop22} with
$T_0=T_1^\ast$, $R_0=S^\ast$, $V=(S_\ast)^\ast$, $Y_0=Y_1^\ast$, 
$Z=\mathbb  O$, $K^\ast$ instead of $K$ and $A^\ast$ instead of $A$, and 
taking into account that $R=U^\ast$, we obtain the conclusion 
of Proposition \ref{prop22}  for $T^\ast$, and, consequently, for $T$.
 \end{proof}

\begin{proposition}\label{prop27}  Suppose 
$g\in H^\infty$ is an outer function, $\mathcal H$ is a
Hilbert space, $T_1\colon\mathcal H\to\mathcal H$ is an operator,
$Y_1\colon H^2 \to \mathcal H$ and $X_1\colon \mathcal H \to H^2$
are quasiaffinities such that 
$Y_1S = T_1Y_1$, $X_1T_1 = SX_1$, and $X_1Y_1 =g(S)$.
Let $T$ and $Y$ be defined as in  Proposition \ref{prop26}.
Put $K_{(g)} = P_{H^2}g(U)|_{H^2_-}$ and
$$X = \begin{pmatrix} X_1 & K_{(g)}\\  \mathbb  O & g(S_\ast) \end{pmatrix}.$$
Then $X$ and $Y$ are quasiaffinities such that 
$YU = TY$, $XT = UX$, and $XY =g(U)$.\end{proposition}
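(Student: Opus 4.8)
The plan is to verify the three asserted properties of $X$ and $Y$ in turn. The intertwining $YU = TY$ and the fact that $Y$ is a quasiaffinity are already contained in Proposition \ref{prop26} (applied with the present $T_1$, $Y_1$), since $Y_1$ is assumed to be a quasiaffinity: the block-triangular $Y$ with the quasiaffinities $Y_1$ and $I_{H^2_-}$ on the diagonal is a quasiaffinity by Lemma \ref{lem21}(i). So only the statements about $X$ require work. First I would check that $X$ is a quasiaffinity: $g$ is outer, so $g(S)$ is a quasiaffinity on $H^2$ and hence $g(S_\ast)$, being unitarily equivalent to (a piece of) the adjoint picture — more directly, $g(S_\ast)$ has trivial kernel and dense range because $S_\ast$ is the compression of $U$ to $H^2_-$ and $g$ is outer — is a quasiaffinity on $H^2_-$; also $X_1$ is a quasiaffinity by hypothesis. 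Then the block-triangular form of $X$ with $X_1$ and $g(S_\ast)$ on the diagonal is a quasiaffinity by Lemma \ref{lem21}(i). (One should record why $g(S_\ast)$ is a quasiaffinity: $S_\ast^\ast$ is a restriction of $U^\ast = $ multiplication by $\bar\chi$ to an invariant subspace, and for outer $g$ the operator $\overline{g}(U^\ast)^\ast = g(U)$ restricted appropriately has dense range; the cleanest route is to note $S_\ast \cong S^\ast$ acting on $H^2$ via $\chi^{-n}\mapsto \chi^{n-1}$, under which $g(S_\ast)$ becomes $g(S^\ast)$, and $g(S^\ast) = (\bar g(S))^\ast$ has trivial kernel and dense range exactly because $g$ is outer.)

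Next I would prove $XT = UX$. Writing both sides in block form with respect to $\mathcal H \oplus H^2_-$ on the domain and $H^2 \oplus H^2_-$ on the range, and using \eqref{22} for $U$ together with the definitions $T = \begin{pmatrix} T_1 & A \\ \mathbb O & S_\ast\end{pmatrix}$, $X = \begin{pmatrix} X_1 & K_{(g)} \\ \mathbb O & g(S_\ast)\end{pmatrix}$, the $(1,1)$ entry gives $X_1 T_1 = S X_1$, which is a hypothesis; the $(2,2)$ entry gives $g(S_\ast) S_\ast = S_\ast g(S_\ast)$, which holds since $g(S_\ast)$ is a function of $S_\ast$; the $(2,1)$ entry is $\mathbb O = \mathbb O$. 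The one substantive identity is the $(1,2)$ entry, namely
\begin{equation*}
X_1 A + K_{(g)} S_\ast = S X_1\cdot 0 + S K_{(g)} + K\, g(S_\ast),
\end{equation*}
i.e. $X_1 A + K_{(g)} S_\ast = S K_{(g)} + K\, g(S_\ast)$, where $K\colon H^2_-\to H^2$ is as in \eqref{22}. The terms involving only $K_{(g)}$, $K$, $S$, $S_\ast$ rearrange to the $(1,2)$ block of the identity $g(U)U = U g(U)$ written out via \eqref{22}, so they cancel against each other, leaving exactly $X_1 A = K\, g(S_\ast) + \big(S K_{(g)} - K_{(g)} S_\ast\big)$ minus that cancellation; unwinding, the claim reduces to $X_1 A = (\text{the } (1,2)\text{-block contribution from } K)$, which I expect to collapse to a rank-one identity on $\chi^{-1}$. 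Concretely, $A\chi^{-1} = x_0 = Y_1\chi^0$, so $X_1 A\chi^{-1} = X_1 Y_1\chi^0 = g(S)\chi^0 = g$, and $A\chi^{-n} = 0$ for $n\geq 2$; on the other side one computes $K\,g(S_\ast)\chi^{-1}$ and the commutator term on $\chi^{-1}$ and on $\chi^{-n}$, $n\geq 2$, and checks both sides agree. This is the main obstacle — it is the only place where the specific structure of $K$, $K_{(g)}$, $A$, and the identity $X_1 Y_1 = g(S)$ all come together — but it is a finite bookkeeping computation on the standard basis $\{\chi^{-n}\}_{n\geq 1}$ of $H^2_-$, using $K_{(g)}\chi^{-n} = P_{H^2}(\bar\chi^n g)$ and $g(S_\ast)\chi^{-n} = P_{H^2_-}(\bar\chi^{n-1} g) $ (with a shift), so it poses no conceptual difficulty.

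Finally I would verify $XY = g(U)$. Again in block form with respect to $H^2\oplus H^2_-$,
\begin{equation*}
XY = \begin{pmatrix} X_1 & K_{(g)} \\ \mathbb O & g(S_\ast)\end{pmatrix}\begin{pmatrix} Y_1 & \mathbb O \\ \mathbb O & I_{H^2_-}\end{pmatrix} = \begin{pmatrix} X_1 Y_1 & K_{(g)} \\ \mathbb O & g(S_\ast)\end{pmatrix} = \begin{pmatrix} g(S) & K_{(g)} \\ \mathbb O & g(S_\ast)\end{pmatrix},
\end{equation*}
using $X_1 Y_1 = g(S)$. On the other hand, applying the $H^\infty$-functional calculus to the block-triangular form \eqref{22} of $U$ shows that $g(U) = \begin{pmatrix} g(S) & P_{H^2}g(U)|_{H^2_-} \\ \mathbb O & g(S_\ast)\end{pmatrix}$; indeed the $(1,1)$ and $(2,2)$ entries of $g(U)$ are $P_{H^2}g(U)|_{H^2} = g(S)$ and $P_{H^2_-}g(U)|_{H^2_-} = g(S_\ast)$ (since $H^2$ is invariant and $H^2_-$ is coinvariant for $U$), while the $(1,2)$ entry is by definition $K_{(g)}$. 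Hence $XY = g(U)$, completing the proof. The only care needed here is the standard remark that compressions of $g(U)$ to the invariant subspace $H^2$ and the semi-invariant subspace $H^2_-$ are $g$ of the respective compressions of $U$, which is immediate from the analogous fact for polynomials and a limiting argument, or from the multiplicativity of the $H^\infty$-calculus on the absolutely continuous unitary $U$.
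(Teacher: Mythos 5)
Your proposal is correct and follows essentially the same route as the paper's proof: quasiaffinity of $X$ and $Y$ via Lemma \ref{lem21}(i) and the outerness of $g$, reduction of $XT=UX$ to the single $(1,2)$-block identity \eqref{23}, which is settled by the rank-one computation $X_1A\chi^{-1}=X_1Y_1\chi^0=g$ (equivalently $X_1A=g(S)K$) combined with the $(1,2)$ block of $g(U)U=Ug(U)$, and a direct block computation for $XY=g(U)$. (Only a cosmetic slip: in your aside, $g(S_\ast)\chi^{-n}$ should be $P_{H^2_-}(\bar\chi^{n}g)$ rather than $P_{H^2_-}(\bar\chi^{n-1}g)$; this plays no role in the argument.)
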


\begin{proof} Since $g$ is outer, $g(S_\ast)$ is a quasiaffinity.
By Lemma \ref{lem21} (i), $X$ and $Y$ are quasiaffinities.
The equality $YU = TY$ is proved in Proposition \ref{prop26}. 
Using the matrix forms of $U$, $T$, and $X$, and the equality 
$X_1T_1 = SX_1$, it is easy to see that the equality $XT = UX$
follows from the equality
 \begin{equation} \label{23} X_1A + K_{(g)} S_\ast = S K_{(g)} + K g(S_\ast). \end{equation} 
We show that $X_1A = g(S)K$. Indeed, for $n\geq 2$ we have that
$A\chi^{-n} = 0$ and $K\chi^{-n} = 0$. Furthermore, 
$X_1A\chi^{-1} = X_1x_0 = X_1 Y_1\chi^0 = g$, and 
$g(S)K\chi^{-1} = g(S)\chi^0 = g$.
To prove \eqref{23} it remains to note that 
$S K_{(g)} + K g(S_\ast) = g(S)K + K_{(g)} S_\ast$, and
this equality  follows from the equality $g(U)U =Ug(U)$ 
written in the matrix form.
The equality $XY =g(U)$ can be easily obtained from
the definitions of $X$ and $Y$ and the matrix form of $g(U)$. \end{proof}

\begin{corollary}\label{cor28}  Suppose $T_1$ satisfies the conditions of
Proposition \ref{prop27}, and $T$ is defined in Proposition \ref{prop27}.
If $T_1$ is polynomially bounded, then $T$ is 
polynomially bounded. If $T_1$ is not similar to a contraction, 
then $T$ is not similar to a contraction.\end{corollary}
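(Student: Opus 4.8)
The plan is to obtain Corollary~\ref{cor28} as an immediate consequence of Proposition~\ref{prop27} together with the general principle, already used in the proof of Corollary~\ref{cor23}, that polynomial boundedness and ``not being similar to a contraction'' are inherited along the maps constructed in Proposition~\ref{prop27}. Concretely, Proposition~\ref{prop27} produces $T$ and $Y$ from Proposition~\ref{prop26}, and exhibits quasiaffinities $X$, $Y$ with $YU=TY$, $XT=UX$, $XY=g(U)$; so the structural work is done, and only the two stated implications remain.

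For polynomial boundedness: Proposition~\ref{prop26} already states that if $T_1$ is polynomially bounded, then $T$ is polynomially bounded (its proof reduces the claim for $T$, via passage to adjoints, to Proposition~\ref{prop22} applied with $T_0=T_1^\ast$, $R_0=S^\ast$, $Z=\mathbb O$, using that $S^\ast$ and $T_1^\ast$ are polynomially bounded when $S$ and $T_1$ are). So for the first assertion I would simply invoke Proposition~\ref{prop26}; no new estimate is needed. One could equally invoke Proposition~\ref{prop22} directly through the matrix form of $T$ displayed in Proposition~\ref{prop26}.

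For the second assertion: suppose, for contradiction, that $T$ is similar to a contraction. The relevant fact is that if an operator is similar to a contraction, then so is the compression of that operator to any semi-invariant subspace --- exactly the fact quoted in the proof of Corollary~\ref{cor23}. Here $\mathcal H$, viewed as the second summand in the decomposition $\mathcal H\oplus H^2_-$ on which $T$ acts, is a semi-invariant subspace of $T$ (it is of the form $\mathcal M_1\ominus\mathcal M_2$ for $T$-invariant subspaces $\mathcal M_2=\{0\}\oplus H^2_-\subset\mathcal M_1=\mathcal H\oplus H^2_-$, using that $S_\ast$ leaves $H^2_-$ invariant and that $T$ is upper triangular), and the compression of $T$ to $\mathcal H$ is $T_1$. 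Hence $T_1$ would be similar to a contraction, contrary to hypothesis. Therefore, if $T_1$ is not similar to a contraction, neither is $T$. This is the whole argument; I do not expect any genuine obstacle, since every ingredient is either proved just above (Proposition~\ref{prop26}, Proposition~\ref{prop27}) or is the same semi-invariance observation already employed in Corollary~\ref{cor23}. The only point to state carefully is that $\mathcal H$ sits as a semi-invariant subspace for $T$ in the required triangular fashion; one should make that explicit rather than leave it implicit.

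Alternatively, and perhaps more in the spirit of the surrounding results, one can argue via the intertwining maps themselves: if $XTX_{\mathrm{sim}}^{-1}$ were a contraction for some invertible $X_{\mathrm{sim}}$, then using $YU=TY$ and $XT=UX$ from Proposition~\ref{prop27} one sees $T_1$ compresses out of $T$; but the cleanest route is still the semi-invariant-subspace statement, so I would present the contradiction argument above and keep the proof to two or three sentences.
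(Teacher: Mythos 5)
Your proof is correct in substance and follows exactly the paper's route: the first assertion is quoted from Proposition \ref{prop26}, and the second uses the fact that similarity to a contraction passes to compressions on semi-invariant subspaces, which is precisely what the paper means by ``the same reasons as at the end of the proof of Corollary \ref{cor23}.''

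One detail you flagged as ``the only point to state carefully'' is, however, stated incorrectly. The subspace $\{0\}\oplus H^2_-$ is \emph{not} invariant for $T$: with respect to $\mathcal H\oplus H^2_-$ one has $T(0,h)=(Ah,S_\ast h)$, and $A\neq\mathbb O$ in general, so your proposed pair $\mathcal M_2=\{0\}\oplus H^2_-\subset\mathcal M_1=\mathcal H\oplus H^2_-$ does not exhibit $\mathcal H$ as semi-invariant. Since $T_1$ sits in the \emph{upper-left} corner of the triangular matrix (unlike $T_0$ in Corollary \ref{cor23}, which sits in the lower-right corner), the correct observation is the simpler one: $\mathcal H\oplus\{0\}$ is invariant for $T$, with $T|_{\mathcal H\oplus\{0\}}=T_1$, so $T_1$ is a restriction of $T$ to an invariant subspace --- a special case of a compression to a semi-invariant subspace (take $\mathcal M_1=\mathcal H\oplus\{0\}$ and $\mathcal M_2=\{0\}$). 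With that correction the contradiction argument goes through verbatim and the proof is complete.
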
 

\begin{proof} The assertion about polynomial boundedness is proved
in Proposition \ref{prop26}. The assertion about similarity to a contraction
follows from the same reasons as at the end of the proof of 
Corollary \ref{cor23}. \end{proof}

The following lemma shows that there exist polynomially 
bounded operators that are quasisimilar to cyclic 
reductive unitaries and are not similar to contractions.
A function $g$ in the example will be
$g(z)=\exp(-(\frac{1+z}{1-z})^\alpha)$, 
$z\in\operatorname{clos}\mathbb  D\setminus\{1\}$, $0<\alpha<1$, 
see Lemma \ref{lem34}.
The interested readers
 can find sets $\sigma$ 
satisfying the condition of the lemma themselves.

 \begin{lemma}\label{lem29} Suppose  $T$ is an operator, and 
$X$ and $Y$ are quasiaffinities such that 
$YU = TY$, and $XT = UX$.
Since $XY\in\{U\}'$, there exists a function $g\in L^\infty$ 
such that  $XY =g(U)$.
Let $\sigma\subset\mathbb  T$ be a measurable set  
and let $\delta>0$ be such that 
$|g|\geq \delta$ a.e. on $\sigma$.
Put $\mathcal M = \operatorname{clos}YL^2(\sigma)$,
where $L^2(\sigma)=\{h\in L^2: h=0$  a.e. on $\mathbb  T\setminus\sigma\}$.
Then  \begin{equation} \label{24} T|_{\mathcal M} \approx U|_{L^2(\sigma)}   \end{equation} 
and $P_{\mathcal M^\perp}T|_{\mathcal M^\perp}\sim U|_{L^2(\mathbb  T\setminus\sigma)}$.
If $T$ is power bounded and is not similar to a contraction, then 
$P_{\mathcal M^\perp}T|_{\mathcal M^\perp}$ is not similar to a contraction. \end{lemma}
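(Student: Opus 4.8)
The plan is to analyze the two summands of the decomposition $\mathcal{M}\oplus\mathcal{M}^\perp$ separately, exploiting the quasisimilarity $U\sim T$ furnished by $X$ and $Y$. First I would record the basic structure: $U$ decomposes as $U|_{L^2(\sigma)}\oplus U|_{L^2(\mathbb{T}\setminus\sigma)}$, both summands being absolutely continuous reductive unitaries; the multiplier $g$ belongs to $L^\infty$ and, being the symbol of the quasiaffinity $XY$, it is nonzero a.e.\ on $\mathbb{T}$. Since $\mathcal{M}=\operatorname{clos}YL^2(\sigma)$ and $YU=TY$, the space $\mathcal{M}$ is invariant for $T$, and $Y$ restricts to a map $Y_\sigma\colon L^2(\sigma)\to\mathcal{M}$ with dense range intertwining $U|_{L^2(\sigma)}$ and $T|_{\mathcal{M}}$; since $\ker Y=\{0\}$, $Y_\sigma$ is a quasiaffinity, so $U|_{L^2(\sigma)}\prec T|_{\mathcal{M}}$.

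For the similarity \eqref{24} the key is to produce a bounded inverse intertwiner. The idea is that $XY=g(U)$ restricted to $L^2(\sigma)$ equals multiplication by $g|_\sigma$, which is \emph{invertible} on $L^2(\sigma)$ precisely because $|g|\ge\delta>0$ a.e.\ on $\sigma$; its inverse is multiplication by $1/g$, bounded by $1/\delta$. Now $XY$ maps $L^2(\sigma)$ into itself? — not quite, but $X$ carries $\mathcal{M}=\operatorname{clos}YL^2(\sigma)$ into $\operatorname{clos}g(U)L^2(\sigma)=\operatorname{clos}L^2(\sigma)=L^2(\sigma)$ using $XT=UX$ and reductivity, so $X$ restricts to $X_\sigma\colon\mathcal{M}\to L^2(\sigma)$ with $X_\sigma T|_{\mathcal{M}}=U|_{L^2(\sigma)}X_\sigma$ and $X_\sigma Y_\sigma = g(U)|_{L^2(\sigma)} = (1/g(U))^{-1}$, an invertible operator. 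From this I would conclude that $Y_\sigma$ is bounded below (hence injective with closed, therefore full, range), so $Y_\sigma$ is invertible, giving $T|_{\mathcal{M}}\approx U|_{L^2(\sigma)}$.

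Next, for the compression $P_{\mathcal{M}^\perp}T|_{\mathcal{M}^\perp}$: the plan is to intertwine it with $U|_{L^2(\mathbb{T}\setminus\sigma)}$ in both directions. In one direction, $Y$ induces a map from $L^2(\mathbb{T}\setminus\sigma)$: compose with the quotient and use $YU=TY$ together with $\overline{YL^2(\sigma)}=\mathcal{M}$ to get $P_{\mathcal{M}^\perp}Y|_{L^2(\mathbb{T}\setminus\sigma)}$ intertwining $U|_{L^2(\mathbb{T}\setminus\sigma)}$ with $P_{\mathcal{M}^\perp}T|_{\mathcal{M}^\perp}$; its range is dense because $L^2=L^2(\sigma)\oplus L^2(\mathbb{T}\setminus\sigma)$ and $\overline{YL^2}=\mathcal{H}$. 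Injectivity modulo $\mathcal{M}$: if $Yh\in\mathcal{M}$ with $h\in L^2(\mathbb{T}\setminus\sigma)$, apply $X$ to get $g(U)h\in X\mathcal{M}\subset L^2(\sigma)$, but $g(U)h=gh$ is supported off $\sigma$ and $g\ne0$ a.e., forcing $h=0$; hence $U|_{L^2(\mathbb{T}\setminus\sigma)}\prec P_{\mathcal{M}^\perp}T|_{\mathcal{M}^\perp}$. In the other direction, $X$ maps $\mathcal{H}$ to $L^2$ with $X\mathcal{M}\subset L^2(\sigma)$, so $P_{L^2(\mathbb{T}\setminus\sigma)}X$ induces $P_{L^2(\mathbb{T}\setminus\sigma)}X$ from $\mathcal{M}^\perp$, intertwining $P_{\mathcal{M}^\perp}T|_{\mathcal{M}^\perp}$ with $U|_{L^2(\mathbb{T}\setminus\sigma)}$; dense range follows from $\overline{X\mathcal{H}}=L^2$, and the injectivity-modulo-$\mathcal{M}$ argument is symmetric, using $g\ne0$ a.e.\ and that $\ker X=\{0\}$ combined with $\overline{X_\sigma\mathcal{M}}$ being all of $L^2(\sigma)$ so that $\mathcal{M}=X^{-1}(\cdots)$ is captured exactly. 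This yields $P_{\mathcal{M}^\perp}T|_{\mathcal{M}^\perp}\sim U|_{L^2(\mathbb{T}\setminus\sigma)}$.

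The final clause is immediate: if $P_{\mathcal{M}^\perp}T|_{\mathcal{M}^\perp}$ \emph{were} similar to a contraction, then since it is power bounded with spectrum on $\mathbb{T}$ it would in fact be similar to a unitary; combined with \eqref{24}, the operator $T$ — which by \eqref{15} and the structure $\begin{pmatrix} T|_{\mathcal{M}} & \ast \\ \mathbb{O} & P_{\mathcal{M}^\perp}T|_{\mathcal{M}^\perp}\end{pmatrix}$ has both diagonal blocks similar to contractions (indeed unitaries) — would then be similar to a contraction by a standard argument (a power bounded, or polynomially bounded, operator whose restriction to an invariant subspace and corresponding compression are both similar to contractions, with the two unitary parts absolutely continuous, is itself similar to a contraction; alternatively cite that the $2\times2$ triangular form over similar-to-contraction blocks whose off-diagonal is a Hankel-type operator is again similar to a contraction — here one uses that $T\sim U$ forces $T$ absolutely continuous and the block structure to be of Foguel type). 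This contradicts the hypothesis, so $P_{\mathcal{M}^\perp}T|_{\mathcal{M}^\perp}$ is not similar to a contraction.

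\textbf{Main obstacle.} The delicate point is not the formal intertwining bookkeeping but verifying that $X$ genuinely maps $\mathcal{M}$ \emph{into} $L^2(\sigma)$ (and dually that the induced maps have the claimed kernels): this rests on the reductivity of $U$, on $\overline{g(U)L^2(\sigma)}=L^2(\sigma)$ because $g$ is nonvanishing a.e., and on chasing $\ker X=\ker Y=\{0\}$ through the quotient — the subtlety being that $\mathcal{M}$ is defined as a \emph{closure}, so one must argue that $X(\operatorname{clos}YL^2(\sigma))\subset\operatorname{clos}XYL^2(\sigma)=\operatorname{clos}g(U)L^2(\sigma)=L^2(\sigma)$ by continuity of $X$, and symmetrically that the preimage descriptions are exact rather than merely containments. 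Once the containment $X\mathcal{M}\subset L^2(\sigma)$ and its dual are in hand, the boundedness-below coming from invertibility of $g(U)|_{L^2(\sigma)}$ does the rest cleanly.
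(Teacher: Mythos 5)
Your treatment of the first two assertions is essentially the paper's own proof: you restrict $X$ and $Y$ to get $X_\sigma\colon\mathcal M\to L^2(\sigma)$ and $Y_\sigma\colon L^2(\sigma)\to\mathcal M$, use $X_\sigma Y_\sigma=g(U)|_{L^2(\sigma)}$ (invertible because $|g|\ge\delta$ on $\sigma$) to upgrade the quasiaffinity $Y_\sigma$ to an invertible intertwiner, and then check that $P_{\mathcal M^\perp}Y|_{L^2(\mathbb T\setminus\sigma)}$ and $P_{L^2(\mathbb T\setminus\sigma)}X|_{\mathcal M^\perp}$ are quasiaffinities; the paper does the last step by triangularizing $X$ and $Y$ and invoking Lemma~\ref{lem21}(iii), while your direct kernel chase (push $h$ forward by $X$ and use the support of $gh$) is an equivalent substitute. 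Up to there the plan is sound.

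The final clause, however, has a genuine gap. Your route is: (i) $T_1:=P_{\mathcal M^\perp}T|_{\mathcal M^\perp}$, being power bounded ``with spectrum on $\mathbb T$'' and similar to a contraction, would be similar to a unitary; (ii) a triangular power-bounded operator both of whose diagonal blocks are similar to contractions is similar to a contraction. Step (i) is unjustified: quasisimilarity to $U|_{L^2(\mathbb T\setminus\sigma)}$ does not locate $\sigma(T_1)$ on $\mathbb T$, and even for a genuine contraction with spectrum in $\mathbb T$ (e.g.\ $T_\theta$ with $\theta$ a singular inner function supported at one point) similarity to a unitary fails; one would need power boundedness of $T_1^{-n}$, which you do not have. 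Step (ii), as you first state it, is exactly what the Foguel and Pisier examples refute. What actually closes the argument — and what the paper uses — is a commutator-equation result ([B, Corollary 2.2], alternatively [C, Corollary 4.2]): since $T$ is power bounded and the \emph{restriction} $T|_{\mathcal M}$ is similar to the unitary $U|_{L^2(\sigma)}$ by \eqref{24}, the equation $T|_{\mathcal M}Z-ZT_1=A$ (with $A$ the off-diagonal entry) has a bounded solution, so $T\approx U|_{L^2(\sigma)}\oplus T_1$; then similarity of $T_1$ to a contraction would force the same for $T$, a contradiction. Note that only one diagonal block needs to be similar to a unitary, and that block is the one already handled by \eqref{24}, so your detour through (i) is both incorrect and unnecessary. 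Your proposal never identifies this splitting mechanism, and without it the contradiction does not go through.
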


\begin{proof} Clearly, $T \mathcal M \subset \mathcal M$, and 
$$\operatorname{clos}X\mathcal M = \operatorname{clos}XYL^2(\sigma)=
\operatorname{clos}g(U)L^2(\sigma)=L^2(\sigma).$$
Put $$X_\sigma =  X|_{\mathcal M}\colon\mathcal M\to L^2(\sigma) \  \ \text{ and } \ \ 
Y_\sigma =  Y|_{L^2(\sigma)}\colon L^2(\sigma)\to\mathcal M.$$
Then $X_\sigma$ and  $Y_\sigma$ are quasiaffinities. 
Since $X_\sigma Y_\sigma = g(U)|_{L^2(\sigma)}$ is invertible, 
we conclude that 
$X_\sigma$ and  $Y_\sigma$ are invertible. Since 
$X_\sigma T|_{\mathcal M} = U|_{L^2(\sigma)}X_\sigma $,
\eqref{24} is proved.

Put $T_1 = P_{\mathcal M^\perp}T|_{\mathcal M^\perp}$, 
$X_1 = P_{L^2(\mathbb  T\setminus\sigma)} X|_{\mathcal M^\perp}$, and 
$Y_1 = P_{\mathcal M^\perp}Y|_{L^2(\mathbb  T\setminus\sigma)}$. 
It is easy to see that $X_1T_1 = U|_{L^2(\mathbb  T\setminus\sigma)}X_1$ and 
$Y_1U|_{L^2(\mathbb  T\setminus\sigma)} = T_1Y_1$. 
By Lemma \ref{lem21} (iii), $X_1$ and $Y_1$ are quasiaffinities. 
By [B, Corollary 2.2] applied to $T^\ast$ we have that
$T\approx  U|_{L^2(\sigma)}\oplus T_1$. 
If $T_1$ is similar to a contraction, we conclude that 
$T$ is similar to a contraction. 
Also, to show that $T$ is similar to a contraction, 
if $T_1$ is similar to a contraction, 
[C, Corollary 4.2] can be applied. \end{proof}

\section{Preliminaries: function theory}

For $\lambda\in\mathbb  D$ denote by $b_\lambda$  a Blaschke factor:
 $b_\lambda(z)=\frac{|\lambda|}{\lambda}
\frac{\lambda-z}{1-\overline\lambda z}$, $z\in\mathbb  D$.
Note that 
 \begin{equation} \label{31} b_\lambda\circ\beta_w = \zeta_{w,\lambda}b_{\beta_w(\lambda)}, \ \
\text{ where }   \zeta_{w,\lambda}\in\mathbb  T.  \end{equation}  
Recall that $\beta_w$ is defined in \eqref{13}.

\smallskip

The following lemma is very simple, but useful.

\begin{lemma}\label{lem31} Let $\{B_N\}_N$ be a sequence of
 finite Blaschke products.
Then there exists a sequence $\{r_N\}_N$ such that $0<r_N<1$
and for every sequence $\{w_N\}_N\subset\mathbb  D$ such that 
$|w_N|\geq r_N$ there exists 
a sequence $\{\zeta_N\}_N\subset\mathbb  T$ such that 
 the product $\prod_N\zeta_NB_N\circ\beta_{w_N}$ converges. \end{lemma}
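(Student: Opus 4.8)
The plan is to produce the radii $r_N$ so that each individual factor $B_N\circ\beta_{w_N}$, after a suitable unimodular normalization $\zeta_N$, is uniformly close to the constant $1$ on all of $\mathbb D$, with the errors summable; convergence of the infinite product then follows from the standard criterion that $\prod_N a_N$ converges (uniformly on $\mathbb D$, say) whenever $\sum_N \|a_N - 1\|_\infty < \infty$. So the real content is: given a finite Blaschke product $B_N$ of degree $d_N$, choose $r_N<1$ such that whenever $|w_N|\ge r_N$ there is a unimodular constant making $B_N\circ\beta_{w_N}$ within $2^{-N}$ (say) of $1$ uniformly on $\mathbb D$.

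First I would reduce to a single Blaschke factor. Write $B_N = \prod_{k} b_{\lambda_{N,k}}$ as a product over its (finitely many) zeros $\lambda_{N,k}\in\mathbb D$. By \eqref{31}, $b_{\lambda_{N,k}}\circ\beta_{w_N} = \zeta_{w_N,\lambda_{N,k}} b_{\beta_{w_N}(\lambda_{N,k})}$ with $\zeta_{w_N,\lambda_{N,k}}\in\mathbb T$; hence $B_N\circ\beta_{w_N}$ equals a unimodular constant times a finite Blaschke product whose zeros are the points $\beta_{w_N}(\lambda_{N,k})$. Now observe that as $|w_N|\to 1$ with $\lambda_{N,k}$ fixed, $\beta_{w_N}(\lambda_{N,k})\to$ a point of $\mathbb T$ (namely $w_N/|w_N|$ in the limit, but more to the point $|\beta_{w_N}(\lambda)|\to 1$), and for a single Blaschke factor $b_\mu$ with $|\mu|$ close to $1$ one has $\|b_\mu - (-|\mu|/\mu)\mu/|\mu| \cdot(\text{unimodular})\|$ — more cleanly: $\|b_\mu + \mu/|\mu|\cdot 1\|$ — hmm, the clean statement is that $b_\mu(z) = \frac{|\mu|}{\mu}\frac{\mu - z}{1-\overline\mu z}$ tends to the unimodular constant $-\overline\mu/|\mu|$... let me just say: $\sup_{z\in\mathbb D}|b_\mu(z) - c_\mu| \to 0$ as $|\mu|\to 1$, where $c_\mu = \lim$ is a unimodular constant (in fact $c_\mu = -\overline{\mu}/|\mu|$, since for $|\mu|=1$, $\frac{\mu-z}{1-\overline\mu z} = \mu\frac{1 - \overline\mu z}{1-\overline\mu z}\cdot$... anyway $=\mu$, giving $b_\mu \equiv |\mu|/\mu\cdot\mu = |\mu| \cdot \mu/\mu$, i.e. the constant has modulus $1$). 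The point is purely that $b_\mu\to$ constant uniformly.

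Thus, given $\varepsilon_N := 2^{-N}$, choose $\rho_N\in(0,1)$ so close to $1$ that $|\mu|\ge\rho_N$ implies $\|b_\mu - c_\mu\|_\infty < \varepsilon_N/d_N$ for the appropriate unimodular $c_\mu$; then choose $r_N\in(0,1)$ so close to $1$ that $|w_N|\ge r_N$ forces $|\beta_{w_N}(\lambda_{N,k})|\ge\rho_N$ for every zero $\lambda_{N,k}$ of $B_N$ (possible since there are finitely many zeros and $|\beta_{w_N}(\lambda)|\to 1$ as $|w_N|\to1$ for each fixed $\lambda\in\mathbb D$). Then for $|w_N|\ge r_N$, writing $B_N\circ\beta_{w_N}$ as a unimodular constant times $\prod_k b_{\beta_{w_N}(\lambda_{N,k})}$ and using $\|\prod_k b_{\mu_k} - \prod_k c_{\mu_k}\|_\infty \le \sum_k \|b_{\mu_k} - c_{\mu_k}\|_\infty$ (telescoping, since each factor is bounded by $1$ in modulus on $\mathbb D$), we get that $B_N\circ\beta_{w_N}$ is within $\varepsilon_N$ of a unimodular constant $\gamma_N$. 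Set $\zeta_N = \overline{\gamma_N}$; then $\|\zeta_N B_N\circ\beta_{w_N} - 1\|_\infty < \varepsilon_N$, and since $\sum_N\varepsilon_N<\infty$ the product $\prod_N \zeta_N B_N\circ\beta_{w_N}$ converges (uniformly on $\mathbb D$, hence in $H^\infty$). The main—and only mildly delicate—obstacle is bookkeeping the two-stage choice of radii ($\rho_N$ controlling the size of the transported zeros, then $r_N$ guaranteeing $|\beta_{w_N}(\lambda)|\ge\rho_N$ uniformly over the finitely many zeros of $B_N$), together with making sure the unimodular normalization of a product of Blaschke factors is handled correctly; once the "single factor tends to a constant" observation is in hand, everything else is the elementary product-convergence criterion.
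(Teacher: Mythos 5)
Your reduction to single Blaschke factors via \eqref{31} and your choice of $\zeta_N$ as the product of the conjugated unimodular constants is exactly right and matches the paper. But the convergence criterion you then invoke fails: the claim that $\sup_{z\in\mathbb D}|b_\mu(z)-c_\mu|\to 0$ as $|\mu|\to 1$ for a unimodular constant $c_\mu$ is false. Each $b_\mu$ is an automorphism of $\mathbb D$, hence maps $\mathbb D$ \emph{onto} $\mathbb D$; in particular $b_\mu(\mu)=0$, so $\|b_\mu-c\|_\infty\ge 1$ for every unimodular $c$ (in fact the sup over $\mathbb D$ equals $2$). Consequently $\|\zeta_N B_N\circ\beta_{w_N}-1\|_\infty\ge 1$ for every admissible choice of $w_N$ and $\zeta_N$, since $B_N\circ\beta_{w_N}$ still has zeros in $\mathbb D$, and the hypothesis $\sum_N\|a_N-1\|_\infty<\infty$ of your product-convergence criterion can never be satisfied. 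The convergence $b_\mu\to 1$ is only locally uniform on compact subsets of $\mathbb D$ (one has $|1-b_\mu(z)|=\frac{(1-|\mu|)\,|\mu+|\mu|z|}{|\mu|\,|1-\overline\mu z|}\le \frac{2(1-|\mu|)}{1-a}$ for $|z|\le a$, $|\mu|\ge\tfrac12$), never uniform on all of $\mathbb D$.

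The repair leads directly to the paper's argument. ``Convergence'' of the infinite product here means convergence in the Blaschke-product sense (locally uniform convergence of the partial products to a function not identically zero), which is equivalent to the Blaschke condition on the combined zero set. The local estimate above shows this follows from $\sum_N\sum_{\lambda\in\Lambda_N}k_\lambda\bigl(1-|\beta_{w_N}(\lambda)|\bigr)<\infty$, where $\Lambda_N$ is the zero set of $B_N$ with multiplicities $k_\lambda$. So fix $\eta_N>0$ with $\sum_N\eta_N<\infty$ and choose $r_N$ so that $|w|\ge r_N$ forces $\sum_{\lambda\in\Lambda_N}k_\lambda(1-|\beta_w(\lambda)|)\le\eta_N$ (possible since $\Lambda_N$ is finite and $|\beta_w(\lambda)|\to1$ as $|w|\to1$); this replaces your two-stage choice of $\rho_N$ and $r_N$ and completes the proof.
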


\begin{proof} Denote by $\Lambda_N$ the set of zeros of $B_N$, 
then $B_N=\prod_{\lambda\in\Lambda_N}b_{\lambda}^{k_\lambda}$, 
where $1\leq k_\lambda<\infty$ is the multiplicity of $\lambda$.
For $w_N\in\mathbb  D$ put $\zeta_N = 
\prod_{\lambda\in\Lambda_N}\overline\zeta_{w_N,\lambda}^{k_\lambda}$, 
where $\zeta_{w_N,\lambda}$ are from \eqref{31},
then $\zeta_NB_N\circ\beta_{w_N}=
\prod_{\lambda\in\Lambda_N}b_{\beta_{w_N}
(\lambda)}^{k_\lambda}.$
Let $\{\eta_N\}_N$ be a sequence such that $\eta_N>0$ and
$\sum_N\eta_N<\infty$. Since $|\beta_w(\lambda)|\to 1$ when 
$|w|\to 1$ and $\Lambda_N$ is finite, there exists $0<r_N<1$ such that 
$\sum_{\lambda\in\Lambda_N}k_\lambda(1-|\beta_w(\lambda)|)\leq \eta_N$
for $w\in\mathbb  D$, $|w|\geq r_N$.
Let $w_N\in\mathbb  D$, and let $|w_N|\geq r_N$. Then 
$$\sum_N\sum_{\lambda\in\Lambda_N}k_\lambda(1-|\beta_{w_N}(\lambda)|)
\leq \sum_N\eta_N<\infty,$$ that is, the product 
$\prod_N\prod_{\lambda\in\Lambda_N}b_{\beta_{w_N}(\lambda)}^{k_\lambda} = 
\prod_N\zeta_NB_N\circ\beta_{w_N}$ converges. \end{proof}

The following lemma is a particular case of [SFBK, VI.1.6].
Also, it can be proved in a straightforward manner.

\begin{lemma}\label{lem32} Let $w\in\mathbb  D$. Put 
$$U_w\colon H^2\to H^2,\  \  (U_wh)(z) = 
\frac{(1-|w|^2)^{1/2}}{1-\overline wz}(h\circ\beta_w)(z), \  \
z\in\mathbb  D, \ \ h\in H^2, $$
 where $\beta_w$ is defined in \eqref{13}. Then $U_w$ is unitary, 
$U_w = U_w^{-1}$, 
$U_w\mathcal K_{\theta\circ\beta_w} =\mathcal K_\theta$
and $U_w|_{\mathcal K_{\theta\circ\beta_w}} T_{\theta\circ\beta_w} = 
\beta_w(T_\theta)U_w|_{\mathcal K_{\theta\circ\beta_w}}$
 for every inner function $\theta\in H^\infty$, where the space 
 $\mathcal K_\theta$ and the operator 
 $T_\theta$ are defined in \eqref{17}.\end{lemma}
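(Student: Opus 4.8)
The plan is to verify the four assertions about $U_w$ by direct computation, relying on the change-of-variables identities for $\beta_w$ recorded just after \eqref{13} and on the fact that $\beta_w\circ\beta_w=\mathrm{id}$. First I would check that $U_w$ is an isometry: for $h\in H^2$ one computes $\|U_wh\|^2$ as a boundary integral $\frac{1}{2\pi}\int_{\mathbb T}\frac{1-|w|^2}{|1-\overline w z|^2}|h(\beta_w(z))|^2\,|dz|$, and the substitution $z=\beta_w(\zeta)$ (so that $\zeta=\beta_w(z)$) turns this into $\frac{1}{2\pi}\int_{\mathbb T}|h(\zeta)|^2\,|d\zeta|=\|h\|^2$, since the factor $\frac{1-|w|^2}{|1-\overline w z|^2}$ is exactly the modulus of the derivative $|\beta_w'(z)|$ on $\mathbb T$ (the Poisson-kernel factor), and the map $z\mapsto\beta_w(z)$ is an automorphism of $\mathbb T$. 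Surjectivity will follow once I show $U_w\circ U_w=I$, so that $U_w$ is its own two-sided inverse; for this I expand $(U_w(U_wh))(z)$, use $\beta_w\circ\beta_w(z)=z$, and check that the two Szegő-type factors $\frac{(1-|w|^2)^{1/2}}{1-\overline w z}$ and $\frac{(1-|w|^2)^{1/2}}{1-\overline w\beta_w(z)}$ multiply to $1$ — this is the elementary identity $(1-\overline w z)(1-\overline w\beta_w(z))=1-|w|^2$, which drops out of the definition \eqref{13} of $\beta_w$. Thus $U_w$ is unitary and $U_w=U_w^{-1}$.

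Next I would identify the intertwining relation on all of $H^2$ before restricting. Let $M_\chi$ denote multiplication by the independent variable (i.e.\ the shift $S$), and more generally for $\varphi\in H^\infty$ let $M_\varphi$ be multiplication by $\varphi$ on $H^2$. The key computation is $U_w M_\varphi U_w = M_{\varphi\circ\beta_w}$: applying the left side to $h$ and using $U_w U_w = I$ on the ``inner'' copy, one gets $(U_w M_\varphi U_w h)(z)$ equal to $\varphi(\beta_w(z))$ times $(U_w U_w h)(z)=h(z)$, which is $((\varphi\circ\beta_w)\cdot h)(z)$. In particular, taking $\varphi=\chi$ gives $U_w S U_w = M_{\beta_w}=\beta_w(S)$, equivalently $U_w S = \beta_w(S)U_w$. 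This is the ``global'' version on $H^2$ of the asserted relation.

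Then I would pass to the model spaces. For an inner function $\theta$, multiplication by $\theta$ carries $H^2$ onto $\theta H^2$, and one has $U_w(\theta\circ\beta_w) H^2 = U_w M_{\theta\circ\beta_w}H^2 = M_\theta U_w H^2 = \theta H^2$ using the identity of the previous paragraph with $\varphi=\theta$ together with $U_w H^2=H^2$; taking orthogonal complements inside $H^2$ (and using that $U_w$ is unitary) gives $U_w\mathcal K_{\theta\circ\beta_w}=\mathcal K_\theta$, which is the third assertion. For the last assertion, recall $T_\eta=P_{\mathcal K_\eta}S|_{\mathcal K_\eta}$ and $\beta_w(T_\theta)=P_{\mathcal K_\theta}\beta_w(S)|_{\mathcal K_\theta}$ (the functional calculus of the compression equals the compression of the functional calculus, since $\mathcal K_\theta$ is semi-invariant for $S$); then for $h\in\mathcal K_{\theta\circ\beta_w}$ one writes
\[
U_w T_{\theta\circ\beta_w}h
= U_w P_{\mathcal K_{\theta\circ\beta_w}} S h
= P_{\mathcal K_\theta} U_w S h
= P_{\mathcal K_\theta}\beta_w(S) U_w h
= \beta_w(T_\theta) U_w h ,
\]
where the second equality uses that $U_w$ maps $\mathcal K_{\theta\circ\beta_w}$ onto $\mathcal K_\theta$ (hence intertwines the two orthogonal projections) and the third uses $U_w S=\beta_w(S)U_w$ from the previous paragraph. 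This gives exactly $U_w|_{\mathcal K_{\theta\circ\beta_w}}T_{\theta\circ\beta_w}=\beta_w(T_\theta)U_w|_{\mathcal K_{\theta\circ\beta_w}}$.

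The only genuinely delicate point is the unitarity of $U_w$, and within that the honest justification that the weight $\frac{1-|w|^2}{|1-\overline w z|^2}$ on $\mathbb T$ is precisely the Jacobian of the boundary automorphism $z\mapsto\beta_w(z)$; everything else is bookkeeping with the algebraic identity $(1-\overline w z)(1-\overline w\beta_w(z))=1-|w|^2$ and the composition rule $\beta_w\circ\beta_w=\mathrm{id}$. Since the paper notes this is a special case of \cite[VI.1.6]{SFBK}, I would either cite that for the unitarity and do only the intertwining computations, or — as the ``straightforward'' alternative the paper mentions — include the short boundary-integral change of variables sketched above.
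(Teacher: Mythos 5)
Your proposal is correct and complete; the paper itself gives no proof of this lemma (it only cites [SFBK, VI.1.6] and remarks that a straightforward direct verification is possible), and your argument is precisely that direct verification. All the key identities check out — $(1-\overline w z)(1-\overline w\beta_w(z))=1-|w|^2$, the Jacobian computation $|\beta_w'(z)|=\frac{1-|w|^2}{|1-\overline wz|^2}$ on $\mathbb T$, the relation $U_wM_\varphi U_w=M_{\varphi\circ\beta_w}$, and the passage to compressions via $U_wP_{\mathcal K_{\theta\circ\beta_w}}=P_{\mathcal K_\theta}U_w$ — so nothing further is needed.
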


The following lemma can be easily proved by induction. Therefore, 
its proof is omitted.

\begin{lemma}\label{lem33} There exist functions 
$c_{nk}\colon\mathbb  D\to\mathbb  C$
such that $\sup_{\mathbb  D}|c_{nk}|<\infty$ for every $n\geq 1$, 
$0\leq k\leq n-1$,
and for every analytic function $g\colon\mathbb  D\to\mathbb  C$ and every 
$w\in\mathbb  D$
 \begin{equation} \label{32} (g\circ\beta_w)^{(n)}(z) = \sum_{k=0}^{n-1}
g^{(n-k)}(\beta_w(z))\frac{c_{nk}(w)}{(1-\overline wz)^{2n-k}}.  \end{equation} 
Namely, $c_{10}(w)=|w|^2-1$, $c_{n+1,0}(w)=(|w|^2-1)c_{n0}(w)$, 
$$c_{n+1,k}(w)=(|w|^2-1)c_{nk}(w)+(2n-k+1)\overline w c_{n,k-1}(w), 
\ \ \ 1\leq k\leq n-1,$$
$c_{n+1,n}(w)= (n+1)\overline w c_{n,n-1}(w)$. \end{lemma}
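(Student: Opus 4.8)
The plan is to prove the formula \eqref{32} by induction on $n$, since the claim packages a Faà di Bruno-type expansion of the derivatives of $g\circ\beta_w$ together with explicit recursions for the coefficient functions $c_{nk}$. First I would verify the base case $n=1$: a direct differentiation gives $(g\circ\beta_w)'(z) = g'(\beta_w(z))\cdot\beta_w'(z)$, and since $\beta_w(z) = (w-z)/(1-\overline w z)$ one computes $\beta_w'(z) = (|w|^2-1)/(1-\overline w z)^2$; this matches the right-hand side of \eqref{32} for $n=1$ with the single term $k=0$, $c_{10}(w) = |w|^2 - 1$, and $2n-k = 2$.

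For the inductive step, assume \eqref{32} holds for some $n\geq 1$, and differentiate both sides once more in $z$. On the right-hand side, each summand $g^{(n-k)}(\beta_w(z))\,c_{nk}(w)(1-\overline w z)^{-(2n-k)}$ produces, by the product and chain rule, two contributions: one from differentiating $g^{(n-k)}(\beta_w(z))$, which brings down $g^{(n-k+1)}(\beta_w(z))\cdot\beta_w'(z) = g^{(n+1-k)}(\beta_w(z))(|w|^2-1)(1-\overline w z)^{-2}$, hence a term with denominator power $(2n-k)+2 = 2(n+1)-k$; and one from differentiating $(1-\overline w z)^{-(2n-k)}$, which brings down a factor $(2n-k)\overline w (1-\overline w z)^{-(2n-k+1)}$, hence a term in $g^{(n-k)}(\beta_w(z))$ with denominator power $2n-k+1 = 2(n+1)-(k+1)$. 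Re-indexing and collecting the coefficient of $g^{(n+1-k)}(\beta_w(z))(1-\overline w z)^{-(2(n+1)-k)}$ then yields exactly the stated recursions: the boundary terms $k=0$ and $k=n$ receive only one contribution (giving $c_{n+1,0}(w) = (|w|^2-1)c_{n0}(w)$ and $c_{n+1,n}(w) = (n+1)\overline w\, c_{n,n-1}(w)$, noting $2n-(n-1)+1 = n+2$ need only be tracked as an index), while for $1\leq k\leq n-1$ both contributions combine into $c_{n+1,k}(w) = (|w|^2-1)c_{nk}(w) + (2n-k+1)\overline w\, c_{n,k-1}(w)$. Finally, the uniform bound $\sup_{\mathbb D}|c_{nk}| < \infty$ for each fixed $n,k$ follows immediately from the recursions by induction, since $|w|^2-1$, $\overline w$, and the numerical coefficients are all bounded on $\mathbb D$, and only finitely many such operations are involved for each $(n,k)$.

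The only place requiring care — and the main obstacle, though it is genuinely routine — is the bookkeeping when differentiating the right-hand side: one must correctly track how the index $k$ shifts in each of the two contributions and match the exponent $2n-k$ of $(1-\overline w z)^{-1}$ against the target exponent $2(n+1)-k$ after re-indexing, so that the coefficient of a given $g^{(m)}(\beta_w(z))(1-\overline w z)^{-2(n+1)+k}$ is assembled from precisely the right pair of old terms. Since the paper states this lemma can be proved by induction and omits the proof, I would present only the base case and the structure of the inductive step as above, leaving the elementary index-matching verification to the reader.
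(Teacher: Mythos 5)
Your proposal is correct and is exactly the induction the paper has in mind (the paper omits the proof, stating only that the lemma "can be easily proved by induction"): the base case via $\beta_w'(z)=(|w|^2-1)/(1-\overline wz)^2$, the two contributions from the product/chain rule with the re-indexing $k\mapsto k+1$ in the second, and the boundedness of $c_{nk}$ read off from the recursions. No gaps.
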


We need the following lemmas.

\begin{lemma}\label{lem34} There exists an outer function $g\in H^\infty$
such that $g^{(n)}(r)\to 0$ when $r\in(0,1)$, $r\to 1$, for every $n\geq 0$. \end{lemma}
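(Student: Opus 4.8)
The plan is to construct $g$ explicitly as a singular inner-type exponential that is flat at the boundary point $1$ but has decaying radial derivatives there. A natural candidate is
\[
g(z) = \exp\!\Bigl(-\Bigl(\frac{1+z}{1-z}\Bigr)^{\alpha}\Bigr), \qquad z\in\mathbb D,
\]
for a fixed $\alpha\in(0,1)$; here the branch of the power is chosen so that $\bigl(\frac{1+z}{1-z}\bigr)^\alpha$ maps $\mathbb D$ into the right half-plane (indeed into the sector $|\arg w|<\alpha\pi/2$), since $\frac{1+z}{1-z}$ maps $\mathbb D$ onto the right half-plane. Consequently $\operatorname{Re}\bigl(\frac{1+z}{1-z}\bigr)^\alpha>0$ on $\mathbb D$, so $|g(z)|<1$ and $g\in H^\infty$; moreover $g$ has no zeros in $\mathbb D$ and $|g|=1$ a.e.\ on $\mathbb T$ (the exponent is purely imaginary a.e.\ on $\mathbb T\setminus\{1\}$), so $g$ is in fact inner, hence trivially outer is \emph{not} what we get — instead I would note that since $g$ has no zeros and $\log|g|\in L^1(\mathbb T)$ with $|g|=1$ a.e., $g$ is the product of a unimodular constant and a singular inner function; to obtain an \emph{outer} function one replaces $g$ by, say, $g_{\mathrm{out}}(z)=\exp\!\bigl(-\bigl(\frac{1+z}{1-z}\bigr)^\alpha\cdot c\bigr)$ is still inner, so instead I would take the outer function with the same modulus, or more simply use $h(z)=g(z)+$ a correction; cleanest is to take
\[
g(z) = \exp\!\Bigl(\frac{1+z}{1-z} - \Bigl(\frac{1+z}{1-z}\Bigr)^{\alpha}\Bigr)^{-1}
\]
— no. The right move: let $\varphi(z)=\bigl(\tfrac{1+z}{1-z}\bigr)^\alpha$, which has positive real part, so $e^{-\varphi}$ is bounded with positive boundary modulus a.e.; since $\operatorname{Re}\varphi>0$ on $\mathbb D$ and is finite (nonzero) a.e.\ on $\mathbb T$, $\log|e^{-\varphi}| = -\operatorname{Re}\varphi$ defines an outer function $g$ with $\log g = -\varphi$ up to an imaginary constant, and $-\varphi$ is already analytic in $\mathbb D$ with $\operatorname{Re}(-\varphi)<0$, so $g=e^{-\varphi}$ \emph{is} outer because an analytic function of the form $e^{u}$ with $u\in H^\infty$... no, $\varphi\notin H^\infty$. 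The correct statement is: $g=e^{-\varphi}$ is outer iff $-\operatorname{Re}\varphi$ is the Poisson integral of its boundary values, which holds since $\operatorname{Re}\varphi$ is a positive harmonic function whose boundary measure has no singular part at $1$ precisely because $\alpha<1$ (for $\alpha=1$ one gets the atom, giving the singular inner function $e^{-(1+z)/(1-z)}$; for $\alpha<1$ the harmonic majorant behaves like $(1-|z|)^{\alpha-1}\to\infty$ but integrably against Poisson, so no atom). So the \textbf{first step} is to verify $\alpha<1 \implies g=e^{-\varphi}$ is outer, via the representation $\operatorname{Re}\varphi(re^{i\theta}) = \int P_r(\theta-t)\,d\mu(t)$ with $\mu$ absolutely continuous; equivalently, check $\lim_{r\to1}(1-r)\operatorname{Re}\varphi(r)=0$, which rules out a point mass.

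\textbf{Second step}: prove $g^{(n)}(r)\to0$ as $r\to1^-$ for every $n\ge0$. By Faà di Bruno / induction, $g^{(n)}(z)$ is $g(z)$ times a polynomial in the derivatives $\varphi^{(j)}(z)$, $1\le j\le n$, and $\varphi^{(j)}(z)$ is a constant times $\bigl(\tfrac{1+z}{1-z}\bigr)^{\alpha-j}\cdot(1-z)^{-2j}\cdot(\text{bounded})$, so along the real axis $|\varphi^{(j)}(r)| = O\bigl((1-r)^{\alpha-j}(1-r)^{-2j}\bigr)=O\bigl((1-r)^{\alpha-3j}\bigr)$ — wait, more carefully $\tfrac{1+r}{1-r}\asymp(1-r)^{-1}$ so $\bigl(\tfrac{1+r}{1-r}\bigr)^{\alpha-j}\asymp(1-r)^{j-\alpha}$ and $\tfrac{d}{dr}\tfrac{1+r}{1-r}=\tfrac{2}{(1-r)^2}$, giving by induction $\varphi^{(j)}(r)=O\bigl((1-r)^{\alpha-2j}\bigr)$. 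Hence $|g^{(n)}(r)|\le |g(r)|\cdot C_n (1-r)^{-M_n}$ for some $M_n$, while $|g(r)|=\exp(-\operatorname{Re}\varphi(r))=\exp\bigl(-c(1-r)^{-\alpha}(1+o(1))\bigr)$ since $\operatorname{Re}\bigl(\tfrac{1+r}{1-r}\bigr)^\alpha = \bigl(\tfrac{1+r}{1-r}\bigr)^\alpha\asymp(1-r)^{-\alpha}$ (the argument is $0$ on the real axis). Because $\alpha>0$, the exponential decay $\exp\bigl(-c(1-r)^{-\alpha}\bigr)$ beats any power $(1-r)^{-M_n}$, so the product $\to0$. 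That completes the proof.

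\textbf{Main obstacle.} The delicate point is the \emph{outerness} in the first step — one must be sure that raising $\tfrac{1+z}{1-z}$ to the power $\alpha<1$ genuinely removes the singular atom at $z=1$ that is present when $\alpha=1$. I would handle this by the criterion that a zero-free $H^\infty$ function $g$ with $\log|g|\in L^1(\mathbb T)$ is outer iff $\log|g(0)| = \frac{1}{2\pi}\int_{\mathbb T}\log|g(e^{it})|\,dt$, i.e.\ iff $-\operatorname{Re}\varphi(0) = -\frac{1}{2\pi}\int\operatorname{Re}\varphi(e^{it})\,dt$, which is Jensen's equality for the positive harmonic function $\operatorname{Re}\varphi$ and holds exactly when its Herglotz measure is absolutely continuous; the measure is $d\mu = \operatorname{Re}\varphi(e^{it})\frac{dt}{2\pi} + (\text{possible atom at }1)$, and the atom's mass is $\lim_{r\to1}(1-r)\operatorname{Re}\varphi(r)$, which is $\lim_{r\to1}(1-r)\cdot O\bigl((1-r)^{-\alpha}\bigr)=0$ since $\alpha<1$. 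Alternatively, and perhaps more cleanly for the write-up, one can cite [SFBK] for the standard fact that $\exp\bigl(-(\tfrac{1+z}{1-z})^\alpha\bigr)$ with $0<\alpha<1$ is outer (this is a classical example). The rest — the derivative estimates and the domination by the exponential — is the routine induction indicated above and presents no real difficulty.
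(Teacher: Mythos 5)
Your proposal is correct and uses exactly the paper's function $g(z)=\exp\bigl(-\bigl(\tfrac{1+z}{1-z}\bigr)^{\alpha}\bigr)$, $0<\alpha<1$, with the same two-step structure: the paper likewise proves the radial decay by an induction showing $g^{(n)}=g\cdot\sum_l a_{nl}\bigl(\tfrac{1+z}{1-z}\bigr)^{\gamma_{nl}}(1-z)^{\eta_{nl}}$ (the exponential killing all powers of $(1-r)^{-1}$), and it simply cites [Ni, Example I.A.4.3.7] for membership in $H^\infty$ and outerness, so your final Herglotz-measure/no-atom argument (or the citation you suggest) is an acceptable substitute. One remark on the discarded first attempt: the claim that $|g|=1$ a.e.\ on $\mathbb T$ is genuinely false for $\alpha<1$, since on the boundary $\tfrac{1+z}{1-z}=i\cot(\theta/2)$ is purely imaginary but its $\alpha$-th power has real part $|\cot(\theta/2)|^{\alpha}\cos(\alpha\pi/2)>0$; this is precisely why $g$ is outer rather than inner, and your eventual argument correctly reflects that.
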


\begin{proof}  Let $0<\alpha<1$, put $g(z)=\exp(-(\frac{1+z}{1-z})^\alpha)$, 
$z\in\mathbb  D$.
It is easy to prove by induction that 
$$g^{(n)}(z)=g(z)\sum_{l=1}^{\kappa(n)}a_{nl}
\Bigl(\frac{1+z}{1-z}\Bigr)^{\gamma_{nl}}(1-z)^{\eta_{nl}}, \ \ \ z\in\mathbb  D,$$
 where $\kappa(n)<\infty$, $a_{nl}$, $\gamma_{nl}$, $\eta_{nl}\in \mathbb  R$.
Therefore, $g^{(n)}(r)\to 0$ when $r\in(0,1)$, $r\to 1$, for every $n\geq 0$.
In [Ni, Example I.A.4.3.7, p.71] it is proved that $g\in H^\infty$
and $g$ is outer.\end{proof} 

\begin{lemma}\label{lem35}  Suppose  $\Lambda\subset\mathbb  D$ 
is finite, $1\leq k_\lambda<\infty$ for every $\lambda\in\Lambda$,
and $B=\prod_{\lambda\in\Lambda}b_{\lambda}^{k_\lambda}$. 
Then there exists $C>0$ which depends on $B$ such that
$$\operatorname{dist}(\varphi, B H^\infty)\leq 
C\max_{\lambda\in\Lambda, 0\leq k\leq k_\lambda-1}|\varphi^{(k)}(\lambda)|
\ \ \text{ for every } \ \varphi\in  H^\infty.$$
\end{lemma}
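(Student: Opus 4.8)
The plan is to prove the estimate by interpolation at the finitely many points of $\Lambda$ with the prescribed multiplicities, using the quotient structure of $H^\infty/BH^\infty$. First I would observe that the quotient space $H^\infty/BH^\infty$ is finite dimensional: its dimension equals $\deg B=\sum_{\lambda\in\Lambda}k_\lambda$, since $BH^\infty$ has finite codimension in $H^\infty$ (one can see this, for instance, via the isomorphism between $H^\infty/BH^\infty$ and $H(B)=\mathcal K_B$ as a space, or more elementarily by noting that two functions are congruent mod $BH^\infty$ precisely when they share the same Taylor data $\{\varphi^{(k)}(\lambda):\lambda\in\Lambda,\ 0\le k\le k_\lambda-1\}$). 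So consider the linear evaluation map
\[
E\colon H^\infty\to\mathbb C^{\deg B},\qquad
E\varphi=\bigl(\varphi^{(k)}(\lambda)\bigr)_{\lambda\in\Lambda,\,0\le k\le k_\lambda-1}.
\]
This map is bounded (the point evaluations of $\varphi$ and its derivatives at points of $\mathbb D$ are bounded functionals on $H^\infty$, with bound depending only on $\operatorname{dist}(\lambda,\mathbb T)$ and $k$), surjective (by a partial-fractions / Hermite interpolation argument — or simply because polynomials already realize all Taylor data), and its kernel is exactly $BH^\infty$.

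Next I would pass to the induced map on the quotient. Since $\ker E=BH^\infty$, $E$ factors as $E=\widetilde E\circ\pi$ where $\pi\colon H^\infty\to H^\infty/BH^\infty$ is the quotient map and $\widetilde E\colon H^\infty/BH^\infty\to\mathbb C^{\deg B}$ is a linear bijection between finite-dimensional spaces, hence a bounded isomorphism with bounded inverse. Therefore there is a constant $C=C(B)>0$ with
\[
\|\pi\varphi\|_{H^\infty/BH^\infty}\le C\,\|E\varphi\|_\infty
= C\max_{\lambda\in\Lambda,\,0\le k\le k_\lambda-1}|\varphi^{(k)}(\lambda)|,
\]
where I use the sup-norm on $\mathbb C^{\deg B}$; switching norms on the finite-dimensional target only changes $C$. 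Since by definition $\|\pi\varphi\|_{H^\infty/BH^\infty}=\operatorname{dist}(\varphi,BH^\infty)$, this is exactly the claimed inequality.

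The only points needing a little care are the three properties of $E$. Boundedness of each coordinate functional $\varphi\mapsto\varphi^{(k)}(\lambda)$ follows from the Cauchy estimate $|\varphi^{(k)}(\lambda)|\le k!\,\rho^{-k}\sup_{|z-\lambda|\le\rho}|\varphi(z)|\le k!\,\rho^{-k}\|\varphi\|_\infty$ with $\rho=(1-|\lambda|)/2$, say. That $\ker E=BH^\infty$ is the standard factorization fact: $\varphi$ vanishes to order $\ge k_\lambda$ at each $\lambda\in\Lambda$ iff $\varphi/B$ is still analytic and bounded in $\mathbb D$ (boundedness because $B$ is bounded below near $\mathbb T$ and the finitely many zeros are cancelled), i.e. iff $\varphi\in BH^\infty$. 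Surjectivity of $E$ can be taken for granted since finite Hermite interpolation data are attained by polynomials; alternatively it is automatic once one knows $\dim H^\infty/BH^\infty\ge\deg B$. I expect no real obstacle here — the main point is simply to recognize the statement as the assertion that evaluation of Taylor data induces an isomorphism $H^\infty/BH^\infty\cong\mathbb C^{\deg B}$, after which the inequality is just the boundedness of the inverse of that isomorphism, with $C$ its norm.
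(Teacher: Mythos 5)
Your proof is correct, but it goes by a genuinely different route than the paper. The paper first invokes the commutant-lifting/Nehari identity $\operatorname{dist}(\varphi,BH^\infty)=\|\varphi(T_B)\|$, where $T_B$ is the model operator on $\mathcal K_B=H^2\ominus BH^2$, and then bounds $\|\varphi(T_B)\|$ explicitly: in the basis $u_{\lambda k}=B(z)/(z-\lambda)^k$ of $\mathcal K_B$ (orthonormalized by an invertible $X$), the operator $\varphi(T_B)$ acts triangularly with entries $\varphi^{(l)}(\lambda)/l!$, which gives the bound with $C=\|X\|\|X^{-1}\|C_1$. You instead avoid operator theory entirely: you note that $BH^\infty$ is closed, that the Taylor-data evaluation map $E$ has kernel exactly $BH^\infty$ and is onto $\mathbb C^{\deg B}$, and then appeal to the automatic boundedness of the inverse of the induced bijection $\widetilde E$ on the finite-dimensional quotient, whose norm is by definition the distance. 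Your key facts (closedness of $BH^\infty$, $\ker E=BH^\infty$ via dividing out the finite Blaschke product, surjectivity via Hermite interpolation) are all justified correctly, so the argument stands. What the paper's route buys is a constant tied to concrete data ($\|X\|\|X^{-1}\|$ and the size of the Taylor matrix) and coherence with the model-space formalism used elsewhere in the paper; what your route buys is brevity and the complete avoidance of Nehari's theorem and of $T_B$, at the price of a purely existential constant coming from a soft finite-dimensionality argument. Since the lemma only asserts existence of some $C=C(B)$, this loss is immaterial here.
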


\begin{proof}  Recall that for the Blaschke product $B$ the space $\mathcal K_B$ 
and the operator $T_B$ on $\mathcal K_B$ are defined by \eqref{17}. 
Put $u_{\lambda k}(z)=\frac{B(z)}{(z-\lambda)^k}$, $z\in\mathbb  D$,
$\lambda\in\Lambda$, $1\leq k\leq k_\lambda$. Then 
$\{u_{\lambda k}\}_{\lambda\in\Lambda,1\leq k\leq k_\lambda}$
is a basis of $\mathcal K_B$, and, since $\dim \mathcal K_B<\infty$,
there exist an invertible operator $X$ on $\mathcal K_B$  and an orthonormal 
basis $\{e_{\lambda k}\}_{\lambda\in\Lambda,1\leq k\leq k_\lambda}$ of 
$\mathcal K_B$ such that $Xu_{\lambda k}=e_{\lambda k}$, 
$\lambda\in\Lambda$, $1\leq k\leq k_\lambda$.
Let $\varphi \in H^\infty$. Then
$$ X\varphi(T_B)X^{-1}e_{\lambda k}=
\sum_{l=0}^{k-1}\frac{\varphi^{(l)}(\lambda)}{l!}e_{\lambda, k-l}, \ \ 
\lambda\in\Lambda,  \ \ 1\leq k\leq k_\lambda,$$ therefore,
there exists $C_1>0$ which depends on $\sum_{\lambda\in\Lambda}k_\lambda$ only  
such that $\|X\varphi(T_B)X^{-1}\|\leq
C_1\max_{\lambda\in\Lambda, 0\leq k\leq k_\lambda-1}|\varphi^{(k)}(\lambda)|$.
Thus, 
$$\|\varphi(T_B)\|\leq \|X\|\|X^{-1}\|
C_1\max_{\lambda\in\Lambda, 0\leq k\leq k_\lambda-1}|\varphi^{(k)}(\lambda)|.$$
By Nehari's theorem, 
$\operatorname{dist}(\varphi,B H^\infty)=\|\varphi(T_B)\|$.\end{proof} 

\begin{lemma}\label{lem36}  Suppose $B$ is a finite Blaschke product,
and $a,\gamma\in (0,1)$. Then there exists $r\in (0,1)$
such that $|(B\circ\beta_w)(z)|\geq\gamma$ for 
$z$, $w\in\mathbb  D$, $|z|\leq a$, $r\leq |w|$, where $\beta_w$ is defined in \eqref{13}.\end{lemma}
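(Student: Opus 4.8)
The plan is to reduce the problem to a pointwise lower bound for a single Blaschke factor and then exploit that the zeros of $B\circ\beta_w$ are pushed out to $\mathbb T$ as $|w|\to1$. First I would write $B=\prod_{\lambda\in\Lambda}b_\lambda^{k_\lambda}$ with $\Lambda\subset\mathbb D$ finite and $1\le k_\lambda<\infty$. By \eqref{31} we have $b_\lambda\circ\beta_w=\zeta_{w,\lambda}b_{\beta_w(\lambda)}$ with $\zeta_{w,\lambda}\in\mathbb T$, hence
$$|(B\circ\beta_w)(z)|=\prod_{\lambda\in\Lambda}\bigl|b_{\beta_w(\lambda)}(z)\bigr|^{k_\lambda},\qquad z,w\in\mathbb D .$$
So it is enough to bound each factor $\bigl|b_{\beta_w(\lambda)}(z)\bigr|$ from below, uniformly for $|z|\le a$, once $|w|$ is sufficiently close to $1$.

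For the single-factor estimate I would use the identity
$$1-|b_\mu(z)|^2=\frac{(1-|\mu|^2)(1-|z|^2)}{|1-\overline\mu z|^2},\qquad \mu,z\in\mathbb D,$$
which follows at once from $|b_\mu(z)|=\bigl|\tfrac{\mu-z}{1-\overline\mu z}\bigr|$ and a direct computation of $|1-\overline\mu z|^2-|\mu-z|^2=(1-|\mu|^2)(1-|z|^2)$. Since $|1-\overline\mu z|\ge 1-|z|\ge 1-a$ and $1-|z|^2\le1$, this gives, for $|z|\le a$,
$$1-|b_\mu(z)|^2\le\frac{1-|\mu|^2}{(1-a)^2}.$$
Consequently, given $\varepsilon\in(0,1)$, if $|\mu|$ is close enough to $1$ that $1-|\mu|^2\le\varepsilon(1-a)^2$, then $|b_\mu(z)|\ge(1-\varepsilon)^{1/2}$ for every $z$ with $|z|\le a$.

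To finish, I would fix $\varepsilon\in(0,1)$ at the outset so small that $(1-\varepsilon)^{(\sum_{\lambda\in\Lambda}k_\lambda)/2}\ge\gamma$ (possible because $\sum_\lambda k_\lambda=\deg B$ is a fixed finite number). Since $|\beta_w(\lambda)|\to1$ as $|w|\to1$ for each fixed $\lambda\in\mathbb D$ and $\Lambda$ is finite — exactly the fact already used in the proof of Lemma \ref{lem31} — there is $r\in(0,1)$ such that $1-|\beta_w(\lambda)|^2\le\varepsilon(1-a)^2$ for all $\lambda\in\Lambda$ and all $w\in\mathbb D$ with $|w|\ge r$. Then for $|w|\ge r$ and $|z|\le a$,
$$|(B\circ\beta_w)(z)|=\prod_{\lambda\in\Lambda}\bigl|b_{\beta_w(\lambda)}(z)\bigr|^{k_\lambda}\ge(1-\varepsilon)^{(\sum_{\lambda\in\Lambda}k_\lambda)/2}\ge\gamma,$$
as required. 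There is no real obstacle here; the only point needing a little care is to choose $\varepsilon$ (equivalently, how far the zeros $\beta_w(\lambda)$ must be driven toward $\mathbb T$) in terms of the degree of $B$ before choosing $r$, so that the product of the $k_\lambda$-th powers still dominates $\gamma$ even when $\gamma$ is close to $1$.
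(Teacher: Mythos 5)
Your proof is correct and follows essentially the same route as the paper: factor $B$ into Blaschke factors, use \eqref{31} and the identity $1-|b_\mu(z)|^2=(1-|\mu|^2)(1-|z|^2)/|1-\overline\mu z|^2$ to get a per-factor lower bound uniform on $\{|z|\le a\}$ once $|\beta_w(\lambda)|$ is close to $1$, and then take the maximum of finitely many thresholds $r_\lambda$. The only cosmetic difference is that you make the estimate quantitative via the bound $|1-\overline\mu z|\ge 1-a$ and a pre-chosen $\varepsilon$ with $(1-\varepsilon)^{\kappa/2}\ge\gamma$, whereas the paper aims each factor directly at $\gamma^{1/\kappa}$.
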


\begin{proof}  Set 
$B=\prod_{\lambda\in\Lambda}b_\lambda^{k_\lambda}$, 
where $\Lambda\subset\mathbb  D$ is a finite set, and
$1\leq k_\lambda<\infty$ for every $\lambda\in\Lambda$.
Put $\kappa=\sum_{\lambda\in\Lambda}k_\lambda$. 

It is easy to see from \eqref{31} that 
$$1-|(b_\lambda\circ\beta_w)(z)|^2 = 
\frac{(1-|\beta_w(\lambda)|^2)(1-|z|^2)}{|1-\overline\beta_w(\lambda)z|^2}.$$
From this equality and  the relations 
$ |\beta_w(\lambda)|\to 1$ when $w\in\mathbb  D$, $|w|\to 1$,
we have that
$|(b_\lambda\circ\beta_w)(z)| \to 1$ when $w\in\mathbb  D$, $|w|\to 1$ 
uniformly on $\{z: |z|\leq a\}$ for every $\lambda\in\mathbb  D$.
Therefore, for every $\lambda\in\mathbb  D$ there exists 
$r_\lambda\in (0,1)$ such that 
$|(b_\lambda\circ\beta_w)(z)| \geq  \gamma^{1/\kappa}$ 
for $z$, $w\in\mathbb  D$, $|z|\leq a$, $r_\lambda\leq |w|$.
Put $r=\max_{\lambda\in\Lambda}r_\lambda$. Then
$$|(B\circ\beta_w)(z)| = 
\prod_{\lambda\in\Lambda}|(b_\lambda\circ\beta_w)(z)|^{k_\lambda}\geq
\prod_{\lambda\in\Lambda}(\gamma^{1/\kappa})^{k_\lambda}=\gamma$$
 for $z$, $w\in\mathbb  D$, $|z|\leq a$, $r\leq |w|$. \end{proof} 

\begin{proposition}\label{prop37}  Suppose $C>0$, $\Lambda\subset (0,1)$ 
is finite, $1\leq k_\lambda<\infty$ for every $\lambda\in\Lambda$,
$B=\prod_{\lambda\in\Lambda}b_{\lambda}^{k_\lambda}$,
$\varphi$, $g \in H^\infty$, $\varphi(\lambda)\neq 0$ 
for every $\lambda\in\Lambda$, 
and $g^{(k)}(r)\to 0$ when 
$r\in(0,1)$, $r\to 1$, for every $k\geq 0$.
Then there exists $r_0$, $0<r_0<1$, such that
for every $w\in[r_0,1)$ there exists 
$f \in H^\infty$ such that $g\circ\beta_w -f\varphi\in BH^\infty$
and $\operatorname{dist}(f,BH^\infty)\leq C$, where $\beta_w$ is defined in \eqref{13}.
\end{proposition}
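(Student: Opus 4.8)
The plan is to reduce the claim to an interpolation problem on the finite set $\Lambda$, using Lemma \ref{lem35} to control $\operatorname{dist}(f,BH^\infty)$ through finitely many Taylor coefficients at the points of $\Lambda$. The condition $g\circ\beta_w-f\varphi\in BH^\infty$ means exactly that, at each $\lambda\in\Lambda$, the function $f\varphi$ agrees with $g\circ\beta_w$ up to order $k_\lambda-1$. Since $\varphi(\lambda)\neq 0$ for every $\lambda\in\Lambda$, the germ of $\varphi$ at $\lambda$ is invertible in the local ring of Taylor expansions modulo $(z-\lambda)^{k_\lambda}$, so there is a unique polynomial prescription for the jet of $f$ at each $\lambda$, obtained by dividing the jet of $g\circ\beta_w$ by the jet of $\varphi$. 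Concretely, one can take $f$ to be any fixed $H^\infty$ function (e.g. a polynomial) whose derivatives $f^{(k)}(\lambda)$, $0\le k\le k_\lambda-1$, $\lambda\in\Lambda$, equal these prescribed values; then $g\circ\beta_w-f\varphi$ has vanishing jets of the required orders at each $\lambda$, hence lies in $BH^\infty$.

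With that $f$ in hand, Lemma \ref{lem35} gives $\operatorname{dist}(f,BH^\infty)\le C_B\max_{\lambda,k}|f^{(k)}(\lambda)|$ for a constant $C_B$ depending only on $B$, so it suffices to show the quantities $|f^{(k)}(\lambda)|$ can be made smaller than $C/C_B$ uniformly in $w$ once $w$ is close enough to $1$. The derivatives $f^{(k)}(\lambda)$ are, by the Leibniz/division formula just described, universal polynomial expressions in the numbers $\varphi^{(j)}(\lambda)$, $0\le j\le k_\lambda-1$ (and $1/\varphi(\lambda)$), and in the numbers $(g\circ\beta_w)^{(j)}(\lambda)$, $0\le j\le k_\lambda-1$. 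The first group is fixed (independent of $w$), so the whole point is that $(g\circ\beta_w)^{(j)}(\lambda)\to 0$ as $w\to 1$, for each fixed $\lambda\in\Lambda\subset(0,1)$ and each $j\le k_\lambda-1$. This is where Lemma \ref{lem33} enters: by \eqref{32},
\[
(g\circ\beta_w)^{(j)}(\lambda)=\sum_{l=0}^{j-1}g^{(j-l)}(\beta_w(\lambda))\frac{c_{jl}(w)}{(1-\overline w\lambda)^{2j-l}},
\]
and together with the $l=0,\ldots$ bookkeeping I should include the $j=0$ term $g(\beta_w(\lambda))$ separately. Since $\lambda\in(0,1)$ and $w\to 1^-$, we have $\beta_w(\lambda)=\frac{w-\lambda}{1-w\lambda}\to 1$ along $(0,1)$, so by hypothesis $g^{(m)}(\beta_w(\lambda))\to 0$ for every $m\ge 0$; the coefficients $c_{jl}(w)$ are bounded ($\sup_{\mathbb D}|c_{jl}|<\infty$) and $|1-\overline w\lambda|=|1-w\lambda|\ge 1-\lambda>0$ stays bounded away from $0$ uniformly in $w$, since $\lambda\in(0,1)$ is fixed. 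Hence each term, and thus $(g\circ\beta_w)^{(j)}(\lambda)$, tends to $0$ as $w\to 1^-$.

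Combining: there is $r_0\in(0,1)$ such that for all $w\in[r_0,1)$ and all $\lambda\in\Lambda$, $0\le k\le k_\lambda-1$, the (finitely many) prescribed jet values of $f$ satisfy $|f^{(k)}(\lambda)|\le C/C_B$ (take $r_0$ to be the maximum over the finitely many $(\lambda,k)$ of the thresholds produced above). Then by Lemma \ref{lem35}, $\operatorname{dist}(f,BH^\infty)\le C$, and $g\circ\beta_w-f\varphi\in BH^\infty$ by construction, which is the assertion. The main obstacle, such as it is, is purely organizational: writing the jet-division cleanly so that the dependence of $f^{(k)}(\lambda)$ on the data is transparently ``fixed coefficients from $\varphi$ times derivatives of $g\circ\beta_w$'', and then invoking Lemma \ref{lem33} and the fact $\beta_w(\lambda)\to 1$ to kill the $w$-dependent factors; no genuinely hard estimate is needed because $\Lambda$ is finite and bounded away from $\mathbb T$.
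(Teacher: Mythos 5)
Your proposal is correct and follows essentially the same route as the paper: reduce membership in $BH^\infty$ to matching jets at the finitely many zeros of $B$, invert the (lower-triangular, by $\varphi(\lambda)\neq 0$) Leibniz system to prescribe the jet of $f$, bound $\operatorname{dist}(f,BH^\infty)$ via Lemma \ref{lem35}, and use Lemma \ref{lem33} together with $\beta_w(\lambda)\to 1$ to make $(g\circ\beta_w)^{(n)}(\lambda)$, and hence the jet of $f$, small for $w$ near $1$. The only cosmetic difference is that you phrase the jet division in local-ring language where the paper writes out the matrix $\{\varphi_{\lambda nk}\}$ explicitly.
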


\begin{proof}  We have $g\circ\beta_w -f\varphi\in BH^\infty$
if and only if 
 \begin{equation} \label{33} (g\circ\beta_w)^{(n)}(\lambda)=(f\varphi)^{(n)}(\lambda) \ \ 
\text{ for every } \ \lambda\in\Lambda, \ 0\leq n\leq k_\lambda-1.  \end{equation} 
Put $$\varphi_{\lambda n k}=\frac{n!}{k!(n-k)!}\varphi^{(n-k)}(\lambda), \ \ \
0\leq k\leq n, \ \ 0\leq k\leq  k_\lambda-1,$$
$$ \varphi_{\lambda n k}=0, \ \ \ 0\leq n\leq k-1\leq k_\lambda-1.$$
 Then \eqref{33} is equivalent to 
 \begin{equation} \label{34} \{\varphi_{\lambda n k}\}_{n,k=0}^{k_\lambda-1}\cdot
\{f^{(k)}(\lambda)\}_{k=0}^{k_\lambda-1}=
\{(g\circ\beta_w)^{(n)}(\lambda)\}_{n=0}^{k_\lambda-1}.  \end{equation} 
Since $\{\varphi_{\lambda n k}\}_{n,k=0}^{k_\lambda-1}$ 
is a lower triangular matrix, and
$\varphi_{\lambda n n}=\varphi(\lambda)$, $0\leq n\leq k_\lambda-1$,
$\det\{\varphi_{\lambda n k}\}_{n,k=0}^{k_\lambda-1}=
\varphi(\lambda)^{k_\lambda}\neq 0$ for $\lambda\in\Lambda$,
therefore, the matrix $\{\varphi_{\lambda n k}\}_{n,k=0}^{k_\lambda-1}$
is invertible. Put $\Phi_\lambda=
(\{\varphi_{\lambda n k}\}_{n,k=0}^{k_\lambda-1})^{-1}$, then \eqref{34} is equivalent to
 \begin{equation} \label{35} \{f^{(k)}(\lambda)\}_{k=0}^{k_\lambda-1}=\Phi_\lambda\cdot 
\{(g\circ\beta_w)^{(n)}(\lambda)\}_{n=0}^{k_\lambda-1}.  \end{equation} 
Since $\Lambda$ is a finite set and $k_\lambda<\infty$, 
for every $w\in\mathbb  D$ there exists a function
$f \in H^\infty$ which satisfies \eqref{35}. 
By Lemma \ref{lem35}, there exists $C_1$ which depends on $B$ 
only such that $\operatorname{dist}(f, B H^\infty)\leq 
C_1\max_{\lambda\in\Lambda, 0\leq k\leq k_\lambda-1}|f^{(k)}(\lambda)|$.
Since $\Lambda\subset (0,1)$, 
$\beta_w(\lambda)\in(-1,1)$, if $w\in(0,1)$, and 
$\beta_w(\lambda)\to 1$ when $w\in(0,1)$, $w\to 1$.
From \eqref{32} and the condition on $g$ we conclude that 
$$\|\{(g\circ\beta_w)^{(n)}(\lambda)\}_{n=0}^{k_\lambda-1}\|\to 0 \text{ \ when \ }w\to 1, \ \ w\in(0,1),$$
for every $\lambda\in\Lambda$.
Therefore, we infer from \eqref{35} that for every $\lambda\in\Lambda$
there exists $r_\lambda \in(0,1)$ such that
$\|\{f^{(k)}(\lambda)\}_{k=0}^{k_\lambda-1}\|\leq C/C_1$, 
if $w\in [r_\lambda,1)$.
Now $r_0= \max_{\lambda\in\Lambda}r_\lambda <1$
satisfies to the conclusion of the proposition. \end{proof} 

The following theorem is the main result of this section. 
The condition \eqref{36} from the theorem is 
the {\it generalized Carleson condition}, see [Ni, Theorem II.C.3.2.14, p.164].

Recall that $\beta_w$ is defined in \eqref{13}.

 \begin{theorem}\label{thm38} Suppose $C>0$, 
$g \in H^\infty$, and $g^{(k)}(r)\to 0$ when 
$r\in(0,1)$, $r\to 1$, for every $k\geq 0$.
Furthermore, suppose 
 $B_N$ are finite Blaschke products with zeros from $(0,1)$, 
$\varphi_N \in H^\infty$, $\varphi_N(\lambda)\neq 0$ 
for every $\lambda\in\mathbb  D$ such that $B_N(\lambda)=0$, 
 for every index $N$.
Then there exist $\delta>0$ and sequences of $w_N\in(0,1)$, 
of $\zeta_N\in\mathbb  T$, and 
of $\psi_N \in H^\infty$ such that 
the product $\prod_N\zeta_NB_N\circ\beta_{w_N}$ converges,
 \begin{equation} \label{36} 
\bigl|\prod_N\zeta_N(B_N\circ\beta_{w_N})(z)\bigr|\geq\delta\inf_N|(B_N\circ\beta_{w_N})(z)|
 \ \ \text{ for every } \ z\in\mathbb  D,  \end{equation} 
$g-\psi_N\cdot\varphi_N\circ\beta_{w_N}\in (B_N\circ\beta_{w_N}) H^\infty$,
 and 
$\operatorname{dist}(\psi_N, (B_N\circ\beta_{w_N}) H^\infty)\leq C$.\end{theorem}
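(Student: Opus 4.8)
The plan is to combine Lemma~\ref{lem31}, Lemma~\ref{lem36}, and Proposition~\ref{prop37} by choosing the $w_N$ simultaneously large enough to satisfy all three, and then to derive the generalized Carleson condition \eqref{36} from a convergence-of-the-product estimate. First I would apply Lemma~\ref{lem31} to the sequence $\{B_N\}_N$ of finite Blaschke products to obtain a sequence $\{r_N'\}_N$ with $0<r_N'<1$ such that, whenever $|w_N|\geq r_N'$, there is a choice of unimodular constants $\zeta_N$ making $\prod_N\zeta_N B_N\circ\beta_{w_N}$ converge; in fact, inspecting the proof of Lemma~\ref{lem31}, the $\zeta_N$ are determined by the zeros of $B_N$ (they are $\prod_{\lambda}\overline{\zeta_{w_N,\lambda}}^{k_\lambda}$), and one gets the quantitative bound $\sum_{\lambda\in\Lambda_N}k_\lambda(1-|\beta_{w_N}(\lambda)|)\leq\eta_N$ with $\sum_N\eta_N<\infty$ prescribed in advance. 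Next, for each $N$ I would apply Proposition~\ref{prop37} with the given $C$, $B=B_N$, $\varphi=\varphi_N$, and $g$ (whose hypotheses on derivatives at points of $(0,1)$ are exactly those assumed here) to obtain $r_N''\in(0,1)$ such that for every $w\in[r_N'',1)$ there is $f\in H^\infty$ with $g\circ\beta_w-f\varphi_N\in B_N H^\infty$ and $\operatorname{dist}(f,B_N H^\infty)\leq C$. Finally I would apply Lemma~\ref{lem36} to $B=B_N$ with, say, $a=1/2$ and $\gamma=1/2$, getting $r_N'''\in(0,1)$ such that $|(B_N\circ\beta_w)(z)|\geq 1/2$ whenever $|z|\leq 1/2$ and $|w|\geq r_N'''$. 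Setting $w_N\in[\max(r_N',r_N'',r_N''',1-2^{-N}),1)$ arranges all three conditions at once.

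With the $w_N$ and $\zeta_N$ fixed, the function $\psi_N$ is produced by transporting the function $f$ from Proposition~\ref{prop37} through the change of variable $\beta_{w_N}$. Precisely, since $g\circ\beta_{w_N}-f\varphi_N\in B_N H^\infty$, composing with $\beta_{w_N}$ (which is its own inverse and carries $H^\infty$ isometrically to $H^\infty$, and carries $B_N H^\infty$ to $(B_N\circ\beta_{w_N})H^\infty$) gives $g-\,(f\circ\beta_{w_N})\,(\varphi_N\circ\beta_{w_N})\in(B_N\circ\beta_{w_N})H^\infty$; so I put $\psi_N=f\circ\beta_{w_N}$. The distance estimate is preserved: $\operatorname{dist}(\psi_N,(B_N\circ\beta_{w_N})H^\infty)=\operatorname{dist}(f,B_N H^\infty)\leq C$, because composition with $\beta_{w_N}$ is an isometric automorphism of $H^\infty$ mapping the ideal $B_N H^\infty$ onto $(B_N\circ\beta_{w_N})H^\infty$. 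This disposes of the last two assertions of the theorem.

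It remains to verify the generalized Carleson condition \eqref{36}, and this is the step I expect to be the main obstacle. Write $\widetilde B_N=\zeta_N B_N\circ\beta_{w_N}$ and $P=\prod_N\widetilde B_N$ (which converges by the choice of $w_N$). The content of \eqref{36} is a lower bound $|P(z)|\geq\delta\inf_N|\widetilde B_N(z)|$ for all $z\in\mathbb D$. For each fixed $z$, let $N_0$ be an index realizing (up to a factor $2$, say) the infimum $\inf_N|\widetilde B_N(z)|$; then $|P(z)|=|\widetilde B_{N_0}(z)|\cdot\prod_{N\neq N_0}|\widetilde B_N(z)|$, so it suffices to bound $\prod_{N\neq N_0}|\widetilde B_N(z)|$ below by a fixed $\delta/2>0$ independent of $z$ and $N_0$. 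Using $|\widetilde B_N(z)|=|(B_N\circ\beta_{w_N})(z)|=\prod_{\lambda\in\Lambda_N}|b_{\beta_{w_N}(\lambda)}(z)|^{k_\lambda}$ and the standard estimate $1-|b_\mu(z)|\leq\frac{2(1-|\mu|)}{|1-\overline\mu z|}\cdot\frac{1}{1-|z|}\cdot(\text{something})$ — more cleanly, $-\log|b_\mu(z)|\leq \frac{1-|\mu|^2}{\text{(denominator)}}$ controlled by a constant times $(1-|\mu|)$ when $z$ stays away from $\mu$ — one reduces to summing $\sum_{N}\sum_{\lambda\in\Lambda_N}k_\lambda(1-|\beta_{w_N}(\lambda)|)$, which is $\leq\sum_N\eta_N<\infty$ by construction; the problematic terms are those $N$ for which some zero $\beta_{w_N}(\lambda)$ is close to $z$, but there can be at most one such $N$ other than $N_0$ once the $\eta_N$ (equivalently the separation produced by pushing the $w_N$ toward $1$) are taken small enough, and that single term is precisely the kind of factor that the infimum $\inf_N|\widetilde B_N(z)|$ on the right-hand side already absorbs. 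Making this dichotomy precise — splitting $\prod_{N\neq N_0}$ into the "far" factors (bounded below by $e^{-\sum_N\eta_N}$) and at most one "near" factor (bounded by re-selecting $N_0$ to be whichever index truly minimizes $|\widetilde B_N(z)|$) — is the heart of the argument, and it is exactly the mechanism by which the generalized Carleson condition follows from a sufficiently fast decay of the $\eta_N$, i.e.\ from pushing each $w_N$ close enough to $1$; I would shrink the $\eta_N$ (hence enlarge the threshold for $w_N$) at the outset to make this work, and take $\delta$ to be a fixed multiple of $\exp(-\sum_N\eta_N)$.
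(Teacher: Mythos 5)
Your treatment of the last two assertions (producing $\psi_N=f\circ\beta_{w_N}$ from Proposition \ref{prop37} and transporting the distance estimate through the isometric automorphism $h\mapsto h\circ\beta_{w_N}$ of $H^\infty$) coincides with the paper's argument and is correct, as is the use of Lemma \ref{lem31} for the convergence of the product and the choice of the $\zeta_N$. The gap is in your verification of \eqref{36}. You choose each $w_N$ independently, merely ``large enough'' (with a \emph{fixed} radius $a=1/2$ in Lemma \ref{lem36}), and then claim that among the factors $\widetilde B_M(z)$ with $M\neq N_0$ at most one can be small, the rest being controlled by $\exp(-\sum_N\eta_N)$. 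The bound $-\log|b_\mu(z)|\lesssim 1-|\mu|$ indeed fails exactly when $z$ is pseudohyperbolically close to $\mu$, and nothing in an independent choice of the $w_N$ prevents zeros of \emph{different} blocks from being pseudohyperbolically close to one another: all zeros $\beta_{w_N}(\lambda)$, $\lambda\in\Lambda_N$, drift toward the boundary point $1$ as $w_N\to 1$, and for unlucky (but admissible, under your scheme) choices one can have $\beta_{w_N}(\lambda)$ and $\beta_{w_M}(\lambda')$ arbitrarily close for $N\neq M$. At such a point $z$ two factors of the product are small while the right-hand side of \eqref{36} carries only \emph{one} factor of $\inf_N|\widetilde B_N(z)|$, so the inequality fails; ``re-selecting $N_0$'' cannot absorb two small factors into one infimum. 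Your parenthetical claim that smallness of the $\eta_N$ is ``equivalent'' to separation of the blocks is the unproved (and, as stated, false) step.

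The missing idea is that the $w_N$ must be chosen \emph{inductively}, each one depending on all the previous ones: this is what the paper does. Having fixed $w_1,\dots,w_N$, the partial product $\theta_N=\prod_{n=1}^N B_n\circ\beta_{w_n}$ is a finite Blaschke product, so there is a radius $a=a(N)$ with $|\theta_N(z)|\geq\gamma_{N+1}$ for $|z|\geq a$; only then is Lemma \ref{lem36} invoked, with this $a$, to pick $w_{N+1}$ so that $|(B_{N+1}\circ\beta_{w_{N+1}})(z)|\geq\gamma_{N+1}$ on $|z|\leq a$. This forces the zeros of the new block into the region where all earlier blocks are already bounded below, i.e.\ it builds in exactly the separation your argument assumes. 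The estimate $|\theta_{N+1}(z)|\geq\gamma_{N+1}\delta_N\inf_{n\leq N+1}|(B_n\circ\beta_{w_n})(z)|$ is then proved by a four-case analysis according to whether $|z|\leq a$ or $|z|\geq a$ and which block realizes the infimum, and \eqref{36} follows with $\delta=\prod_n\gamma_n$ on letting $N\to\infty$. Without this adaptive coupling of the thresholds, the generalized Carleson condition does not follow from the size of the $w_N$ alone.
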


\begin{proof}  Denote  by $\{r_{1N}\}_N$ a sequence from Lemma \ref{lem31} applied to
$\{B_N\}_N$, and by $r_{2N}$ a quantity from Proposition \ref{prop37} 
applied to $C$, $B_N$, $\varphi_N$, and $g$.
Put $r_N=\max(r_{1N},r_{2N})$. 

Let $\gamma_n\in (0,1)$ be such that $\delta =\prod_n\gamma_n$ 
converges, put $\delta_N =\prod_{n=1}^N\gamma_n$.
We construct the sequence $\{w_N\}_N$ such that $w_N\in[r_N,1)$
by induction. Let $w_1\in[r_1,1)$ be arbitrary. 
Clearly, $|(B_1\circ\beta_{w_1})(z)|\geq\delta_1|(B_1\circ\beta_{w_1})(z)|$
for every $z\in \mathbb  D$.
Suppose that $w_n\in[r_n,1)$, $1\leq n\leq N$, are such that
 \begin{equation} \label{37}|\theta_N(z)|\geq\delta_N\inf_{1\leq n\leq N}|(B_n\circ\beta_{w_n})(z)| \ \ 
\text{ for every } z\in \mathbb  D, 
\  \text{ where } \theta_N =\prod_{n=1}^N B_n\circ\beta_{w_n}. \end{equation} 
Since $\theta_N$ is a finite Blaschke product, there exists 
$a\in(0,1)$ such that 
 \begin{equation} \label{38} |\theta_N(z)|\geq\gamma_{N+1} \ \  \text{ for every } z\in \mathbb  D, \ \
 |z|\geq a.  \end{equation} 
We have from \eqref{38} that 
 \begin{equation} \label{39} \inf_{1\leq n\leq N}|(B_n\circ\beta_{w_n})(z)|\geq\gamma_{N+1} \ \ 
\ \text{ for every }\ \  z\in \mathbb  D, \ \  |z|\geq a. \end{equation} 

By Lemma \ref{lem36}, there exists $w_{N+1}\in[r_{N+1},1)$ such that 
 \begin{equation} \label{310} |(B_{N+1}\circ\beta_{w_{N+1}})(z)|\geq \gamma_{N+1}
 \ \  \text{ for every } z\in \mathbb  D, \ \
|z|\leq a.  \end{equation} 
We show that 
 \begin{equation} \label{311} |(\theta_N\cdot B_{N+1}\circ\beta_{w_{N+1}})(z)|\geq
\gamma_{N+1}\delta_N\inf_{1\leq n\leq N+1}|(B_n\circ\beta_{w_n})(z)| \ \ 
\text{ for every } z\in \mathbb  D.  \end{equation} 
We consider four cases.

\noindent {\it First case:} $a\leq |z|<1$, 
$\inf_{1\leq n\leq N+1}|(B_n\circ\beta_{w_n})(z)|=
\inf_{1\leq n\leq N}|(B_n\circ\beta_{w_n})(z)|$.
By  \eqref{37} and  \eqref{39}, 
$$|(\theta_N\cdot B_{N+1}\circ\beta_{w_{N+1}})(z)|\geq 
\gamma_{N+1}\delta_N\inf_{1\leq n\leq N}|(B_n\circ\beta_{w_n})(z)|$$
$$ = 
\gamma_{N+1}\delta_N\inf_{1\leq n\leq N+1}|(B_n\circ\beta_{w_n})(z)|.$$

\noindent {\it Second case:} $a\leq |z|<1$, 
$\inf_{1\leq n\leq N+1}|(B_n\circ\beta_{w_n})(z)|=
|(B_{N+1}\circ\beta_{w_{N+1}})(z)|$. By  \eqref{38},
$$|(\theta_N\cdot B_{N+1}\circ\beta_{w_{N+1}})(z)|
\geq\gamma_{N+1}|(B_{N+1}\circ\beta_{w_{N+1}})(z)|
= \gamma_{N+1}\!\inf_{1\leq n\leq N+1}\!|(B_n\circ\beta_{w_n})(z)|.$$

\noindent {\it Third case:} $|z|\leq a$, 
$\inf_{1\leq n\leq N+1}|(B_n\circ\beta_{w_n})(z)|=
\inf_{1\leq n\leq N}|(B_n\circ\beta_{w_n})(z)|$. By  \eqref{37} and  \eqref{310}, 
$$|(\theta_N \cdot B_{N+1}\circ\beta_{w_{N+1}})(z)|\geq
|(B_{N+1}\circ\beta_{w_{N+1}})(z)|\delta_N
\inf_{1\leq n\leq N}|(B_n\circ\beta_{w_n})(z)|$$
$$\geq
\gamma_{N+1}\delta_N
\inf_{1\leq n\leq N}|(B_n\circ\beta_{w_n})(z)|=
\gamma_{N+1}\delta_N
\inf_{1\leq n\leq N+1}|(B_n\circ\beta_{w_n})(z)|.$$

\noindent {\it Fourth case:} $|z|\leq a$, 
$\inf_{1\leq n\leq N+1}|(B_n\circ\beta_{w_n})(z)|= 
|(B_{N+1}\circ\beta_{w_{N+1}})(z)|$. 
By  \eqref{37} and  \eqref{310},
$$|(\theta_N\cdot B_{N+1}\circ\beta_{w_{N+1}})(z)|
\geq|(B_{N+1}\circ\beta_{w_{N+1}})(z)|\delta_N
\inf_{1\leq n\leq N}|(B_n\circ\beta_{w_n})(z)| $$
$$ \geq
|(B_{N+1}\circ\beta_{w_{N+1}})(z)|\delta_N\gamma_{N+1}=
\gamma_{N+1}\delta_N\inf_{1\leq n\leq N+1}|(B_n\circ\beta_{w_n})(z)|.$$

Since $\delta_{N+1}=\gamma_{N+1}\delta_N$ and $0<\delta_N<1$,
the relation \eqref{311} is proved. Thus, \eqref{37} is proved by induction
for all $N$. 
Let $N\to\infty$ in both parts of  \eqref{37}, then \eqref{36} follows.
By Lemma  \ref{lem31}, there exists a sequence 
$\zeta_N\in\mathbb  T$ such that 
the product $\prod_N\zeta_NB_N\circ\beta_{w_N}$ converges.

Let $f_N$ be  functions from Proposition  \ref{prop37}  
applied to $C$, $B_N$, $\varphi_N$, and $g$, with $w=w_N$.
Put $\psi_N=f_N\circ\beta_{w_N}$. Then $\psi_N \in H^\infty$, 
$g -\psi_N\cdot \varphi_N\circ\beta_{w_N}\in (B_N\circ\beta_{w_N})H^\infty$
and $\operatorname{dist}(\psi_N,(B_N\circ\beta_{w_N})H^\infty)
=\operatorname{dist}(f_N,B_N H^\infty)\leq C$. \end{proof}

 \section{Preliminaries: Jordan operators }

For $N\geq 1$ and $\lambda\in\mathbb  D$ define the operator
$T_{N,\lambda}\colon\mathbb  C^{N+1}\to\mathbb  C^{N+1}$ acting 
by the formula $T_{N,\lambda}e_0 = \lambda e_0$, 
$T_{N,\lambda}e_n = \lambda e_n + e_{n-1}$, $1\leq n\leq N$, 
where $\{e_n\}_{n=0}^N$ is an orthonormal basis of $\mathbb  C^{N+1}$. 
It is well known that $T_{N,\lambda}\approx  T_{b_\lambda^{N+1}}$, 
but $\|T_{N,\lambda}\|\leq 1$ if and only if $\lambda = 0$. 

\begin{lemma}\label{lem41} Let $N\geq 1$, and let $\varepsilon >0$. 
Then there exists $\delta > 0$ such that for every 
$\lambda\in\mathbb  D$, $|\lambda|<\delta$, there exists an operator 
$X\colon\mathbb  C^{N+1}\to\mathcal K_{b_\lambda^{N+1}}$ 
such that $X T_{N,\lambda} = T_{b_\lambda^{N+1}} X$,
$\|X\|\leq 1 + \varepsilon$, and $\|X^{-1}\|\leq 1 + \varepsilon$, 
where the space $\mathcal K_{b_\lambda^{N+1}}$ and the operator 
$T_{b_\lambda^{N+1}}$ are defined in \eqref{17}.\end{lemma}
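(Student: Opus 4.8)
The plan is to move the whole problem into the fixed model space $\mathcal K_{\chi^{N+1}}=H^2\ominus z^{N+1}H^2$ by means of the unitaries $U_w$ of Lemma~\ref{lem32}, and then to read off the norm bounds there from a soft finite-dimensional continuity argument, exploiting that at $\lambda=0$ the two operators involved are unitarily equivalent.

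First I would fix $\lambda\in\mathbb D\setminus\{0\}$, write $b_\lambda=\zeta_\lambda\beta_\lambda$ with $\zeta_\lambda=|\lambda|/\lambda\in\mathbb T$, and note that $b_\lambda^{N+1}H^2=\beta_\lambda^{N+1}H^2$, so $\mathcal K_{b_\lambda^{N+1}}=\mathcal K_{\beta_\lambda^{N+1}}$ and $T_{b_\lambda^{N+1}}=T_{\beta_\lambda^{N+1}}$. Since $\beta_\lambda^{N+1}=\chi^{N+1}\circ\beta_\lambda$, Lemma~\ref{lem32} with $w=\lambda$ and $\theta=\chi^{N+1}$ produces a unitary $U_\lambda$ taking $\mathcal K_{\beta_\lambda^{N+1}}$ onto $\mathcal K_{\chi^{N+1}}$ and intertwining $T_{\beta_\lambda^{N+1}}$ with $\beta_\lambda(T_{\chi^{N+1}})$. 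Hence it is enough to build $Y\colon\mathbb C^{N+1}\to\mathcal K_{\chi^{N+1}}$ with $YT_{N,\lambda}=\beta_\lambda(T_{\chi^{N+1}})Y$ and $\|Y\|,\|Y^{-1}\|\leq 1+\varepsilon$; then $X:=U_\lambda|_{\mathcal K_{\chi^{N+1}}}Y$ (recall $U_\lambda=U_\lambda^{-1}$) maps $\mathbb C^{N+1}$ into $\mathcal K_{b_\lambda^{N+1}}$, satisfies $XT_{N,\lambda}=T_{b_\lambda^{N+1}}X$, and, $U_\lambda$ being unitary, inherits $\|X\|=\|Y\|$ and $\|X^{-1}\|=\|Y^{-1}\|$. (If $\lambda=0$ is to be admitted, then with the usual convention $b_0=\chi$ the operators $T_{b_0^{N+1}}$ and $T_{N,0}$ are both nilpotent Jordan blocks of size $N+1$, hence unitarily equivalent, and there is nothing to do.)

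Next I would strip off the scalar part. Put $J=T_{\chi^{N+1}}$ (so $Jz^k=z^{k+1}$ for $k<N$, $Jz^N=0$, $J^{N+1}=\mathbb O$) and $M=T_{N,\lambda}-\lambda I$ (the standard nilpotent block, $Me_n=e_{n-1}$, $Me_0=0$, independent of $\lambda$). A short computation using $J^{N+1}=\mathbb O$ gives
\[
N_\lambda:=\beta_\lambda(T_{\chi^{N+1}})-\lambda I=(|\lambda|^2-1)J(I-\overline\lambda J)^{-1}=-(1-|\lambda|^2)\sum_{j=1}^{N}\overline\lambda^{\,j-1}J^{j},
\]
a polynomial in $J$ with no constant term; thus $\beta_\lambda(T_{\chi^{N+1}})$ has spectrum $\{\lambda\}$, $N_\lambda$ is nilpotent, and $YT_{N,\lambda}=\beta_\lambda(T_{\chi^{N+1}})Y$ is equivalent to $YM=N_\lambda Y$. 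Since all powers of $J$ beyond $J^N$ vanish, $N_\lambda^{N}=(-1)^{N}(1-|\lambda|^2)^{N}J^{N}$, so $N_\lambda^{N}\cdot 1=(-1)^{N}(1-|\lambda|^2)^{N}z^{N}\neq 0$ and the constant function $1\in\mathcal K_{\chi^{N+1}}$ is a cyclic vector for $N_\lambda$, for every $\lambda\in\mathbb D$.

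Finally, define $v_n(\lambda)=N_\lambda^{N-n}\cdot 1$ for $0\leq n\leq N$ and let $Y_\lambda$ be the linear map with $Y_\lambda e_n=v_n(\lambda)$. As $\{v_n(\lambda)\}_{n=0}^{N}$ is a Jordan basis for $N_\lambda$, one checks at once that $Y_\lambda M=N_\lambda Y_\lambda$ (using $N_\lambda v_0(\lambda)=N_\lambda^{N+1}\cdot 1=0$). Each $v_n(\lambda)$ is a polynomial in $\lambda$ and $\overline\lambda$ with values in the fixed finite-dimensional space $\mathcal K_{\chi^{N+1}}$, so the Gram matrix $G(\lambda)=\bigl[\langle v_m(\lambda),v_n(\lambda)\rangle\bigr]_{m,n=0}^{N}=Y_\lambda^{\ast}Y_\lambda$ depends continuously on $\lambda$; and $N_0=-J$ gives $v_n(0)=(-1)^{N-n}z^{N-n}$, hence $G(0)=I_{N+1}$. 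By continuity of its eigenvalues there is $\delta>0$ with $\|G(\lambda)\|\leq(1+\varepsilon)^{2}$ and $\|G(\lambda)^{-1}\|\leq(1+\varepsilon)^{2}$ for $|\lambda|<\delta$, whence $Y_\lambda$ is invertible and $\|Y_\lambda\|=\|G(\lambda)\|^{1/2}\leq 1+\varepsilon$, $\|Y_\lambda^{-1}\|=\|G(\lambda)^{-1}\|^{1/2}\leq 1+\varepsilon$; taking $X=U_\lambda|_{\mathcal K_{\chi^{N+1}}}Y_\lambda$ completes the argument. The only thing requiring care here is the bookkeeping of the two changes of variable ($b_\lambda$ to $\beta_\lambda$, then $\beta_\lambda$ to $\chi^{N+1}$) together with the continuity of $\lambda\mapsto N_\lambda$ on a fixed space; the perturbation estimate itself is entirely routine.
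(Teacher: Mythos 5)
Your proof is correct, but it reaches the conclusion by a different route than the paper. The paper stays inside $\mathcal K_{b_\lambda^{N+1}}$: it writes down an explicit orthonormal basis $h_n(z)=b_\lambda^{N+1}(z)\frac{(1-|\lambda|^2)^{1/2}(1-\overline\lambda z)^n}{(z-\lambda)^{n+1}}$ of that space, defines $X$ by an explicit upper triangular matrix $(c_{nk})$ with $c_{nk}=\frac{n!}{k!(n-k)!}\frac{\overline\lambda^{\,n-k}}{(1-|\lambda|^2)^n}$, verifies the intertwining by checking that $u_n=Xe_n$ is a Jordan chain for $T_{b_\lambda^{N+1}}$, and then gets the norm bounds from the splitting $X=D+A$ ($D$ diagonal, $\|D\|\to 1$, $\|A\|\to 0$ as $|\lambda|\to 0$, $X^{-1}=\sum_{n=0}^N(-1)^n(D^{-1}A)^nD^{-1}$). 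You instead conjugate the whole problem into the fixed space $\mathcal K_{\chi^{N+1}}$ using the unitary $U_w$ of Lemma \ref{lem32}, identify $\beta_\lambda(T_{\chi^{N+1}})-\lambda I$ as an explicit nilpotent polynomial $N_\lambda$ in the fixed Jordan block $J$, take the Jordan chain $v_n=N_\lambda^{N-n}\cdot 1$, and extract the bounds from continuity of the Gram matrix at $\lambda=0$, where it equals the identity. Both arguments are perturbations of the unitarily equivalent case $\lambda=0$, but yours trades the paper's explicit matrix computation for a softer compactness-free continuity argument on a fixed ambient space; the price is the extra bookkeeping of the two changes of variable ($b_\lambda$ versus $\beta_\lambda$, then $\beta_\lambda^{N+1}=\chi^{N+1}\circ\beta_\lambda$) and the separate treatment of $\lambda=0$ (where $\beta_0(z)=-z$ rather than $z$), all of which you handle correctly. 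The paper's version has the minor advantage of producing $X$ and $X^{-1}$ in closed form directly in the target space, which is in keeping with the explicitness the author emphasizes elsewhere, but your version is arguably cleaner in that it reuses Lemma \ref{lem32}, which is already needed later in Corollary \ref{cor64} and Theorem \ref{thm71}.
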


\begin{proof}  Put 
$$\textstyle h_n(z) = b_\lambda^{N+1}(z)
\frac{(1-|\lambda|^2)^{1/2}(1-\overline\lambda z)^n}{(z-\lambda)^{n+1}}, 
 \ z\in\mathbb  D, 
\ \text{ and } \  c_{nk}=\frac{n!}{k!(n-k)!}
\frac{\overline\lambda^{n-k}}{(1-|\lambda|^2)^n},$$
$0\leq k\leq n$, $0\leq n\leq N$. 
Then $\{h_n\}_{n=0}^N$ is an orthonormal basis of $\mathcal K_{b_\lambda^{N+1}}$. 
Put 
$$ X\colon\mathbb  C^{N+1}\to \mathcal K_{b_\lambda^{N+1}}, \ \ 
Xe_n  = \sum_{k=0}^nc_{nk}h_k.$$
Then $X$ has the  upper triangular form 
in the orthonormal bases  $\{e_n\}_{n=0}^N$ and $\{h_n\}_{n=0}^N$. 
Also, $X T_{N,\lambda} = T_{b_\lambda^{N+1}} X$.
To see that, put $u_n=Xe_n$, 
then $u_n(z) = b_\lambda^{N+1}(z)
\frac{(1-|\lambda|^2)^{1/2}}{(z-\lambda)^{n+1}}$, $z\in\mathbb  D$, 
therefore, $T_{b_\lambda^{N+1}}u_0 = \lambda u_0$, 
$T_{b_\lambda^{N+1}}u_n = \lambda u_n + u_{n-1}$, $1\leq n\leq N$.

Let $D$ be a diagonal matrix with the elements $c_{nn}$, 
$0\leq n\leq N$, put $A=X-D$, then $(D^{-1}A)^{N+1} = \mathbb  O$, 
therefore, $X^{-1} = \sum_{n=0}^N (-1)^n(D^{-1}A)^nD^{-1}$. 
Also, $\|D\| = \frac{1}{(1-|\lambda|^2)^N}$, and 
$\|D^{-1}\| = 1$.
If $0\leq k\leq n-1$, then $c_{nk}\to 0$ when $|\lambda|\to 0$, 
and, since $A$ is a finite matrix with elements $c_{nk}$ and $0$, 
then $\|A\|\to 0$ when $|\lambda|\to 0$. 
Since $\|X\|\leq \|D\| + \|A\|$ 
and $\|X^{-1}\| \leq \sum_{n=0}^N \|A\|^n$,
$\|X\|\to 1$ and $\|X^{-1}\|\to 1$ when $|\lambda|\to 0$.
Therefore, there exists $\delta > 0$ such that if 
$|\lambda|<\delta$ then $\|X\|\leq 1 + \varepsilon$, 
and $\|X^{-1}\|\leq 1 + \varepsilon$. \end{proof} 

Suppose $\mathcal H$ is a Hilbert space, and $N\geq 1$. 
Define the shift operator $\text{\bf S}_{\mathcal H N}$ 
on $\oplus_{n=0}^N\mathcal H$ as following:
 \begin{equation} \label{41} \text{\bf S}_{\mathcal H N} = \begin{pmatrix}
\mathbb  O & I_{\mathcal H} &  \ldots & \mathbb  O & \mathbb  O\\ 
\ldots & \ldots & \ldots & \ldots & \ldots \\ 
\mathbb  O & \mathbb  O  & \ldots & \mathbb  O & I_{\mathcal H} \\ 
\mathbb  O & \mathbb  O  & \ldots & \mathbb  O   & \mathbb  O 
\end{pmatrix}.  \end{equation} 
Note that $\text{\bf S}_{\mathcal H N}^{N+1} = \mathbb  O$, and  
$\text{\bf S}_{\mathcal H N}^N \neq \mathbb  O$. Also, 
$\mu_{\text{\bf S}_{\mathcal H N}}=\dim \mathcal H$.

\begin{corollary}\label{cor42} Suppose $\mathcal H$ is a Hilbert space, 
$\text{\bf d} = \dim \mathcal H <\infty$, $\{e_j\}_{j=1}^{\text{\bf d}}$ 
is an orthonormal basis of $\mathcal H$,  
$\{\lambda_j\}_{j=1}^{\text{\bf d}}\subset \mathbb  D$, 
the operator $D\colon\mathcal H\to\mathcal H$ acts by the formula 
$De_j=\lambda_je_j$, $1\leq j\leq \text{\bf d}$.
Furthermore, suppose 
$N\geq 1$, and put $T=\oplus_{n=0}^N D + \text{\bf S}_{\mathcal H N}$.
Then $T\approx \oplus_{j=1}^{\text{\bf d}}T_{b_{\lambda_j}^{N+1}}$.
Moreover, let $\varepsilon >0$, and let $\delta$ be from Lemma \ref{lem41} 
applied to $N$ and $\varepsilon$. If $|\lambda_j|<\delta$ 
for every $1\leq j\leq \text{\bf d}$, then 
 there exists an operator 
$X\colon\oplus_{n=0}^N\mathcal H\to\oplus_{j=1}^{\text{\bf d}}
\mathcal K_{b_{\lambda_j}^{N+1}}$ 
such that $X T = (\oplus_{j=1}^{\text{\bf d}}T_{b_{\lambda_j}^{N+1}}) X$,
$\|X\|\leq 1 + \varepsilon$, and $\|X^{-1}\|\leq 1 + \varepsilon$.\end{corollary}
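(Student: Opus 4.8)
The plan is to play the two natural orthogonal decompositions of $\oplus_{n=0}^N\mathcal H$ against each other. In the first, ``horizontal'' one — the decomposition into the $N+1$ successive copies of $\mathcal H$ — the operator $T$ is written as $\oplus_{n=0}^N D+\text{\bf S}_{\mathcal H N}$. In the second, ``vertical'' one I would group, for each fixed basis vector $e_j$, the $N+1$ copies of $e_j$ sitting in the successive summands: writing $e_{n,j}$ for $e_j$ placed in the $n$-th summand, the family $\{e_{n,j}\}_{0\leq n\leq N,\,1\leq j\leq\text{\bf d}}$ is an orthonormal basis of $\oplus_{n=0}^N\mathcal H$. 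For fixed $j$ the span $\mathcal L_j$ of $\{e_{n,j}\}_{n=0}^N$ is invariant under $\oplus_{n=0}^N D$ (since $De_j=\lambda_je_j$) and under $\text{\bf S}_{\mathcal H N}$, which by \eqref{41} acts by $e_{n,j}\mapsto e_{n-1,j}$ for $n\geq 1$ and $e_{0,j}\mapsto 0$; comparing with the definition of $T_{N,\lambda}$, the unitary $\Psi\colon\oplus_{n=0}^N\mathcal H\to\oplus_{j=1}^{\text{\bf d}}\mathbb C^{N+1}$ that sends $e_{n,j}$ to $e_n$ in the $j$-th copy of $\mathbb C^{N+1}$ satisfies $\Psi T\Psi^{-1}=\oplus_{j=1}^{\text{\bf d}}T_{N,\lambda_j}$. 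In particular $T\cong\oplus_{j=1}^{\text{\bf d}}T_{N,\lambda_j}$.

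Granting this, the first assertion is immediate: since $T_{N,\lambda_j}\approx T_{b_{\lambda_j}^{N+1}}$ for each $j$ by the fact recalled just before Lemma \ref{lem41}, taking the direct sum of the intertwining invertible operators gives $\oplus_{j=1}^{\text{\bf d}}T_{N,\lambda_j}\approx\oplus_{j=1}^{\text{\bf d}}T_{b_{\lambda_j}^{N+1}}$, and composing with $\Psi$ yields $T\approx\oplus_{j=1}^{\text{\bf d}}T_{b_{\lambda_j}^{N+1}}$.

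For the second assertion I would fix $\varepsilon>0$, take $\delta$ as in Lemma \ref{lem41} for $N$ and $\varepsilon$, and assume $|\lambda_j|<\delta$ for every $j$. Lemma \ref{lem41} then supplies, for each $j$, an operator $X_j\colon\mathbb C^{N+1}\to\mathcal K_{b_{\lambda_j}^{N+1}}$ with $X_jT_{N,\lambda_j}=T_{b_{\lambda_j}^{N+1}}X_j$, $\|X_j\|\leq 1+\varepsilon$, and $\|X_j^{-1}\|\leq 1+\varepsilon$. Setting $X=(\oplus_{j=1}^{\text{\bf d}}X_j)\Psi$, one gets $XT=(\oplus_jX_j)(\oplus_jT_{N,\lambda_j})\Psi=(\oplus_jT_{b_{\lambda_j}^{N+1}})(\oplus_jX_j)\Psi=(\oplus_{j=1}^{\text{\bf d}}T_{b_{\lambda_j}^{N+1}})X$; moreover $X$ is invertible with $X^{-1}=\Psi^{-1}(\oplus_jX_j^{-1})$, and since $\Psi$ is unitary and the norm of a (finite) direct sum of operators is the maximum of their norms, $\|X\|=\max_j\|X_j\|\leq 1+\varepsilon$ and $\|X^{-1}\|=\max_j\|X_j^{-1}\|\leq 1+\varepsilon$.

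There is no genuine obstacle: all the content sits in Lemma \ref{lem41} and in the known relation $T_{N,\lambda}\approx T_{b_\lambda^{N+1}}$, and the rest is bookkeeping. The only point that deserves care — and essentially the only place an error could slip in — is the identification $\Psi$: one must check both that it is unitary, so that it does not affect $\|X\|$ and $\|X^{-1}\|$, and that it really conjugates $\text{\bf S}_{\mathcal H N}$ into the direct sum of the nilpotent parts of the $T_{N,\lambda_j}$, which is exactly where the hypotheses that $\{e_j\}$ is an orthonormal basis of $\mathcal H$ and that $D$ is diagonal in it are used.
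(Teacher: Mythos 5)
Your proof is correct and follows essentially the same route as the paper: decompose $\oplus_{n=0}^N\mathcal H$ into the $T$-invariant subspaces spanned by the copies of each $e_j$, identify the restriction of $T$ to each with $T_{N,\lambda_j}$, and then apply Lemma \ref{lem41} summand by summand. The paper states this more tersely (writing $T=\oplus_{j=1}^{\text{\bf d}}T_{N,\lambda_j}$ directly and leaving the norm bookkeeping for the direct sum of the $X_j$ implicit), but the content is identical.
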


\begin{proof} We have $\mathcal H =\oplus_{j=1}^{\text{\bf d}}\mathbb  C e_j$, 
therefore, 
$\oplus_{n=0}^N\mathcal H = \oplus _{j=1}^{\text{\bf d}}\oplus_{n=0}^N\mathbb  C e_j$.
The spaces $\oplus_{n=0}^N\mathbb  C e_j$ are invariant for $T$, and 
$T|_{\oplus_{n=0}^N\mathbb  C e_j} = T_{N,\lambda_j}$. Thus, 
$T=\oplus_{j=1}^{\text{\bf d}}T_{N,\lambda_j}$, and it remains to apply Lemma \ref{lem41}.\end{proof}

\begin{lemma}\label{lem43} Suppose $\mathcal H$ is a Hilbert space, 
$\text{\bf d} = \dim \mathcal H <\infty$, $\{e_j\}_{j=1}^{\text{\bf d}}$ 
is an orthonormal basis of $\mathcal H$,  
$\{\lambda_j\}_{j=1}^{\text{\bf d}}$, $\{\nu_j\}_{j=1}^{\text{\bf d}}\subset \mathbb  D$, 
the operators $D\colon\mathcal H\to\mathcal H$, 
$D_\star\colon\mathcal H\to\mathcal H$ act by the formulas 
$De_j=\lambda_je_j$, $1\leq j\leq \text{\bf d}$, 
$D_\star e_j=\nu_je_j$, $1\leq j\leq \text{\bf d}$.
Let $N\geq 1$, and let 
$A\colon\oplus_{n=0}^N\mathcal H\to\oplus_{n=0}^N\mathcal H$ 
be an arbitrary operator. Put 
$B=\prod_{j=1}^{\text{\bf d}}b_{\lambda_j}^{N+1} b_{\nu_j}^{N+1}$.
Put $T_0 = \oplus_{n=0}^N D + \text{\bf S}_{\mathcal H N}$ and 
$T_1  = \oplus_{n=0}^N D_\star + \text{\bf S}_{\mathcal H N}^\ast $.
Define the operator $T$ 
on the space $\oplus_{n=0}^N\mathcal H\oplus\oplus_{n=0}^N\mathcal H$
as follows:
$$T = \begin{pmatrix} T_1 & A \\  \mathbb  O & T_0
 \end{pmatrix}.$$
If $\lambda_j\neq\lambda_k$, $\nu_j\neq\nu_k$ for $j\neq k$, and 
$\lambda_j\neq \nu_k$ for every $1\leq j, k\leq \text{\bf d}$,
then $T\approx T_B$, where $T_B$ is defined in \eqref{17}.\end{lemma}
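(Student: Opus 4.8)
The plan is to show that $T$ is a $C_0$-operator with the ``right'' minimal annihilating inner function and the ``right'' defect, and then invoke the classification of Jordan operators (cyclic $C_0$-contractions up to quasisimilarity are determined by their characteristic scalar inner functions, so up to similarity a finite-dimensional $C_0$-operator with cyclic structure is $T_B$). Concretely, I would first record that $T_0\approx\oplus_{j=1}^{\mathbf d}T_{b_{\lambda_j}^{N+1}}$ and $T_1\approx\oplus_{j=1}^{\mathbf d}T_{b_{\nu_j}^{N+1}}^{\ast}\cong\oplus_{j=1}^{\mathbf d}T_{b_{\overline\nu_j}^{N+1}}$ by Corollary \ref{cor42} applied to $T_0$ and to $T_1^\ast$ (note $\overline{b_{\nu_j}^{N+1}}=b_{\overline{\nu_j}}^{N+1}$ as a map on $\overline{\mathbb D}$, up to a unimodular constant). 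Since the $\lambda_j,\nu_k$ are $2\mathbf d$ pairwise-distinct points of $\mathbb D$, the finite Blaschke product $B=\prod_j b_{\lambda_j}^{N+1}b_{\nu_j}^{N+1}$ has degree $2\mathbf d(N+1)=\dim(\oplus_{n=0}^N\mathcal H\oplus\oplus_{n=0}^N\mathcal H)$, so $\dim\mathcal K_B$ equals the dimension of the space on which $T$ acts. This is the numerology that makes the statement plausible.

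Next I would verify that $B(T)=\mathbb O$. Because $T$ is upper triangular with diagonal blocks $T_1$ and $T_0$, for any polynomial (or $H^\infty$ function) $p$ the operator $p(T)$ is upper triangular with diagonal blocks $p(T_1)$ and $p(T_0)$; hence $p(T)$ is nilpotent of order $\le 2$ as soon as $p(T_0)=\mathbb O$ and $p(T_1)=\mathbb O$. Now $b_{\lambda_j}^{N+1}(T_0)=\mathbb O$ for each $j$ (since $T_{b_{\lambda_j}^{N+1}}$ is annihilated by $b_{\lambda_j}^{N+1}$ and the nilpotent perturbation raises the order to at most $N+1$, matching the factor exponent $N+1$; more carefully, $T_0=\oplus_j T_{N,\lambda_j}$ and $b_{\lambda_j}^{N+1}(T_{N,\lambda_j})=\mathbb O$ because $T_{N,\lambda_j}\approx T_{b_{\lambda_j}^{N+1}}$), so $(\prod_j b_{\lambda_j}^{N+1})(T_0)=\mathbb O$, and similarly $(\prod_j b_{\nu_j}^{N+1})(T_1)=\mathbb O$; therefore $B(T_0)=B(T_1)=\mathbb O$, and the off-diagonal block of $B(T)$ vanishes as well since each factor kills one of the two diagonal blocks — to be safe one checks that $\big(\prod_j b_{\nu_j}^{N+1}\big)(T)$ maps everything into the first summand and then $\big(\prod_j b_{\lambda_j}^{N+1}\big)(T)$ kills that summand. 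Thus $B(T)=\mathbb O$, so $T$ is a $C_0$-operator and $T\prec T_{B'}\oplus\cdots$ for some inner divisors $B'\mid B$; since $B(T)=\mathbb O$ already, $T$ is quasisimilar to $\oplus_k T_{\theta_k}$ with $\prod_k\theta_k$ dividing $B$ and $\operatorname{lcm}_k\theta_k=B$ (the minimal inner annihilator), where the number of summands equals $\dim\ker(T^\ast-\text{suitable})$; here the multiplicity count gives just one summand, see below.

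The crux is to show $T$ is \emph{cyclic}, equivalently $\mu_T=1$, which together with $B(T)=\mathbb O$ and $\dim=\deg B$ forces $T\approx T_B$ by the structure theory of $C_0$-operators (a cyclic $C_0$-operator of defect one with minimal function $B$ is similar to $T_B$; here similarity rather than mere quasisimilarity comes from finite-dimensionality — any two Jordan forms with the same elementary divisors are similar, and $B$ having $2\mathbf d$ distinct roots each of multiplicity $N+1$ pins down the Jordan structure once we know the operator is nonderogatory at each eigenvalue). So the real work is: (i) show the eigenvalues of $T$ are exactly $\{\lambda_j\}\cup\{\nu_j\}$, each with geometric multiplicity one and algebraic multiplicity $N+1$. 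Geometric multiplicity one at each $\nu_j$ is inherited from $T_1$ (a single Jordan block $T_{N,\nu_j}$), using $\lambda_k\ne\nu_j$ so the $T_0$-block contributes no eigenvector at $\nu_j$; geometric multiplicity one at each $\lambda_j$ requires showing that the presence of $A$ does not create an extra eigenvector, i.e. solving $(T-\lambda_j)x=0$ with $x=(x_1,x_0)$ forces $x_1$ to lie in the one-dimensional kernel of $(T_1-\lambda_j)$ which, since $\lambda_j\ne\nu_k$, is $\{0\}$, hence $x$ is governed by the $T_0$-block alone. This is the step where the distinctness hypotheses $\lambda_j\ne\nu_k$ are used essentially, and it is the main obstacle: one must rule out ``resonances'' through $A$, but since $T_1-\lambda_j$ is invertible for every $j$ (its spectrum is $\{\nu_k\}\not\ni\lambda_j$) the off-diagonal block $A$ can be eliminated by a similarity $\begin{pmatrix}I&Z\\\mathbb O&I\end{pmatrix}$ solving the Sylvester equation $T_1Z-ZT_0=-A$, which is solvable precisely because $\sigma(T_0)\cap\sigma(T_1)=\emptyset$. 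After this similarity $T$ becomes $T_1\oplus T_0\approx\big(\oplus_j T_{b_{\nu_j}^{N+1}}^\ast\big)\oplus\big(\oplus_j T_{b_{\lambda_j}^{N+1}}\big)$; finally I would argue this direct sum is similar to $T_B$ by comparing Jordan forms (each summand is a single Jordan block of size $N+1$ at a point, all $2\mathbf d$ points distinct, and $T_B$ has exactly this Jordan form since $\mathcal K_B$ with $T_B$ decomposes along the distinct zeros of $B$ into the $T_{b_{\cdot}^{N+1}}$ pieces). The Sylvester/Rosenblum step is the clean way to organize everything and is where all the hypotheses get used; the remaining identifications are routine finite-dimensional linear algebra plus the already-established $T_{N,\mu}\approx T_{b_\mu^{N+1}}$ and $T_{b_\mu^{N+1}}^\ast\cong T_{b_{\overline\mu}^{N+1}}$.
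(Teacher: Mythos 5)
Your decisive step --- eliminating $A$ by the block-triangular similarity $\begin{pmatrix} I & Z \\ \mathbb O & I\end{pmatrix}$ solving the Sylvester equation $T_1Z-ZT_0=-A$, which is solvable because $\sigma(T_0)=\{\lambda_j\}_j$ and $\sigma(T_1)=\{\nu_j\}_j$ are disjoint, followed by a Jordan-form comparison of $T_1\oplus T_0$ with $T_B$ --- is essentially the paper's proof: the paper produces the solution explicitly as $Y_0=p(T_1)^{-1}A_0$ with $p(z)=\prod_j(z-\lambda_j)^{N+1}$, using that $p(T_0)=\mathbb O$ while $p(T_1)$ is invertible, which is the same Rosenblum-type mechanism. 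However, your identification of $T_1$ contains a genuine error. Applying Corollary \ref{cor42} to $T_1^\ast=\oplus_{n=0}^N D_\star^\ast+\text{\bf S}_{\mathcal H N}$ one must use $D_\star^\ast e_j=\overline{\nu_j}e_j$, so that route gives $T_1^\ast\approx\oplus_j T_{b_{\overline{\nu_j}}^{N+1}}$ and hence $T_1\approx\oplus_j T_{b_{\overline{\nu_j}}^{N+1}}^\ast\cong\oplus_j T_{b_{\nu_j}^{N+1}}$; you instead assert $T_1\approx\oplus_j T_{b_{\nu_j}^{N+1}}^\ast\cong\oplus_j T_{b_{\overline{\nu_j}}^{N+1}}$, an operator whose spectrum is $\{\overline{\nu_j}\}_j$. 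This contradicts your own (correct) later remark that $\sigma(T_1)=\{\nu_k\}_k$, and your final comparison would deliver $T\approx T_{B'}$ with $B'=\prod_j b_{\lambda_j}^{N+1}b_{\overline{\nu_j}}^{N+1}\neq B$ whenever some $\nu_j$ is non-real (the lemma allows arbitrary $\nu_j\in\mathbb D$). The clean fix avoids adjoints entirely: the coordinate-reversal unitary $J(x_0,\dots,x_N)=(x_N,\dots,x_0)$ satisfies $J\text{\bf S}_{\mathcal H N}^\ast J=\text{\bf S}_{\mathcal H N}$ and commutes with $\oplus_{n=0}^N D_\star$, so $T_1\cong\oplus_{n=0}^N D_\star+\text{\bf S}_{\mathcal H N}$ and Corollary \ref{cor42} applies directly to give $T_1\approx\oplus_j T_{b_{\nu_j}^{N+1}}$, as in the paper.

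A secondary point: your first two paragraphs (the $C_0$-theory detour via $B(T)=\mathbb O$, minimal functions and multiplicity) are not needed once the Sylvester step is in place, and the direct verification of $B(T)=\mathbb O$ offered there does not work as stated. With $q=\prod_j b_{\nu_j}^{N+1}$ and $r=\prod_j b_{\lambda_j}^{N+1}$ one has $q(T)=\begin{pmatrix}\mathbb O & \ast \\ \mathbb O & q(T_0)\end{pmatrix}$, which does \emph{not} map everything into the first summand since $q(T_0)$ is invertible rather than zero; likewise $r(T)$ restricted to the first summand is the invertible operator $r(T_1)$, so it does not ``kill'' that summand, and the upper-right corner of $r(T)q(T)=B(T)$ is $r(T_1)\ast+\ast\, q(T_0)$, which has no reason to vanish a priori. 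The identity $B(T)=\mathbb O$ is of course true, but the painless way to see it is \emph{after} the block-diagonalization. I would delete those paragraphs and keep only the Sylvester argument together with the corrected identification of $T_0$ and $T_1$.
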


\begin{proof}
Put $p(z)=\prod_{j=1}^{\text{\bf d}}(z-\lambda_j)^{N+1}$.
Then $p(T_0) = \mathbb  O$, $p(T_1)$ has bounded inverse,
because $\lambda_j\neq \nu_k$ for every $1\leq j, k\leq \text{\bf d}$
and $T_1$ acts on a finite dimensional space, and
$$p(T) = \begin{pmatrix} p(T_1) & A_0\\  \mathbb  O & \mathbb  O
 \end{pmatrix},$$
where $A_0$ is an appropriate operator. Put $Y_0=p(T_1)^{-1}A_0$
and $Y = \begin{pmatrix} I & Y_0\\  \mathbb  O & I \end{pmatrix}$.
The equality $YT = (T_1\oplus T_0)Y$ follows from the definition of $Y$ 
and the equality $p(T)T = Tp(T)$ writing in matrix form.
Thus, $T\approx T_1\oplus T_0$. By Corollary \ref{cor42},
$T_0\approx \oplus_{j=1}^{\text{\bf d}}T_{b_{\lambda_j}^{N+1}}$ 
and $T_1\approx \oplus_{j=1}^{\text{\bf d}}T_{b_{\nu_j}^{N+1}}$. 
Since $B$ is a finite Blaschke product and all $\lambda_j$, $\nu_k$ are
pairwise distinct,  we conclude that 
$T\approx T_B$. \end{proof}

\section{Preliminaries: Foguel--Hankel operators with truncated shifts}

We introduce the following notation (see [DP], Sec. 1.3]):
$$\text{\bf V} = \begin{pmatrix} 1 & 0 \\  0 & -1 \end{pmatrix}, \ \ \ 
\text{\bf C} = \begin{pmatrix} 0 & 0 \\  1 & 0 \end{pmatrix}, \ \ \ 
\text{\bf I}_2 = \begin{pmatrix} 1 & 0 \\  0 & 1 \end{pmatrix}, \ \ \ 
\text{\bf D}(a, c) = \begin{pmatrix} a & 0 \\  0 & c \end{pmatrix}, \ \ 
a,c \in \mathbb  C.$$
For $N\geq 1$ put 
 \begin{equation} \label{51}\text{\bf C}_{Nj} = \text{\bf V}^{\otimes j}\otimes \text{\bf C}
\otimes\text{\bf I}_2^{\otimes N-j}, \ \ 0\leq j\leq N,  \end{equation} 
and for the families $\{a_l\}_{l=0}^N$, $\{c_l\}_{l=0}^N\subset\mathbb  C$
put 
 \begin{equation} \label{52} \text{\bf D}(\{a_l\}_{l=0}^N,\{c_l\}_{l=0}^N) = 
\text{\bf D}(a_0, c_0)\otimes\ldots\otimes \text{\bf D}(a_N, c_N)  \end{equation}
 and 
 \begin{equation} \label{53}\begin{aligned}& \text{\bf D}_j(\{a_l\}_{l=0}^N,\{c_l\}_{l=0}^N) \\ &
  = 
\text{\bf D}(a_0, c_0)\otimes\ldots\otimes 
\text{\bf D}(a_{j-1}, c_{j-1})\otimes\text{\bf D}(c_j,a_j)
\otimes \text{\bf D}(a_{j+1}, c_{j+1})\otimes\ldots\otimes
\text{\bf D}(a_N, c_N), \end{aligned} \end{equation} 
$$  0\leq j\leq N. $$
Here $A^{\otimes j}$ denotes the tensor product of $j$ copies of $A$. 
$\text{\bf C}_{Nj}$, $\text{\bf D}(\{a_l\}_{l=0}^N,\{c_l\}_{l=0}^N)$ and
$\text{\bf D}_j(\{a_l\}_{l=0}^N,\{c_l\}_{l=0}^N)$ are 
$2^{N+1}\times 2^{N+1}$ matrices, that is, operators on $\mathbb  C^{2^{N+1}}$, 
and $\text{\bf D}(\{a_l\}_{l=0}^N,\{c_l\}_{l=0}^N)$ and
$\text{\bf D}_j(\{a_l\}_{l=0}^N,\{c_l\}_{l=0}^N)$, $0\leq j\leq N$,
 are diagonal
with respect to the standard basis in $\mathbb  C^{2^{N+1}}$. 
From the equalities 
$$\text{\bf V}\text{\bf D}(a, c) = \text{\bf D}(a, c)\text{\bf V}, \ \ 
\text{\bf I}_2\text{\bf D}(a, c) = \text{\bf D}(a, c)\text{\bf I}_2  
\ \text{ and } \ 
\text{\bf C}\text{\bf D}(a, c) = \text{\bf D}(c, a) \text{\bf C}$$
we conclude that 
 \begin{equation} \label{54}  \text{\bf C}_{Nj}\text{\bf D}(\{a_l\}_{l=0}^N,\{c_l\}_{l=0}^N) = 
\text{\bf D}_j(\{a_l\}_{l=0}^N,\{c_l\}_{l=0}^N)\text{\bf C}_{Nj}, \ \ 
0\leq j\leq N.  \end{equation} 

\begin{lemma}\label{lem51} For every $0<\delta<1$ there exist 
families $\{a_l\}_{l=0}^N$, $\{c_l\}_{l=0}^N\subset(0,\delta^{1/(N+1)})$
such that the elements of a diagonal matrix
$\text{\bf D}(\{a_l\}_{l=0}^N,\{c_l\}_{l=0}^N)$ are from $(0,\delta)$ and
are pairwise distinct. \end{lemma}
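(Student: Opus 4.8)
The plan is to construct the families $\{a_l\}_{l=0}^N$ and $\{c_l\}_{l=0}^N$ directly by choosing each number to be small and generic. Recall that the diagonal entries of $\mathbf{D}(\{a_l\}_{l=0}^N,\{c_l\}_{l=0}^N) = \mathbf{D}(a_0,c_0)\otimes\cdots\otimes\mathbf{D}(a_N,c_N)$ are exactly the $2^{N+1}$ products $\prod_{l=0}^N x_l$ where each $x_l\in\{a_l,c_l\}$. So the two requirements are: (i) every such product lies in $(0,\delta)$; and (ii) all $2^{N+1}$ of these products are pairwise distinct. Requirement (i) is easy: if all $a_l,c_l$ lie in $(0,\delta^{1/(N+1)})$, then each product of $N+1$ of them lies in $(0,\delta)$, and positivity is automatic. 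So the real content is arranging distinctness.

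First I would reduce distinctness to a countable-avoidance argument. Fix any positive numbers $a_0,c_0,\dots,a_N,c_N$ in $(0,\delta^{1/(N+1)})$ with $a_l\neq c_l$ for each $l$. For two distinct subset-choices $(x_l)$ and $(y_l)$, the products agree iff $\prod_l x_l = \prod_l y_l$, i.e.\ $\prod_{l\in S} a_l c_l^{-1} = \prod_{l\in S'} a_l c_l^{-1}$ for the index sets $S,S'$ where the two choices pick $a$ rather than $c$; taking logarithms, this is a nontrivial $\mathbb{Z}$-linear relation $\sum_l m_l \log(a_l/c_l) = 0$ with $m_l\in\{-1,0,1\}$ not all zero. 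Hence it suffices to choose the numbers so that the reals $\log(a_0/c_0),\dots,\log(a_N/c_N)$ are linearly independent over $\mathbb{Q}$ (equivalently, the ratios $a_l/c_l$ are multiplicatively independent). Such choices exist in abundance: one can pick the $a_l$ first, arbitrarily in $(0,\delta^{1/(N+1)})$, and then choose each $c_l$ in $(0,\delta^{1/(N+1)})$ successively so that $\log(a_l/c_l)$ avoids the countable set of $\mathbb{Q}$-linear combinations of the previously chosen $\log(a_k/c_k)$'s; at each step we are excluding a countable (hence measure-zero) set of values of $c_l$ from an interval, so a valid choice remains. This gives $\mathbb{Q}$-linear independence of the $N+1$ logarithms, which in turn forces all $2^{N+1}$ products to be distinct, establishing (ii).

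Alternatively, and perhaps cleaner to write, I would make the products manifestly distinct by a direct scaling construction: choose a small $t>0$, set $c_l = t^{\,3^l}$ and $a_l = t^{\,2\cdot 3^l}$ for $0\le l\le N$ (so $a_l\neq c_l$, both in $(0,\delta^{1/(N+1)})$ once $t$ is small enough). Then a product $\prod_{l=0}^N x_l$ equals $t^{e}$ where $e = \sum_{l=0}^N \varepsilon_l 3^l$ with $\varepsilon_l\in\{1,2\}$; since base-$3$ representations with digits in $\{1,2\}$ are unique, distinct choices give distinct exponents $e$, hence distinct products $t^e$ (for $0<t<1$ the map $e\mapsto t^e$ is injective). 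Choosing $t$ small enough that $a_N = t^{2\cdot 3^N}<\delta^{1/(N+1)}$ (which is the most restrictive of the $2(N+1)$ upper bounds) completes the construction, and positivity is clear.

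The only mild obstacle is bookkeeping: one must keep the $2(N+1)$ numbers bounded by $\delta^{1/(N+1)}$ while simultaneously securing distinctness of $2^{N+1}$ products, but the base-$3$ trick handles both at once, so there is no genuine difficulty. I would present the second construction as the proof, remarking that smallness of $t$ is the single condition to check and that uniqueness of the $\{1,2\}$-digit base-$3$ expansion gives the distinctness for free.
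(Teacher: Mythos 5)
Your proposal is correct, but it proves the distinctness by a different mechanism than the paper. The paper's proof is an induction on $n$: assuming the $2^{n+1}$ products $d_k$ built from $a_0,c_0,\dots,a_n,c_n$ are pairwise distinct, it picks $a_{n+1},c_{n+1}\in(0,\delta^{1/(N+1)})$ with $c_{n+1}<a_{n+1}d_k$ for all $k$; since every $d_k<1$, this forces $c_{n+1}/a_{n+1}<d_k/d_j$ for all $j,k$, so the new products $a_{n+1}d_k$ and $c_{n+1}d_j$ are again pairwise distinct. Your two arguments (choosing $\log(a_l/c_l)$ to be $\mathbb{Q}$-linearly independent by countable avoidance, or the explicit choice $c_l=t^{3^l}$, $a_l=t^{2\cdot 3^l}$ with uniqueness of base-$3$ digit strings) achieve the same end; the second is arguably the cleanest of the three, being fully explicit with a one-line verification, while the paper's induction is the most elementary in that it uses nothing beyond monotonicity. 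One small slip in your write-up: for $0<t<1$ the map $m\mapsto t^m$ is decreasing, so the largest of your $2(N+1)$ numbers is $c_0=t$, not $a_N=t^{2\cdot 3^N}$; the binding constraint is therefore $t<\delta^{1/(N+1)}$ (which of course then implies all the other bounds, since $t^m\le t$ for $m\ge 1$), not the condition on $a_N$ as you state in the final paragraph. This does not affect the validity of the construction — your earlier phrase ``once $t$ is small enough'' already covers it — but the parenthetical identifying the most restrictive bound should be corrected.
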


\begin{proof} The elements of a diagonal matrix 
$\text{\bf D}(\{a_l\}_{l=0}^N,\{c_l\}_{l=0}^N)$ are the products 
of $N+1$ factors, each of which is equal to $a_l$ or $c_l$ for some
$l$, $0\leq l \leq N$. 
Therefore, if $0<a_l<\delta^{1/(N+1)}$ and 
$0<c_l<\delta^{1/(N+1)}$ for $0\leq l\leq N$, then 
the elements of a diagonal matrix
$\text{\bf D}(\{a_l\}_{l=0}^N,\{c_l\}_{l=0}^N)$ are from $(0,\delta)$.

The choice of $\{a_l\}_{l=0}^N$, $\{c_l\}_{l=0}^N\subset(0,\delta^{1/(N+1)})$ is 
such that the elements of a diagonal matrix $\text{\bf D}(\{a_l\}_{l=0}^N,\{c_l\}_{l=0}^N)$ 
are pairwise distinct by induction. Base of induction: $a_0\neq c_0$. 
Suppose $1\leq n\leq N$, and suppose that $\{a_l\}_{l=0}^n$, 
$\{c_l\}_{=0}^n\subset(0,\delta^{1/(N+1)})$ 
are such that the elements $\{d_k\}_{k=1}^{2^{n+1}}$
of the matrix $\text{\bf D}(a_0, c_0)\otimes\ldots\otimes
 \text{\bf D}(a_n, c_n)$ are pairwise distinct.
Let $a_{n+1}$, $c_{n+1}\subset(0,\delta^{1/(N+1)})$ 
be such that  $c_{n+1}<a_{n+1} d_k$ for all $k$, $1\leq k\leq 2^{n+1}$.
The elements of $\text{\bf D}(a_0, c_0)\otimes\ldots\otimes
 \text{\bf D}(a_{n+1}, c_{n+1})$ are 
$a_{n+1} d_k$ and $c_{n+1} d_k$, $1\leq k\leq 2^{n+1}$.
It is easy to see that  $\{a_l\}_{l=0}^{n+1}$ and 
$\{c_l\}_{l=0}^{n+1}$ satisfy the inductional assumption again. \end{proof}

Suppose $\mathcal H$ is a Hilbert space,  $N\geq 1$, and 
$A_j\colon\mathcal H\to\mathcal H$, $0\leq j\leq N$, are operators. 
Define  
a Hankel operator $\Gamma(\{A_j\}_{j=0}^N)$ 
on $\oplus_{j=0}^N\mathcal H$ as follows: 
 \begin{equation} \label{55} 
\Gamma(\{A_j\}_{j=0}^N) = \begin{pmatrix} 
\mathbb  O & \mathbb  O &  \ldots & \mathbb  O & A_N \\  
\mathbb  O & \mathbb  O &  \ldots & A_N &  A_{N-1} \\  
\ldots & \ldots & \ldots & \ldots & \ldots \\  
\mathbb  O & A_N & \ldots & A_{N-1} & A_1 \\  
 A_N & A_{N-1}& \ldots & A_1 & A_0 \end{pmatrix}.  \end{equation} 
Put $\Gamma(k,\{A_j\}_{j=0}^N)= 
\Gamma(A_k, \ldots, A_N, \mathbb  O, \ldots, \mathbb  O)$, $0\leq k\leq N$. 
Then  
 \begin{equation} \label{56} \text{\bf S}_{\mathcal H N}^\ast \Gamma(\{A_j\}_{j=0}^N)= 
\Gamma(\{A_j\}_{j=0}^N)\text{\bf S}_{\mathcal H N} \ \  \text{  and } \ \  
\Gamma(\{A_j\}_{j=0}^N)\text{\bf S}_{\mathcal H N}^k=\Gamma(k,\{A_j\}_{j=0}^N),  \end{equation} 
where  $\text{\bf S}_{\mathcal H N}$ is defined in  \eqref{41}.

\smallskip

Define the operator $Q_N(\{A_j\}_{j=0}^N)$ on 
$\bigoplus_{j=0}^N\mathcal H\oplus\bigoplus_{j=0}^N\mathcal H$
as follows:
 \begin{equation} \label{57}  Q_N(\{A_j\}_{j=0}^N) = \begin{pmatrix} \text{\bf S}_{\mathcal H N}^\ast & 
\Gamma(\{A_j\}_{j=0}^N) \\  
\mathbb  O & \text{\bf S}_{\mathcal H N}\end{pmatrix}.  \end{equation} 
Operators from \eqref{57} are analogs of 
 {\it Foguel--Hankel} operators 
 (truncated shifts are used in the construction
 instead of the forward and backward shifts).
 We will call such operators ``truncated" Foguel--Hankel operators
 in this paper.
It is easy to see that   $$(Q_N(\{A_j\}_{j=0}^N))^{N+2} = \mathbb  O \ \ 
\text{ and } \ \ (Q_N(\{A_j\}_{j=0}^N))^N \neq \mathbb  O.$$
  (Note that
$(Q_N(\{A_j\}_{j=0}^N))^{N+1} = \mathbb  O$
if and only if $A_N=\mathbb  O$). Also, 
$$\mu_{Q_N(\{A_j\}_{j=0}^N)}\geq \mu_{\text{\bf S}_{\mathcal H N}}=\dim \mathcal H. $$ 

The following is proved in [DP, Theorem 3.1 and Remark 3.6], 
see also [Pi], [Pe, Ch.15.3]. 
Let $\{\alpha_{Nj}\}_{j=0}^N\subset \mathbb  C$.
Put  \begin{equation} \label{58} R_N(\{\alpha_{Nj}\}_{j=0}^N) = 
Q_N(\{\alpha_{Nj}\text{\bf C}_{N j}\}_{j=0}^N),  \end{equation} 
where $Q_N$ is defined in  \eqref{57}, and $\text{\bf C}_{N j}$ are defined in  \eqref{51}. 
The operator $R_N$ acts on $\bigoplus_{j=0}^N\mathbb  C^{2^{N+1}}\oplus\bigoplus
_{j=0}^N\mathbb  C^{2^{N+1}}$.
Then 
$$M_{pb}(R_N)^2\leq
3(4\sup_{1\leq n\leq N+1} n^2\sum_{j=n-1}^N|\alpha_{Nj}|^2 + 1)$$
and $$M_{cpb}(R_N)^2\geq \frac{1}{4}\sum_{j=1}^{N+1} j^2|\alpha_{N,j-1}|^2.$$
Thus, if $\{\alpha_{Nj}\}_{j=0}^N$, $N\geq 1$, are such that 
 \begin{equation} \label{59} \sup_N \sup_{1\leq n\leq N+1} n^2\sum_{j=n-1}^N|\alpha_{Nj}|^2 < \infty
\ \ \text{ and } \ \ \sup_N\sum_{j=1}^{N+1}j^2|\alpha_{N,j-1}|^2=\infty, \end{equation} 
 then 
$\sup_N M_{pb}(R_N)< \infty$ and $\sup_N M_{cpb}(R_N)= \infty$.
For example, one can take $\alpha_{Nj}=\alpha_j=(j+1)^{-3/2}$, $0\leq j\leq N$, 
for every $N$.

Let $\{\alpha_{Nj}\}_{j=0}^N\subset \mathbb  C$, $N\geq 1$, satisfy \eqref{59}, 
and 
let $R_N = R_N(\{\alpha_{Nj}\}_{j=0}^N)$ be defined in \eqref{58}.
By Proposition \ref{prop11}, $R = \oplus_N R_N$ 
is polynomially bounded 
and is not similar to a contraction. The minimal function of $R$ is
the least common multiple of the minimal functions of $R_N$, 
that is,  of $\chi^{N+2}$ or $\chi^{N+1}$, where $\chi(z)=z$, $z\in\mathbb  D$, 
therefore,  the zero function.
Thus, $R$ is not a $C_0$-operator.
Let $\{w_N\}_N\subset\mathbb  D$ be such that $\sum_N (N+2)(1-|w_N|)<\infty$.
Set $B = \prod_Nb_{w_N}^{N+2}$ and $R = \oplus_N\beta_{w_N}(R_N)$, 
where $\beta_w$ is defined in \eqref{13} and $b_w$ is a Blaschke factor.
By Corollary \ref{cor12}, $R$ 
is polynomially bounded 
and is not similar to a contraction. 
Also, $B(R)=\mathbb  O$. 
But $\mu_R\geq\mu_{\beta_{w_N}(R_N)}=\mu_{R_N}\geq 2^{N+1}$ for every $N$
(see \eqref{15} and \eqref{16}),
therefore, $\mu_R=\infty$.

\section{ Perturbation of ``truncated" Foguel--Hankel operators}

Before  formulating the main result of this section, 
we introduce the following notation.
Set $q_n(\lambda,\nu) =
\sum_{k=0}^{n-1}\lambda^{n-1-k}\nu^k$, $n\geq 1$, $\lambda$, 
$\nu\in\mathbb  C$, $q_0=0$.
If $\lambda\neq\nu$, then $q_n(\lambda,\nu) = \frac{\lambda^n - \nu^n}{\lambda - \nu}$.
Furthermore, 
$$ \lambda^n + q_n(\lambda,\nu)\nu = q_{n+1}(\lambda,\nu),$$ 
 \begin{equation} \label{61} \begin{aligned}
&\textstyle{\frac{n!}{k!(n-k)!}(\lambda^{n-k} + q_{n-k}(\lambda,\nu)\nu)=
 \frac{n!}{k!(n-k)!}q_{n+1-k}(\lambda,\nu)} \\ 
&=\textstyle{\frac{(n+1)!}{k!(n+1-k)!}q_{n+1-k}(\lambda,\nu)-\textstyle{\frac{n!}{(k-1)!(n+1-k)!}}q_{n+1-k}(\lambda,\nu), \ \ \ 1\leq k\leq n.}  \end{aligned}\end{equation} 

\begin{lemma}\label{lem61} Suppose $N\geq 1$, $0<\delta<1$, 
$p$ is (an analytic) polynomial,
and $\widehat p(n)=0$ for $0\leq n\leq N+1$.
Then
$$\biggl|\sum_{n\geq N+2} \textstyle{
\widehat p(n)\frac{n!}{k!(n-k)!}q_{n-k}(\lambda, \nu)\biggr|
\leq \frac{(k+1)(k+2)}{(1-\delta)^{(k+3)}}\delta\|p\|_\infty}$$
for $\lambda$, $\nu\in\mathbb  C$, $|\lambda|\leq\delta$, 
$|\nu|\leq\delta$, $0\leq k\leq N$.\end{lemma}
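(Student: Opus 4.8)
The idea is to express the sum $\sum_{n\geq N+2}\widehat p(n)\frac{n!}{k!(n-k)!}q_{n-k}(\lambda,\nu)$ as the value at a suitable point of a contour integral involving $p$, and then estimate crudely. Recall that $\widehat p(n)=\frac{1}{2\pi}\int_{\mathbb T}p(\zeta)\overline\zeta^{\,n}\,|d\zeta|$, and that $\frac{n!}{k!(n-k)!}q_{n-k}(\lambda,\nu)$ is, up to the factor $\frac1{k!}$, the $k$-th derivative of $z\mapsto z^n$ evaluated via the divided-difference type identity $q_{n-k}(\lambda,\nu)=\sum_{j}\lambda^{\cdot}\nu^{\cdot}$. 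More precisely I would observe that for $|z|=1$ and $|\lambda|,|\nu|\le\delta<1$ the generating identity
\[
\sum_{n\ge 0}\frac{n!}{k!(n-k)!}q_{n-k}(\lambda,\nu)\,z^{-n}
=\frac{1}{k!}\,\frac{\partial^{k}}{\partial t^{k}}\Big|_{t=0}\;\frac{1}{(1-(t+\lambda)z^{-1})}\cdot(\text{correction from }\nu)
\]
holds, or more transparently that $\sum_{n\ge k}\frac{n!}{k!(n-k)!}q_{n-k}(\lambda,\nu)z^{-n}$ equals a rational function of $z$ whose only poles lie at $z=\lambda$ and $z=\nu$, hence inside $|z|<\delta$. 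Since $\widehat p(n)=0$ for $0\le n\le N+1$ and $p$ is a polynomial, $\sum_{n\ge N+2}\widehat p(n)z^{-n}$ differs from $\sum_{n\ge 0}\widehat p(n)z^{-n}$ only by terms already killed, so one may pair $p$ against that rational kernel.

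The cleanest route is: first, use the closed form $q_m(\lambda,\nu)=\frac{\lambda^m-\nu^m}{\lambda-\nu}$ (for $\lambda\ne\nu$; the general case by continuity) to write
\[
\sum_{n\ge N+2}\widehat p(n)\tfrac{n!}{k!(n-k)!}q_{n-k}(\lambda,\nu)
=\frac{1}{\lambda-\nu}\Big(G_k(\lambda)-G_k(\nu)\Big),\qquad
G_k(w)=\sum_{n\ge N+2}\widehat p(n)\tfrac{n!}{k!(n-k)!}w^{\,n-k}.
\]
Next, recognize $G_k(w)=\frac{1}{k!}\big(\sum_{n\ge N+2}\widehat p(n)w^{n}\big)^{(k)}=\frac{1}{k!}q_k^{\,(k)}(w)$ where $q_k(w)=\sum_{n\ge N+2}\widehat p(n)w^n$ — but this tail series is exactly $p(w)$ minus a polynomial of degree $\le N+1$, i.e. $p(w)=\sum_{n\ge N+2}\widehat p(n)w^n$ itself by the hypothesis $\widehat p(n)=0$ for $n\le N+1$. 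So in fact $G_k(w)=\frac{1}{k!}p^{(k)}(w)$, and the whole sum equals $\frac{1}{k!}\cdot\frac{p^{(k)}(\lambda)-p^{(k)}(\nu)}{\lambda-\nu}$ (again the $\lambda=\nu$ case is the derivative, by continuity). This is the key simplification and I expect it to be the one genuinely delicate bookkeeping point — making sure the index shift and the vanishing of low Taylor coefficients exactly cancel the polynomial discrepancy.

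Finally I estimate. By Cauchy's formula, for $|\lambda|\le\delta<1$,
\[
|p^{(k)}(\lambda)|=\Big|\frac{k!}{2\pi i}\int_{|\zeta|=1}\frac{p(\zeta)}{(\zeta-\lambda)^{k+1}}\,d\zeta\Big|\le\frac{k!}{(1-\delta)^{k+1}}\|p\|_\infty,
\]
and likewise a bound on $\frac{p^{(k)}(\lambda)-p^{(k)}(\nu)}{\lambda-\nu}$ comes from the Cauchy integral representation $\frac{p^{(k)}(\lambda)-p^{(k)}(\nu)}{\lambda-\nu}=\frac{k!}{2\pi i}\int_{|\zeta|=1}\frac{p(\zeta)}{(\zeta-\lambda)^{k+1}(\zeta-\nu)}\,d\zeta$ plus symmetric terms; carefully counting there are $k+1$ such terms (one for each way of distributing an extra factor among the $k+1$ copies of $(\zeta-\cdot)$), each of absolute value at most $\frac{k!}{(1-\delta)^{k+2}}\|p\|_\infty$. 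Dividing by $k!$ gives the bound $\frac{k+1}{(1-\delta)^{k+2}}\|p\|_\infty$, which is slightly stronger than claimed; to land exactly on $\frac{(k+1)(k+2)}{(1-\delta)^{k+3}}\delta\,\|p\|_\infty$ one uses $\frac{1}{1-\delta}\ge 1$ and that one may extract a factor $\delta$ because the tail of $p$ starts at order $N+2\ge k+2$, so $p^{(k)}$ vanishes to order $\ge N+2-k\ge 2$ at $0$ and hence $|p^{(k)}(\lambda)|\le \delta^{2}\cdot(\cdots)$ — more cleanly, write $p^{(k)}(\lambda)=\lambda^{2}h(\lambda)$ on $|\lambda|\le\delta$ for analytic $h$ with $\|h\|_{\infty,\delta}$ controlled, forcing the extra $\delta$; the polynomial factor $(k+1)(k+2)$ then absorbs the constants from differentiating and from the two-point estimate. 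The main obstacle is purely this last constant-chasing step — confirming that the tail condition $\widehat p(n)=0$ for $n\le N+1$ supplies the extra power of $\delta$ uniformly — rather than anything conceptual.
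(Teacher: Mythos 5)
Your reduction of the sum to $\frac{1}{k!}\,\frac{p^{(k)}(\lambda)-p^{(k)}(\nu)}{\lambda-\nu}$ (and to $\frac{1}{k!}p^{(k+1)}(\lambda)$ when $\lambda=\nu$) is exactly the paper's first step and is correct: since $\widehat p(n)=0$ for $0\le n\le N+1$, the tail series is $p$ itself, and the coefficients $\frac{n!}{k!(n-k)!}q_{n-k}(\lambda,\nu)$ are precisely the divided differences of the monomials appearing in $\frac{1}{k!}p^{(k)}$.

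The gap is in the estimation step. Your primary bound $\frac{k+1}{(1-\delta)^{k+2}}\|p\|_\infty$, obtained from the partial-fraction decomposition of the Cauchy kernel, is \emph{not} ``slightly stronger than claimed'': it is missing the factor $\delta$, and that factor is the entire content of the lemma --- it is what makes the perturbation in Theorem \ref{thm62} small when $\delta$ is small. You do correctly locate the source of the missing $\delta$ (the vanishing of the low Taylor coefficients), but the extraction is only sketched: writing $p^{(k)}(\lambda)=\lambda^2h(\lambda)$ and forming the divided difference leaves $(\lambda+\nu)h(\lambda)+\nu^2\frac{h(\lambda)-h(\nu)}{\lambda-\nu}$, so you still need bounds on $h$ and on its own divided difference, i.e.\ another round of the same machinery with constants to track, which you have not done. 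The paper gets the constant directly with two applications of the segment (mean-value) bound: first $\bigl|\frac{p^{(k)}(\lambda)-p^{(k)}(\nu)}{\lambda-\nu}\bigr|\le\sup_{|z|\le\delta}|p^{(k+1)}(z)|$; then, because $p^{(k+1)}(0)=0$ for $0\le k\le N$, $|p^{(k+1)}(z)|=|p^{(k+1)}(z)-p^{(k+1)}(0)|\le\delta\sup_{|w|\le\delta}|p^{(k+2)}(w)|$; and finally the Cauchy estimate $\sup_{|w|\le\delta}|p^{(k+2)}(w)|\le\frac{(k+2)!}{(1-\delta)^{k+3}}\|p\|_\infty$. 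Dividing by $k!$ gives exactly $\frac{(k+1)(k+2)}{(1-\delta)^{k+3}}\delta\|p\|_\infty$. You should replace your estimate with this argument (or complete the $h$-bookkeeping); as written the proof is not finished.
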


\begin{proof} We have $$\sum_{n\geq N+2} 
\widehat p(n)\textstyle{\frac{n!}{k!(n-k)!}q_{n-k}}(\lambda, \nu)
=
\frac{1}{k!}\frac{p^{(k)}(\lambda)-p^{(k)}(\nu)}
{\lambda - \nu}, \ \ \text{ if } \lambda\neq\nu, $$
 and  $$\sum_{n\geq N+2} 
\widehat p(n)\textstyle\frac{n!}{k!(n-k)!}q_{n-k}(\lambda, \lambda) = 
\frac{1}{k!}p^{(k+1)}(\lambda). $$
If $0<\delta<1$ and $|\lambda|<\delta$, $|\nu|<\delta$, $\lambda\neq\nu$,
then $$\biggl|\frac{p^{(k)}(\lambda)-p^{(k)}(\nu)}
{\lambda - \nu}\biggr|\leq\sup_{|z|\leq\delta}|p^{(k+1)}(z)|
 \leq\sup_{|z|\leq\delta}|p^{(k+1)}(z)-p^{(k+1)}(0)|$$
$$ 
\leq\sup_{|z|\leq\delta}|p^{(k+2)}(z)|\delta
\leq \frac{(k+2)!\delta}{(1-\delta)^{(k+3)}}\|p\|_\infty
\ \  \text{ for } 0\leq k\leq N, $$
because $p^{(k+1)}(0) = 0$ for $0\leq k\leq N$.

If $\lambda=\nu$, the conclusion of the lemma follows from the estimate 
of $p^{(k+1)}$ above. \end{proof}

The following theorem is the main result of this section.

\begin{theorem}\label{thm62} Suppose $N\geq 1$, $\mathcal H$ is a Hilbert space, 
$D$, $D_\star$, $D_j$, $A_j$ are operators on $\mathcal H$
such that $D_jA_j = A_jD$, and $D_\star$, $D_j$  for  $0\leq j\leq N$
are simultaneously diagonalizable with respect to some orthonormal basis.
Define the operators $R$ and $T$ on the space 
$\bigoplus_{n=0}^N\mathcal H\oplus\bigoplus_{n=0}^N\mathcal H$ as follows:
$$ R = Q_N (\{A_j\}_{j=0}^N) = 
\begin{pmatrix} \text{\bf S}_{\mathcal H N}^\ast & \Gamma(\{A_j\}_{j=0}^N) \\ 
\mathbb  O & \text{\bf S}_{\mathcal H N} \end{pmatrix} $$
and
$$ T = R + \textstyle\bigoplus_{n=0}^N D_\star \oplus\bigoplus _{n=0}^N D =
\begin{pmatrix} \bigoplus_{n=0}^N D_\star + \text{\bf S}_{\mathcal H N}^\ast & 
\Gamma(\{A_j\}_{j=0}^N) \\ 
\mathbb  O & \bigoplus_{n=0}^N D + 
 \text{\bf S}_{\mathcal H N} \end{pmatrix}, $$
where 
$\Gamma(\{A_j\}_{j=0}^N)$ is defined in  \eqref{55} and 
$\text{\bf S}_{\mathcal H N}$ is defined in \eqref{41}.
For a polynomial $p$ put 
$$\textstyle R_{(p)}\! =\! P_{\bigoplus_{n=0}^N\mathcal H\oplus\{0\}} 
p(R)|_{\{0\}\oplus\bigoplus_{n=0}^N\mathcal H}
\  \text{ and }  \ 
T_{(p)}\! =\! P_{\bigoplus_{n=0}^N\mathcal H\oplus\{0\}}
p(T)|_{\{0\}\oplus\bigoplus_{n=0}^N\mathcal H},$$
that is, $R_{(p)}$ and $T_{(p)}$ are the operators 
from the right upper corner in the matrix forms of $p(R)$ and $p(T)$,
respectively.

Let $\varepsilon > 0$. Then there exists $\delta>0$ which depends on $N$,
$\{A_j\}_{j=0}^N$, and $\varepsilon$, such that if $\|D_\star\|<\delta$ 
and $\|D_j\|<\delta$, $0\leq j\leq N$, then 
$$\|T_{(p)} - R_{(p)}\|\leq\varepsilon\|p\|_\infty \  
\text{ for every polynomial }\ p.$$\end{theorem}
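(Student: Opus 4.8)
The plan is to compare $p(T)$ and $p(R)$ term by term in their monomial expansions. Write $p(z)=\sum_n\widehat p(n)z^n$; since $R_{(p)}$ and $T_{(p)}$ are linear in $p$, and since the difference $T_{(p)}-R_{(p)}$ clearly vanishes for the constant term and (as will be checked) for monomials $z^n$ with $n\le N+1$, one reduces to estimating the contribution of the tail $\sum_{n\ge N+2}\widehat p(n)z^n$. The first step is therefore to compute $T^n$ and $R^n$ explicitly in the $2\times2$ block form. Both $R$ and $T$ are block upper triangular with nilpotent diagonal blocks built from $\mathbf{S}_{\mathcal H N}^\ast$, $\mathbf{S}_{\mathcal H N}$ (for $R$) and their perturbations by the diagonal operators $\bigoplus D_\star$, $\bigoplus D$ (for $T$). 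The right upper corner of $p(R)$ is a sum over $a+b=n-1$ of $(\mathbf{S}_{\mathcal H N}^\ast)^a\,\Gamma(\{A_j\})\,\mathbf{S}_{\mathcal H N}^b$, and using \eqref{56} this collapses to a manageable expression involving the $\Gamma(k,\{A_j\})$. For $T$ the analogous sum has each $\mathbf{S}_{\mathcal H N}^\ast$ replaced by $\bigoplus D_\star+\mathbf{S}_{\mathcal H N}^\ast$ and each $\mathbf{S}_{\mathcal H N}$ by $\bigoplus D+\mathbf{S}_{\mathcal H N}$; expanding these binomials (the pieces commute appropriately because $D_\star$, $D$ are scalar-diagonal on $\mathcal H$ and commute with the shift structure up to the index bookkeeping already encoded in $\Gamma(k,\cdot)$) produces polynomial-in-$(D_\star,D)$ coefficients multiplying the same $\Gamma(k,\{A_j\})$ blocks, with the constant (zeroth-order in $D_\star,D$) term giving back exactly $R_{(p)}$.

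The second step is to identify the coefficients that multiply $\Gamma(k,\{A_j\})$ in $T_{(p)}-R_{(p)}$. Since $D_\star,D$ are diagonalized on a common orthonormal basis of $\mathcal H$ with eigenvalues $\nu_j$ and $\lambda_j$ say, on each joint eigenvector the binomial expansion yields sums of the shape $\sum_{n\ge N+2}\widehat p(n)\frac{n!}{k!(n-k)!}q_{n-k}(\lambda,\nu)$ — this is precisely the quantity estimated in Lemma \ref{lem61}, where the appearance of $q_{n-k}$ comes from summing a geometric-type series $\sum_{a+b=n-1-k}\nu^a\lambda^b$ that arises when one collects the cross terms between the $D_\star$-part on the ``co-isometric'' side and the $D$-part on the ``shift'' side. (The identities \eqref{61} are the algebraic bookkeeping that lets one regroup these binomial coefficients; the condition $\widehat p(n)=0$ for $0\le n\le N+1$ is exactly what one gets after subtracting off the low-order monomials, and it is the hypothesis of Lemma \ref{lem61}.) Applying Lemma \ref{lem61} with $|\lambda|,|\nu|\le\delta\ge\max(\|D\|,\|D_\star\|)$ bounds each such scalar by $\frac{(k+1)(k+2)}{(1-\delta)^{k+3}}\delta\|p\|_\infty$, uniformly over the joint eigenvectors, hence gives the same bound for the operator norm of the coefficient of each $\Gamma(k,\{A_j\})$.

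The third step assembles the estimate: there are only finitely many values of $k$ (namely $0\le k\le N$), and only finitely many $\Gamma(k,\{A_j\})$ blocks, each of norm controlled by $\{A_j\}_{j=0}^N$; moreover the binomial expansion produces a fixed finite number (depending only on $N$) of terms of each order in $(D_\star,D)$, and every term of order $\ge1$ carries at least one factor of size $\le\delta$, so by the previous step each such term is $\le C(N,\{A_j\})\,\delta\,\|p\|_\infty$ once $\delta<1/2$ (say). Summing the finitely many contributions and choosing $\delta$ small enough that $C(N,\{A_j\})\,\delta\le\varepsilon$ gives $\|T_{(p)}-R_{(p)}\|\le\varepsilon\|p\|_\infty$, as required; note $\delta$ depends only on $N$, $\{A_j\}_{j=0}^N$, and $\varepsilon$.

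I expect the main obstacle to be the first step: organizing the binomial expansion of $\big(\bigoplus D_\star+\mathbf S_{\mathcal H N}^\ast\big)^a\,\Gamma\,\big(\bigoplus D+\mathbf S_{\mathcal H N}\big)^b$ so that (i) the zeroth-order part is visibly $R_{(p)}$, (ii) the higher-order parts are expressed through the $\Gamma(k,\{A_j\})$ and through scalar sums of the exact form $q_{n-k}(\lambda,\nu)$ times binomial coefficients, and (iii) one sees that the truncation $\widehat p(n)=0$ for $n\le N+1$ really does kill all terms not covered by Lemma \ref{lem61} (because $\mathbf S_{\mathcal H N}^{N+1}=\mathbb O$, only $q_{n-k}$ with $n-k$ up to $N+1$-ish survive on the shift sides, and the remaining range is exactly where Lemma \ref{lem61} applies). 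Once that combinatorial normal form is in hand, Lemma \ref{lem61} and a crude counting of finitely many terms finish the proof mechanically.
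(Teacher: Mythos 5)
Your overall architecture (an explicit normal form for $T_{(n)}$ via a binomial/inductive expansion producing coefficients $\tfrac{n!}{k!(n-k)!}q_{n-k}(D_\star,D_j)$ on the blocks $\Gamma(k,\{A_j\})$, followed by Lemma \ref{lem61} for the high-degree part) is the same as the paper's. But your first reduction contains a genuine error: it is \emph{not} true that $T_{(n)}-R_{(n)}=\mathbb O$ for $2\le n\le N+1$. Already for $n=2$ one computes
$$T_{(2)}=\Gamma\bigl(\{(D_\star+D_j)A_j\}_{j=0}^N\bigr)+\Gamma\bigl(1,\{2A_j\}_{j=0}^N\bigr),
\qquad R_{(2)}=\Gamma\bigl(1,\{2A_j\}_{j=0}^N\bigr),$$
so the difference is $\Gamma(\{(D_\star+D_j)A_j\})$, which is nonzero in general --- it is only \emph{small}, of norm $O(\delta)$. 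The correct statement, proved in the paper by induction, is
$$T_{(n)}=\sum_{k=0}^{\min(n-1,N)}\Gamma\Bigl(k,\Bigl\{\tfrac{n!}{k!(n-k)!}\,q_{n-k}(D_\star,D_j)A_j\Bigr\}_{j=0}^N\Bigr),$$
in which only the top term $k=n-1$ (where $q_1=1$) reproduces $R_{(n)}=\Gamma(n-1,\{nA_j\})$; every other term carries a factor $q_{n-k}(D_\star,D_j)$ with $n-k\ge2$, of norm at most $(N+1)\delta$. Consequently you cannot reduce to the tail $\sum_{n\ge N+2}$: the low-degree part $\sum_{n=2}^{N+1}\widehat p(n)\,(T_{(n)}-R_{(n)})$ is a genuinely nonzero contribution that must be estimated separately, using $|\widehat p(n)|\le\|p\|_\infty$ and the fact that there are only $N$ such monomials, yielding a bound of the form $C(N,\{A_j\})\,\delta\,\|p\|_\infty$. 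Your step three contains exactly the right idea for this (``every term of order $\ge1$ in $(D_\star,D)$ carries a factor $\le\delta$''), but as written it is applied only to the tail, so the low-degree contribution is silently dropped.

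A second, smaller point: when you apply Lemma \ref{lem61} to the tail you state the bound with $\|p\|_\infty$ on the right, but the lemma, applied to $p_{2N}=\sum_{n\ge N+2}\widehat p(n)\chi^n$, yields $\|p_{2N}\|_\infty$, which is not a priori dominated by $\|p\|_\infty$. One must insert an estimate such as $\|p_{2N}\|_\infty\le c\log N\,\|p\|_\infty$ (the paper cites Titchmarsh) or the cruder $\|p_{2N}\|_\infty\le(N+3)\|p\|_\infty$; either is harmless because $\delta$ is allowed to depend on $N$, but the step cannot be omitted.
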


\begin{proof} Recall that $\chi(z)=z$, $z\in\mathbb  D$. To simplify notation set 
$\text{\bf S} = \text{\bf S}_{\mathcal H N}$,
$K_\star = \oplus_{n=0}^N D_\star$, $K = \oplus_{n=0}^N D$, 
$T_{(n)} = T_{(\chi^n)}$, $R_{(n)} = R_{(\chi^n)}$. 
We have $T_{(0)} =R_{(0)} =\mathbb  O$ and  $T_{(1)} =R_{(1)} = \Gamma(\{A_j\}_{j=0}^N)$.

Since  $\text{\bf S}^\ast K_\star =K_\star\text{\bf S}^\ast$ and
 $\text{\bf S}^{N+1}=\mathbb  O$,
 \begin{equation} \label{62} (\text{\bf S}^\ast + K_\star)^n = 
\sum_{k=0}^{\min(n, N)}\textstyle\frac{n!}{k!(n-k)!}K_\star^{n-k}(\text{\bf S}^\ast)^k.  \end{equation}
If the operators $L_j$ on $\mathcal H$ are such that 
 $D_jL_j = L_jD$, $0\leq j\leq N$, then
 \begin{equation} \label{63} \Gamma(n,\{L_j\}_{j=0}^N)K = \Gamma(n,\{D_jL_j\}_{j=0}^N) 
\ \ \text{ for } n\geq 0. \end{equation}
Also, for arbitrary operators $L_j$ on $\mathcal H$, $0\leq j\leq N$, 
 \begin{equation} \label{64} K_\star^k\Gamma(n,\{L_j\}_{j=0}^N)= \Gamma(n,\{D_\star^k L_j\}_{j=0}^N) 
\ \ \text{ for } n\geq 0, \ k\geq 0.  \end{equation}
From the equality 
$R_{(n)}=\sum_{k=0}^{n-1}(\text{\bf S}^\ast)^{n-1-k}\Gamma(\{A_j\}_{j=0}^N)\text{\bf S}^k$ 
and  \eqref{56}, taking into account that $\text{\bf S}^{N+1}=\mathbb  O$,
we obtain that 
 \begin{equation} \label{65} R_{(n)} = \Gamma(n-1,\{nA_j\}_{j=0}^N), \ \ 1\leq n\leq N+1, 
\ \ \text{ and } R_{(n)} = \mathbb  O, \ \ n\geq N+2.   \end{equation}

Since $D_\star$ and 
 $D_j$  for  $0\leq j\leq N$
are simultaneously diagonalizable with respect to some orthonormal basis,
 we have that
$D_\star D_j = D_j D_\star$, $0\leq j\leq N$, therefore, 
$q_n(D_\star, D_j)$ is well defined and \eqref{61} is fulfilled for $D_\star$ and $D_j$ 
instead of $\lambda$ and $\nu$.
We prove by induction that 
 \begin{equation} \label{66} T_{(n)} = \sum_{k=0}^{\min(n-1,N)}
\Gamma(k,\{\textstyle\frac{n!}{k!(n-k)!}q_{n-k}(D_\star, D_j)A_j\}_{j=0}^N), 
\ \  n\geq 1. \end{equation}

Base of induction: if $n=1$, then  \eqref{66} is clearly fulfilled.
Assume that   \eqref{66}  is fulfilled for some $n$, $1\leq n\leq N$. 
We have from  \eqref{56}, \eqref{61}, \eqref{62},  \eqref{63}, and \eqref{64} that
$$T_{(n+1)} = (\text{\bf S}^\ast + K_\star)^n\Gamma(\{A_j\}_{j=0}^N) + 
T_{(n)}(\text{\bf S} + K) $$
$$=
\biggl(\sum_{k=0}^n\textstyle\frac{n!}{k!(n-k)!}
K_\star^{n-k}(\text{\bf S}^\ast)^k\biggr)
\Gamma(\{A_j\}_{j=0}^N)$$
$$ + 
\sum_{k=0}^{n-1}
\Gamma(k,\{\textstyle\frac{n!}{k!(n-k)!}q_{n-k}(D_\star, D_j)A_j\}_{j=0}^N)
(\text{\bf S}+K)$$
$$ = \sum_{k=0}^n
\Gamma(k,\{\textstyle\frac{n!}{k!(n-k)!}D_\star^{n-k}A_j\}_{j=0}^N) +
\displaystyle{\sum_{k=0}^{n-1}}
\Gamma(k+1,\{\textstyle\frac{n!}{k!(n-k)!}q_{n-k}(D_\star, D_j)A_j\}_{j=0}^N)$$
$$ +
\sum_{k=0}^{n-1}\Gamma(k,\{\textstyle\frac{n!}{k!(n-k)!}q_{n-k}(D_\star, D_j)D_jA_j\}_{j=0}^N)$$
$$ = 
\Gamma(\{D_\star^nA_j\}_{j=0}^N) + \Gamma(\{q_n(D_\star,D_j)D_jA_j\}_{j=0}^N)
 +\Gamma(n,\{A_j\}_{j=0}^N) +
\Gamma(n, n\{A_j\}_{j=0}^N)$$
$$ +\sum_{k=1}^{n-1}
\Gamma\biggl(k,\Bigl\{\Bigl(\textstyle\frac{n!}{k!(n-k)!}D_\star^{n-k} $$
$$+ 
\textstyle\frac{n!}{(k-1)!(n-k+1)!}q_{n-k+1}(D_\star, D_j)+
\frac{n!}{k!(n-k)!}q_{n-k}(D_\star, D_j)D_j\Bigr)A_j\Bigr\}_{j=0}^N\biggl)$$
$$ = \Gamma(\{q_{n+1}(D_\star, D_j)A_j\}_{j=0}^N) + 
\Gamma(n,\{(n+1)A_j\}_{j=0}^N)$$
$$ +
\sum_{k=1}^{n-1}
\Gamma(k,\{\textstyle\frac{(n+1)!}{k!(n+1-k)!}q_{n+1-k}(D_\star, D_j)A_j\}_{j=0}^N)$$
$$=\sum_{k=0}^n
\Gamma(k,\{\textstyle\frac{(n+1)!}{k!(n+1-k)!}q_{n+1-k}(D_\star, D_j)A_j\}_{j=0}^N).$$
Thus, \eqref{66} for $1\leq n\leq N+1$ is proved. 
Now assume that \eqref{66}  is proved for some $n$, $n\geq N+1$. 
Acting as in the case $1\leq n\leq N+1$, and taking into account that 
$\Gamma(N,\{L_j\}_{j=0}^N)\text{\bf S}=\mathbb  O$ for arbitrary operators 
$L_j$ on $\mathcal H$, we obtain that 
$$T_{(n+1)} = 
\sum_{k=0}^N
\Gamma(k,\{\textstyle\frac{n!}{k!(n-k)!}D_\star^{n-k}A_j\}_{j=0}^N)$$
$$ +
\sum_{k=0}^{N-1}
\Gamma(k+1,\{\textstyle\frac{n!}{k!(n-k)!}q_{n-k}(D_\star, D_j)A_j\}_{j=0}^N)$$
$$ +
\sum_{k=0}^N
\Gamma(k,\{\textstyle\frac{n!}{k!(n-k)!}q_{n-k}(D_\star, D_j)D_jA_j\}_{j=0}^N)$$
$$ = 
\Gamma(\{D_\star^nA_j\}_{j=0}^N) + 
\Gamma(q_n(D_\star, D_j)D_jA_j\}_{j=0}^N)$$
$$ + 
\sum_{k=1}^N
\Gamma\biggl(k,\Bigl\{\Bigl(\textstyle\frac{n!}{k!(n-k)!} D_\star^{n-k} $$
$$+
\textstyle\frac{n!}{(k-1)!(n-k+1)!}q_{n-k+1}(D_\star, D_j) +
\frac{n!}{k!(n-k)!}q_{n-k}(D_\star, D_j)D_j\Bigr)A_j\Bigr\}_{j=0}^N\biggr)$$
$$=\sum_{k=0}^N
\Gamma(k,\{\textstyle\frac{(n+1)!}{k!(n+1-k)!}q_{n+1-k}(D_\star, D_j)A_j\}_{j=0}^N)$$
(we apply \eqref{61}).

We infer from \eqref{65} and \eqref{66} that 
$$T_{(n)} = R_{(n)} + \sum_{k=0}^{n-2}
\Gamma(k,\{\textstyle\frac{n!}{k!(n-k)!}q_{n-k}(D_\star, D_j)A_j\}_{j=0}^N), 
\ \ 2\leq n\leq N+1.$$
Since $\|\Gamma\{L_j\}_{j=0}^N)\| \leq\sum_{j=0}^N\|L_j\|$ for arbitrary operators $L_j$, $0\leq j\leq N$,
 we have that 
 \begin{equation} \label{67} \begin{aligned}\|T_{(n)} - R_{(n)}\|\leq \sum_{k=0}^{n-2}
\|\Gamma(k,\{\textstyle\frac{n!}{k!(n-k)!}q_{n-k}(D_\star, D_j)A_j\}_{j=0}^N)\| \\ 
\leq\sum_{k=0}^{n-2}\sum_{j=k}^N
\textstyle\frac{n!}{k!(n-k)!}\|q_{n-k}(D_\star, D_j)\|\|A_j\|, 
\ \ 2\leq n\leq N+1. \end{aligned} \end{equation} 
If $0<\delta<1$ and $\|D_\star\|<\delta$, $\|D_j\|<\delta$, 
$0\leq j\leq N$,
then $\|q_{n-k}(D_\star, D_j)\|<(N+1)\delta$ for 
$0\leq k\leq n-2$, $2\leq n\leq N+1$.
We infer from \eqref{67}  then there exists a constant $C$ 
which depends on $N$ and $\|A_j\|$, $0\leq j\leq N$, such that 
 \begin{equation} \label{68} \|T_{(n)} - R_{(n)}\|\leq C\delta \ \ \text{ for every } n, 
\ \ 2\leq n\leq N+1,  \end{equation} 
if $0<\delta<1$ and $\|D_\star\|<\delta$, $\|D_j\|<\delta$, 
$0\leq j\leq N$.

For a polynomial $p$ put 
$$p_{1N} = \sum_{n=0}^{N+1} \widehat p(n) \chi^n \ \ \text{ and } \ \ 
p_{2N} = \sum_{n\geq N+2} \widehat p(n) \chi^n. $$
We infer from \eqref{68} that $$\|T_{(p_{1N})} - R_{(p_{1N})}\| =
\biggl\|\sum_{n=2}^{N+1} \widehat p(n)(T_{(n)} - R_{(n)})\biggr\| $$
$$\leq 
\sum_{n=2}^{N+1} |\widehat p(n)|\|T_{(n)} - R_{(n)}\| \leq
\sum_{n=2}^{N+1}\|p\|_\infty C\delta = NC\delta\|p\|_\infty,$$ 
that is, there exists a constant $C_1$ 
which depends on $N$ and $\|A_j\|$, $0\leq j\leq N$, such that
 \begin{equation} \label{69} \|T_{(p_{1N})} - R_{(p_{1N})}\|\leq C_1\delta\|p\|_\infty  \end{equation} 
for every polynomial $p$, if $0<\delta<1$ and $\|D_\star\|<\delta$, 
$\|D_j\|<\delta$, $0\leq j\leq N$.

We infer from \eqref{65}  that $R_{(p_{2N})}=\mathbb  O$ for every polynomial $p$.
From \eqref{66}  we have 
$$T_{(p_{2N})} =\sum_{n\geq N+2} \widehat p(n)T_{(n)} =
\sum_{n\geq N+2} \widehat p(n)\sum_{k=0}^N
\Gamma(k,\{\textstyle\frac{n!}{k!(n-k)!}q_{n-k}(D_\star, D_j)A_j\}_{j=0}^N
)$$
$$ =
\sum_{k=0}^N\Gamma\Bigl(k,\Bigl\{\sum_{n\geq N+2} 
\widehat p(n)\textstyle\frac{n!}{k!(n-k)!}q_{n-k}(D_\star, D_j)
A_j\Bigr\}_{j=0}^N\Bigr),$$
therefore, 
 \begin{equation} \label{610}\begin{aligned} \|T_{(p_{2N})}\|\leq
\sum_{k=0}^N\biggl\|\Gamma\Bigl(k,\Bigl\{\sum_{n\geq N+2} 
\widehat p(n)\textstyle\frac{n!}{k!(n-k)!}q_{n-k}
(D_\star, D_j)A_j\Bigr\}_{j=0}^N\Bigr)\biggr\| \\ 
\leq \sum_{k=0}^N\sum_{j=k}^N
\Bigl\|\sum_{n\geq N+2} 
\widehat p(n)
\textstyle\frac{n!}{k!(n-k)!}q_{n-k}(D_\star, D_j)\Bigr\|\|A_j\|.  \end{aligned} \end{equation} 

Denote by $\lambda_l$ and $\nu_{jl}$ the eigenvectors of 
$D_\star$ and $D_j$, $0\leq j\leq N$ (recall that $D_\star$ and $D_j$ for $0\leq j\leq N$
are simultaneously diagonalizable with respect to some orthonormal basis). 
We have  \begin{equation} \label{611} \biggl\|\sum_{n\geq N+2} 
\widehat p(n)\textstyle\frac{n!}{k!(n-k)!}q_{n-k}(D_\star, D_j)\biggr\| =
\displaystyle\sup_l \biggl|\sum_{n\geq N+2} 
\widehat p(n)\textstyle\frac{n!}{k!(n-k)!}q_{n-k}(\lambda_l, \nu_{jl})\biggr|,  \end{equation} 
where $0\leq k\leq N$. From  \eqref{610},  \eqref{611}, and Lemma \ref{lem61}   we conclude that 
there exists a constant $C$ 
which depends on $N$ and $\|A_j\|$, $0\leq j\leq N$, such that 
 \begin{equation} \label{612} \|T_{(p_{2N})}\|\leq C\delta\|p_{2N}\|_\infty \end{equation} 
for every polynomial $p$, if $0<\delta\leq 1/2$, $\|D_\star\|<\delta$ and $\|D_j\|<\delta$, 
$0\leq j\leq N$.  There exists a constant $c$ (which does not depend on $N$)
such that
$\|p_{2N}\|_\infty\leq c\log N\|p\|_\infty$ 
for every polynomial $p$, $N\geq 2$ [T, 13.4.3].
We conclude from  \eqref{612}  that 
there exists a constant $C_2$ 
which depends on $N$ and $\|A_j\|$, $0\leq j\leq N$, such that
 \begin{equation} \label{613} \|T_{(p_{2N})}\|\leq C_2\delta\|p\|_\infty \end{equation} 
for every polynomial $p$, if $0<\delta\leq 1/2$ and $\|D_\star\|<\delta$, 
$\|D_j\|<\delta$, $0\leq j\leq N$.

Finally, if $0<\delta\leq 1/2$ and $\|D_\star\|<\delta$, $\|D_j\|<\delta$, 
$0\leq j\leq N$, then from  \eqref{69}  and  \eqref{613}  
$$\|T_{(p)} - R_{(p)}\| = 
\|T_{(p_{1N})} - R_{(p_{1N})} + T_{(p_{2N})}\|$$
$$\leq 
\|T_{(p_{1N})} - R_{(p_{1N})}\| + \|T_{(p_{2N})}\|
\leq(C_1 + C_2)\delta\|p\|_\infty$$
for every polynomial $p$. 
Put $\delta = \varepsilon/(C_1 + C_2)$. The theorem is proved. \end{proof}

\begin{theorem}\label{thm63} There exist sequences of operators $\{T_N\}_N$
 and of finite Blaschke products $\{B_N\}_N$ with zeros
from $(0,1)$ such that
$\displaystyle\sup_N M_{pb}(T_N)<\infty$,
 $\displaystyle\sup_N M_{cpb}(T_N)=\infty$, 
and $T_N\approx T_{B_N}$.\end{theorem}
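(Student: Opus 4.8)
The plan is to realize $T_N$ as a small diagonal perturbation of the finite‑dimensional ``truncated'' Foguel--Hankel operators $R_N$ of Section~5: Theorem~\ref{thm62} guarantees that such a perturbation changes $p(R_N)$ by a controllable amount, Lemma~\ref{lem43} identifies the perturbed operator with some $T_{B_N}$, and Corollary~\ref{cor42} keeps the ``diagonal'' parts similar to contractions with constants near~$1$, so that $M_{pb}(T_N)$ stays bounded while the loss of uniform complete polynomial boundedness is inherited from the $R_N$.

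First I would fix coefficients $\{\alpha_{Nj}\}_{j=0}^N$ satisfying \eqref{59} (e.g.\ $\alpha_{Nj}=(j+1)^{-3/2}$) and set $R_N=R_N(\{\alpha_{Nj}\}_{j=0}^N)=Q_N(\{\alpha_{Nj}\text{\bf C}_{Nj}\}_{j=0}^N)$ as in \eqref{58}, acting on $\bigoplus_{j=0}^N\mathbb C^{2^{N+1}}\oplus\bigoplus_{j=0}^N\mathbb C^{2^{N+1}}$; by Section~5, $\sup_N M_{pb}(R_N)<\infty$, $\sup_N M_{cpb}(R_N)=\infty$, and $R_N^{N+2}=\mathbb O$. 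Since $\text{\bf S}_{\mathcal H N}$ is a contraction, the two diagonal blocks of $[p_{ij}(R_N)]_{i,j=1}^n$ have norm $\le\|[p_{ij}]\|_{H^\infty(\ell^2_n)}$, so $\sup\{\|[R_{(p_{ij})}]_{i,j=1}^n\|:\ \|[p_{ij}]\|_{H^\infty(\ell^2_n)}\le1,\ n\ge1\}\ge M_{cpb}(R_N)-2$; hence for every $N$ there are an integer $n_N$ (depending only on $N$) and a matrix polynomial $[p_{ij}^{(N)}]_{i,j=1}^{n_N}$ with $\|[p_{ij}^{(N)}]\|_{H^\infty(\ell^2_{n_N})}\le1$ and $m_N:=\|[R_{(p_{ij}^{(N)})}]\|\to\infty$.

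Next, for each $N$ I would set $\varepsilon_N=\min(1,1/n_N)$, let $\delta_N>0$ be the constant produced by Theorem~\ref{thm62} for this $N$, $\{A_j\}=\{\alpha_{Nj}\text{\bf C}_{Nj}\}$ and $\varepsilon=\varepsilon_N$, shrinking it further so that $\delta_N$ is also admissible in Lemma~\ref{lem41} and Corollary~\ref{cor42} for $N$ and $\varepsilon_N$. Using Lemma~\ref{lem51} I would pick $\{a_l\}_{l=0}^N,\{c_l\}_{l=0}^N\subset(0,\delta_N)$ so that the diagonal entries $\{d_k\}_{k=1}^{2^{N+1}}$ of $D:=\text{\bf D}(\{a_l\}_{l=0}^N,\{c_l\}_{l=0}^N)$ are pairwise distinct, put $D_j:=\text{\bf D}_j(\{a_l\}_{l=0}^N,\{c_l\}_{l=0}^N)$ (so $D_jA_j=A_jD$ by \eqref{54} and $\|D_j\|<\delta_N$), and choose a self‑adjoint diagonal $D_\star$ with pairwise distinct eigenvalues $\{\nu_k\}\subset(0,\delta_N)$ disjoint from $\{d_k\}$. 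Then $T_N:=R_N+\bigoplus_{n=0}^N D_\star\oplus\bigoplus_{n=0}^N D\approx T_{B_N}$ by Lemma~\ref{lem43}, with $B_N=\prod_{k=1}^{2^{N+1}}b_{d_k}^{N+1}b_{\nu_k}^{N+1}$ a finite Blaschke product with zeros in $(0,1)$. For $M_{pb}$: in the block form of $p(T_N)$ the off‑diagonal entry is $T_{(p)}$, and $\|T_{(p)}\|\le\|R_{(p)}\|+\varepsilon_N\|p\|_\infty\le(\sup_k M_{pb}(R_k)+1)\|p\|_\infty$ by Theorem~\ref{thm62}; the lower‑right block $\bigoplus_{n=0}^N D+\text{\bf S}_{\mathcal H N}$ and the adjoint $\bigoplus_{n=0}^N D_\star+\text{\bf S}_{\mathcal H N}$ of the upper‑left block are, by Corollary~\ref{cor42}, similar with constants $\le(1+\varepsilon_N)^2$ to orthogonal sums of the $C_0$‑contractions $T_{b_{\lambda}^{N+1}}$, so (using also that $p\mapsto p^\ast$ reflects the disc) their $H^\infty$‑calculi are bounded by $(1+\varepsilon_N)^2\|p\|_\infty$; with $\|p(T_N)\|\le\sqrt3\,\max(\cdots)$ this gives $\sup_N M_{pb}(T_N)<\infty$. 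Finally, compressing $[p_{ij}^{(N)}(T_N)]$ to its corner block and applying the scalar estimate of Theorem~\ref{thm62} entrywise gives $M_{cpb}(T_N)\ge\|[T_{(p_{ij}^{(N)})}]\|\ge m_N-n_N\varepsilon_N\ge m_N-1\to\infty$.

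The step I expect to be the real obstacle is the last one: the accuracy $\varepsilon_N$ of the perturbation must be fixed small compared with the size $n_N$ of the matrices witnessing $M_{cpb}(R_N)\to\infty$, and this is legitimate only because the constant produced by Theorem~\ref{thm62} depends on $N$ and $\varepsilon$ but not on the diagonal operators chosen afterwards, so the chain $n_N\leadsto\varepsilon_N\leadsto\delta_N\leadsto(D,D_j,D_\star)$ is not circular; a secondary point is that the $M_{pb}$‑estimate is genuinely uniform in $N$, which relies on Corollary~\ref{cor42} providing similarity constants arbitrarily close to~$1$ once the eigenvalues are small enough (Lemma~\ref{lem41}).
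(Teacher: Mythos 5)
Your proposal is correct and follows essentially the same route as the paper: perturb $R_N$ by small commuting diagonals, invoke Theorem \ref{thm62} to control $T_{(p)}-R_{(p)}$, Lemma \ref{lem43} for $T_N\approx T_{B_N}$, Corollary \ref{cor42} (with constants $(1+\varepsilon_N)^2$) for the uniform polynomial bound, and the smallness of $\varepsilon_N$ relative to the size of the witnessing matrix polynomials for $\sup_N M_{cpb}(T_N)=\infty$. The only cosmetic difference is that the paper takes the explicit $2^{N+1}\times 2^{N+1}$ families from [DP, Theorem 3.1] and normalizes via $\sup_N 2^{N+1}\varepsilon_N<\infty$, whereas you extract abstract $n_N\times n_N$ witnesses from the definition of $M_{cpb}(R_N)$ and set $\varepsilon_N\le 1/n_N$; both choices serve the same purpose and your ordering of the choices $n_N\leadsto\varepsilon_N\leadsto\delta_N\leadsto(D,D_j,D_\star)$ is indeed non-circular, exactly as in the paper.
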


\begin{proof} Let $\{\alpha_{Nj}\}_{j=0}^N$, $N\geq 1$,  satisfy  \eqref{59},
and let $R_N=R_N(\{\alpha_{Nj}\}_{j=0}^N)$ be defined in  \eqref{58}. Then 
$\sup_N M_{pb}(R_N)<\infty$, and 
there exists a sequence of families of polynomials
$[p_{Nij}]_{i,j=1}^{2^{N+1}}$, $N\geq 1$,
such that 
 \begin{equation} \label{614} \| [R_{N,(p_{Nij})}]_{i,j=1}^{2^{N+1}}\|\geq 
C_N\| [p_{Nij}]_{i,j=1}^{2^{N+1}}\|_{H^\infty(\ell^2_{2^{N+1}})},
 \ \ 
\text{ where } \ C_N\to \infty,  \end{equation} 
here $R_{N,(p)}$ are defined for $R_N$ as in Theorem \ref{thm62}  
(see [DP, Theorem 3.1]).
Let a sequence $\{\varepsilon_N\}_N$ be such that $\varepsilon_N>0$ and 
 \begin{equation} \label{615}  C = \sup_N 2^{N+1}\varepsilon_N < \infty.  \end{equation} 
Applying Lemma \ref{lem41}  to $N$ and $\varepsilon_N$ and Theorem  \ref{thm62}  
to $N$, $R_N$, and $\varepsilon_N$ we obtain 
a sequence $\{\delta_N\}_N$ such that $\delta_N$ 
satisfies  the conclusion of Lemma \ref{lem41}  and Theorem \ref{thm62}   
for every $N$. Let $\{a_{Nl}\}_{l=0}^N$ and
$\{c_{Nl}\}_{l=0}^N$ be from Lemma \ref{lem51} applied to $\delta_N$.
Put $D_N = \text{\bf D}(\{a_{Nl}\}_{l=0}^N,\{c_{Nl}\}_{l=0}^N)$, and
$D_{Nj} = \text{\bf D}_j(\{a_{Nl}\}_{l=0}^N,\{c_{Nl}\}_{l=0}^N)$,
$0\leq j\leq N$, where $\text{\bf D}$ and $\text{\bf D}_j$ are defined in \eqref{52}  and  \eqref{53}, respectively. 
Denote the eigenvalues of $D_N$ by $\lambda_{Nl}$, $1\leq l\leq 2^{N+1}$.
By Lemma \ref{lem51}, $\lambda_{Nl}\in(0,\delta_N)$ and $\lambda_{Nl}\neq\lambda_{Nk}$, 
$l\neq k$. Also, $D_{Nj}$ are diagonal with respect to the standard basis in $\mathbb  C^{2^{N+1}}$, 
and the elements of $D_{Nj}$ are from $(0,\delta_N)$. 
Let $\nu_{Nl}\in(0,\delta_N)$ be such that 
$\nu_{Nl}\neq\nu_{Nk}$ for $l\neq k$,
and $\nu_{Nl}\neq\lambda_{Nk}$ for all $l$, $k$, $1\leq k,l\leq 2^{N+1}$.
Denote by
$D_{N\star}$ the  diagonal operator with respect 
to the standard basis on $\mathbb  C^{2^{N+1}}$, 
with the eigenvalues $\nu_{Nl}$.
Put $B_N=\prod_{l=1}^{2^{N+1}}b_{\lambda_{Nl}}^{N+1}b_{\nu_{Nl}}^{N+1}$ and
 $$T_N = R_N +\textstyle\bigoplus_{n=0}^N D_{N\star} \oplus\bigoplus _{n=0}^N D_N.$$
By Lemma \ref{lem43}, $T_N\approx T_{B_N}$. 

Put $T_{N0} = \oplus_{n=0}^N D_N+\text{\bf S}_{\mathbb  C^{2^{N+1}}N}$
and $T_{N1} = \oplus_{n=0}^N D_{N\star}+\text{\bf S}_{\mathbb  C^{2^{N+1}}N}^\ast$.
By Corollary \ref{cor42} applied to $T_{N0}$ and $T_{N1}$ 
there exist operators $X_{N0}$ and $X_{N1}$ such that

\noindent 
$X_{N0}T_{N0} = (\oplus_{l=1}^{2^{N+1}}T_{b_{\lambda_{Nl}}^{N+1}})X_{N0}$,
 $X_{N1}T_{N1} = (\oplus_{l=1}^{2^{N+1}}T_{b_{\nu_{Nl}}^{N+1}})X_{N1}$,
 $\|X_{N0}\|\leq 1+\varepsilon_N$, $\|X_{N0}^{-1}\|\leq 1+\varepsilon_N$,
$\|X_{N1}\|\leq 1+\varepsilon_N$, and $\|X_{N1}^{-1}\|\leq 1+\varepsilon_N$.
Therefore,  \begin{equation} \label{616}  M_{pb}(T_{N0})\leq(1+\varepsilon_N)^2 \ \ \text{ and} \ \ 
M_{pb}(T_{N1})\leq(1+\varepsilon_N)^2.  \end{equation} 
By Theorem  \ref{thm62},
 \begin{equation} \label{617} \|T_{N,(p)} - R_{N,(p)}\|\leq\varepsilon_N\|p\|_\infty 
\ \ \text{ for every polynomial }\ p.  \end{equation} 
From \eqref{616}  and \eqref{617}  we conclude that $\sup_N M_{pb}(T_N)<\infty$.

From \eqref{615}  and \eqref{617}  we have that
 \begin{equation} \label{618}\begin{aligned}\big\| [T_{N,(p_{Nij})} - R_{N,(p_{Nij})}]_{i,j=1}^{2^{N+1}}\big\|\leq 
2^{N+1}\sup_{1\leq i,j\leq 2^{N+1}}\|T_{N,(p_{Nij})} - R_{N,(p_{Nij})}\| \\
\leq
2^{N+1}\varepsilon_N\sup_{1\leq i,j\leq 2^{N+1}}\|p_{Nij}\|_\infty
\leq C\big\| [p_{Nij}]_{i,j=1}^{2^{N+1}}\big\|_{H^\infty(\ell^2_{2^{N+1}})}. \end{aligned}\end{equation}  

If $C_N>C$, then from \eqref{614} and \eqref{618}
$$\| [T_{N,(p_{Nij})}]_{i,j=1}^{2^{N+1}}\|\geq
\| [R_{N,(p_{Nij})}]_{i,j=1}^{2^{N+1}}\|-
\| [T_{N,(p_{Nij})} - R_{N,(p_{Nij})}]_{i,j=1}^{2^{N+1}}\|$$
$$\geq
(C_N - C) \| [p_{Nij}]_{i,j=1}^{2^{N+1}}\|_{H^\infty(\ell^2_{2^{N+1}})},$$
therefore, $\sup_N M_{cpb}(T_N)=\infty$. \end{proof}

\begin{corollary}\label{cor64} There exist an operator $T$ 
and a Blashcke product $B$ with zeros from $(0,1)$ 
such that $T$ is polynomially bounded, $T$ is not similar 
to a contraction, and $T\sim T_B$, where $T_B$ is defined in \eqref{17}.
\end{corollary}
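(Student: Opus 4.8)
The plan is to glue the finite-dimensional pieces furnished by Theorem \ref{thm63} inside the disk by composing with disk automorphisms pushed far toward the boundary, and then to recognise the outcome as a perturbed direct sum of Jordan blocks whose Jordan model is $T_B$. Concretely, I would take the sequences $\{T_N\}_N$, $\{B_N\}_N$ of Theorem \ref{thm63} (so $T_N\approx T_{B_N}$, $M:=\sup_N M_{pb}(T_N)<\infty$, $\sup_N M_{cpb}(T_N)=\infty$, each $B_N$ a finite Blaschke product with zeros in $(0,1)$), let $\{r_N\}_N$ be given by Lemma \ref{lem31} for $\{B_N\}_N$, and choose $w_N\in(0,1)$ inductively with $w_N\ge r_N$, with $w_N$ exceeding every zero of $B_N$ (so that every zero of $B_N\circ\beta_{w_N}$ again lies in $(0,1)$, since $0<\beta_w(\lambda)<1$ whenever $0<\lambda<w<1$), and with the finite zero set of $B_N\circ\beta_{w_N}$ disjoint from the zeros of $B_1\circ\beta_{w_1},\dots,B_{N-1}\circ\beta_{w_{N-1}}$; this last demand is satisfiable because $w\mapsto\beta_w(\lambda)$ is injective on $(0,1)$, so only finitely many $w$ are excluded. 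Lemma \ref{lem31} then provides $\zeta_N\in\mathbb T$ with $B:=\prod_N\zeta_N B_N\circ\beta_{w_N}$ convergent, an inner function whose zeros lie in $(0,1)$ and form the disjoint union of the zero sets of the factors. (One could instead extract the $w_N,\zeta_N$ from Theorem \ref{thm38} with $\varphi_N\equiv1$; the generalized Carleson condition \eqref{36} it yields is not needed here, though it is what Section 7 builds on.)

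I would then set $T:=\bigoplus_N\beta_{w_N}(T_N)$. Since $\sigma(T_N)=\sigma(T_{B_N})\subset(0,1)$, each $\beta_{w_N}(T_N)$ is well defined, and $\|\beta_{w_N}(T_N)\|\le M_{pb}(T_N)\,\|\beta_{w_N}\|_\infty=M_{pb}(T_N)\le M$ because $\|\beta_{w_N}\|_\infty=1$. Corollary \ref{cor12} then gives $M_{pb}(T)=\sup_N M_{pb}(T_N)<\infty$ and $M_{cpb}(T)=\sup_N M_{cpb}(T_N)=\infty$, so $T$ is polynomially bounded and, by the similarity criterion of \cite{Pa} recalled in the Introduction, not similar to a contraction. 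The remaining, and main, task is $T\sim T_B$.

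Put $\theta_N:=\zeta_N B_N\circ\beta_{w_N}$, so that $T_{\theta_N}=T_{B_N\circ\beta_{w_N}}$. As $\beta_{w_N}$ is holomorphic on a neighbourhood of $\overline{\mathbb D}\supset\sigma(T_N)=\sigma(T_{B_N})$, an invertible intertwiner of $T_N$ and $T_{B_N}$ also intertwines $\beta_{w_N}(T_N)$ and $\beta_{w_N}(T_{B_N})$, and Lemma \ref{lem32} gives $\beta_{w_N}(T_{B_N})\cong T_{\theta_N}$; hence $\beta_{w_N}(T_N)\approx T_{\theta_N}$ for every $N$. The similarity constants need not be uniform in $N$, but rescaling the $N$-th intertwiner and its inverse by small positive scalars still produces bounded quasiaffinities in both directions, so $T\sim\bigoplus_N T_{\theta_N}$. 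Now $\bigoplus_N T_{\theta_N}$ is a contraction annihilated by $B$ (each $\theta_N$ divides $B$), hence a $C_0$-contraction, with minimal function $\operatorname{lcm}_N\theta_N=B$, the equality holding since the zero sets are pairwise disjoint. Finally $\bigoplus_N T_{\theta_N}$ is cyclic: with $v_N$ a cyclic vector of the finite-dimensional $T_{\theta_N}$ and $x:=\bigoplus_N c_N v_N$, $c_N\ne0$ tending to $0$ fast enough, for each $N$ there is $f_N:=(B/\theta_N)h_N\in H^\infty$ ($h_N$ a polynomial with $h_N(T_{\theta_N})$ inverting $(B/\theta_N)(T_{\theta_N})$, possible as $B/\theta_N$ is coprime to $\theta_N$) with $f_N(T_{\theta_M})=0$ for $M\ne N$ and $f_N(T_{\theta_N})=I$; since $f_N(\bigoplus_M T_{\theta_M})$ is a weak-$\ast$ limit of polynomials in $\bigoplus_M T_{\theta_M}$, it maps $\bigvee_{k\ge0}(\bigoplus_M T_{\theta_M})^kx$ into itself, and $f_N(\bigoplus_M T_{\theta_M})x$ equals $c_N v_N$ in the $N$-th summand and $0$ elsewhere, so $\mathcal K_{\theta_N}$ lies in that cyclic subspace; letting $N$ range shows $x$ is cyclic. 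A cyclic $C_0$-contraction with minimal function $B$ is quasisimilar to $T_B$ (see \cite{SFBK}), so $T\sim\bigoplus_N T_{\theta_N}\sim T_B$, and by transitivity of quasisimilarity $T\sim T_B$.

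I expect the decisive step to be this last one — showing that, after the perturbation, the infinite direct sum $\bigoplus_N T_{\theta_N}$ collapses, up to quasisimilarity, to the single cyclic model $T_B$. It rests entirely on the pairwise coprimality of the inner functions $\theta_N$, which is what forces the Jordan model of $\bigoplus_N T_{\theta_N}$ to be the cyclic operator $T_B$; securing this coprimality is the reason one needs the flexibility in choosing the $w_N$, and it is the only place where the concrete structure (finite Blaschke products with real zeros) delivered by Theorem \ref{thm63} is genuinely used.
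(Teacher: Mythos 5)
Your proof is correct and follows essentially the same route as the paper: form $T=\oplus_N\beta_{w_N}(T_N)$ from the operators of Theorem \ref{thm63}, use Corollary \ref{cor12} for the polynomial bound and the failure of complete polynomial boundedness, and identify $T$ up to quasisimilarity with $T_B$ through the pairwise coprime factors $B_N\circ\beta_{w_N}$. The paper compresses the last step ($\oplus_N T_{B_N\circ\beta_{w_N}}\sim T_B$) into a single line, relying on the standard $C_0$-theory (coprimality, cyclicity, Jordan model) that you spell out explicitly.
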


\begin{proof} Let  $\{T_N\}_N$ and $\{B_N\}_N$ be the
sequences of operators  and of finite Blaschke products  with zeros
from $(0,1)$ from Theorem \ref{thm63}. By Lemma \ref{lem31},  there exists a sequence
$\{w_N\}_N\subset(0,1)$ such that $B=\prod_N\zeta_N B_N\circ\beta_{w_N}$
converges, and $ B_N\circ\beta_{w_N}$ are pairwise coprime.

Put $T=\oplus_N\beta_{w_N}(T_N)$. By Lemma \ref{lem32},
$\beta_{w_N}(T_N)\approx \beta_{w_N}(T_{B_N})\cong T_{B_N\circ\beta_{w_N}}$,
therefore, $T\sim T_B$. 
By Corollary \ref{cor12}, $T$ is polynomially bounded, and $T$ is not similar 
to a contraction. \end{proof}

Applying Corollary \ref{cor23} to the operator $T$ from Corollary \ref{cor64}, 
one can obtain a polynomially bounded operator $\text{\bf T}$ such that
$\text{\bf T}\prec S$, and $\text{\bf T}$ is not similar 
to a contraction. Denote by $X$ and $Y$ quasiaffinities 
such that $YT=T_BY$ and $XT_B = TX$. By [SFBK, Theorem X.2.10], 
there exists a function $\varphi\in H^\infty$ such that 
$YX=\varphi(T_B)$.
The constructions of the operators $R_N$ from [DP] and 
 of the operator $T$ from Corollary  \ref{cor64} are explicit, therefore, 
the quasiaffinities $X$ and $Y$ and a function $\varphi$
can be computed, but the author cannot do it. Also, 
since $\text{\bf T}$ is polynomially bounded and
$\text{\bf T}\prec S$, it follows from [BP] that
$\text{\bf T}\sim S$ if and only if 
$\mu_{\text{\bf T}}=1$. But the author cannot
 compute $\mu_{\text{\bf T}}$.

\section{The construction of quasisimilar operators such that the product 
of intertwining quasiaffinities is an outer function of operators}

\begin{theorem}\label{thm71} There exist an operator 
$T\colon\mathcal H\to\mathcal H$,  
a Blashcke product $B$ with zeros from $(0,1)$, 
an outer function $g\in H^\infty$, and quasiaffinities
$X\colon\mathcal H\to\mathcal K_B$, $Y\colon\mathcal K_B\to\mathcal H$, 
such that $T$ is polynomially bounded, $T$ is not similar 
to a contraction, $XT=T_BX$, $YT_B=TY$, and $XY=g(T_B)$,
where $T_B$ is defined in \eqref{17}.
\end{theorem}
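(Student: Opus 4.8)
\emph{Outline of the construction.} The operator $T$ will be $\bigoplus_N\beta_{w_N}(T_N)$ for the finite blocks $T_N$ of Theorem~\ref{thm63}, suitably conjugated and glued; $g$ will be the outer function of Lemma~\ref{lem34}; and $X,Y$ will be assembled from block quasiaffinities whose product is forced to equal $g$ of the corresponding Jordan block by Theorem~\ref{thm38}. Concretely, let $\{T_N\}_N$, $\{B_N\}_N$ be from Theorem~\ref{thm63} (finite Blaschke products $B_N$ with zeros in $(0,1)$, $T_N\approx T_{B_N}$, $\sup_N M_{pb}(T_N)<\infty$, $\sup_N M_{cpb}(T_N)=\infty$), fix invertible $V_N$ with $V_NT_N=T_{B_N}V_N$, and set $t_N=\min(1,\|V_N\|^{-1},\|V_N^{-1}\|^{-1})\in(0,1]$. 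Let $g$ be from Lemma~\ref{lem34}. Apply Theorem~\ref{thm38} with $C=1$, this $g$, the Blaschke products $B_N$, and the nonzero constant functions $\varphi_N\equiv t_N^2$. Writing $\theta_N=B_N\circ\beta_{w_N}$, this yields $\delta>0$, points $w_N\in(0,1)$, unimodular constants $\zeta_N$ and functions $\psi_N\in H^\infty$ with: $B=\prod_N\zeta_N\theta_N$ convergent, the generalized Carleson condition \eqref{36}, $g-t_N^2\psi_N\in\theta_NH^\infty$, and $\operatorname{dist}(\psi_N,\theta_NH^\infty)\le 1$. Since the $w_N$ are free inside intervals $[r_N,1)$, we may further take $w_N>\max\{\lambda:B_N(\lambda)=0\}$ (so the zeros of $B$ lie in $(0,1)$) and generic, so that the $\theta_N$ are pairwise coprime (as in Corollary~\ref{cor64}). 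Put $\mathcal H=\bigoplus_N\mathcal H_N$ (the space of $T_N$) and $T=\bigoplus_N\beta_{w_N}(T_N)$; since $\|\beta_{w_N}(T_N)\|\le M_{pb}(\beta_{w_N}(T_N))=M_{pb}(T_N)$ by \eqref{14}, Corollary~\ref{cor12} and Theorem~\ref{thm63} show $T$ is polynomially bounded and not similar to a contraction.

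\emph{Block quasiaffinities.} Fix $N$. With $U_{w_N}$ from Lemma~\ref{lem32}, the operator $W_N:=U_{w_N}|_{\mathcal K_{\theta_N}}^{-1}V_N\colon\mathcal H_N\to\mathcal K_{\theta_N}$ is invertible, intertwines $\beta_{w_N}(T_N)$ with $T_{\theta_N}$, and $\|W_N^{\pm1}\|=\|V_N^{\pm1}\|$. Put $X_N=\psi_N(T_{\theta_N})\,t_NW_N$ and $Y_N=t_NW_N^{-1}$. Then $X_N\beta_{w_N}(T_N)=T_{\theta_N}X_N$, $Y_NT_{\theta_N}=\beta_{w_N}(T_N)Y_N$, and, using $t_N^2\psi_N\equiv g\pmod{\theta_N}$ and $\theta_N(T_{\theta_N})=\mathbb O$, $X_NY_N=t_N^2\psi_N(T_{\theta_N})=g(T_{\theta_N})$. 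Here $\|t_NW_N^{\pm1}\|\le1$ and $\|\psi_N(T_{\theta_N})\|=\operatorname{dist}(\psi_N,\theta_NH^\infty)\le1$ (Nehari's theorem, as in the proof of Lemma~\ref{lem35}), so $\|X_N\|,\|Y_N\|\le1$ uniformly in $N$. Finally $Y_N$ is invertible, hence a quasiaffinity, and $X_N$ is a quasiaffinity because $\psi_N(T_{\theta_N})$ is: $g$ outer forces $g(\lambda)\ne0$ at each zero $\lambda$ of $\theta_N$, hence $\psi_N(\lambda)\ne0$ there, so the inner factor of $\psi_N$ is coprime with $\theta_N$ (cf. Remark~\ref{rem25}).

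\emph{Gluing.} By the generalized Carleson condition \eqref{36} together with the coprimality of the $\theta_N$, the subspaces $\mathcal K_{\theta_N}\subset\mathcal K_B$ form an unconditional basis of $\mathcal K_B$; by [Ni, Theorem~II.C.3.2.14] this provides an invertible $a\colon\mathcal K_B\to\bigoplus_N\mathcal K_{\theta_N}$ with $aT_B=(\bigoplus_NT_{\theta_N})a$. As $\sup_N\|X_N\|,\sup_N\|Y_N\|\le1$, the maps $\bigoplus_NX_N$ and $\bigoplus_NY_N$ are bounded quasiaffinities intertwining $T$ with $\bigoplus_NT_{\theta_N}$. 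Set $X=a^{-1}\bigl(\bigoplus_NX_N\bigr)\colon\mathcal H\to\mathcal K_B$ and $Y=\bigl(\bigoplus_NY_N\bigr)a\colon\mathcal K_B\to\mathcal H$. Then $X,Y$ are quasiaffinities, $XT=T_BX$, $YT_B=TY$, and
$$XY=a^{-1}\Bigl(\bigoplus_NX_NY_N\Bigr)a=a^{-1}\Bigl(\bigoplus_Ng(T_{\theta_N})\Bigr)a=a^{-1}\,g\Bigl(\bigoplus_NT_{\theta_N}\Bigr)\,a=g(T_B),$$
which, with the properties of $T$ established above and $g$ outer, proves the theorem.

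\emph{The main obstacle.} The heart of the matter is keeping the block intertwiners uniformly bounded while pinning their product to $g$ of the Jordan block. A similarity pair for $T_N\approx T_{B_N}$ will not do: the zeros of $B_N$ are crowded into an ever smaller subinterval of $(0,1)$, so $\|V_N\|\,\|V_N^{-1}\|\to\infty$. The remedy — which is precisely the role of Theorem~\ref{thm38} — is to conjugate by $\beta_{w_N}$ with $w_N\to1$, pushing the relevant spectrum towards $1$ where the fixed outer function $g$ of Lemma~\ref{lem34} is super-exponentially small, and to factor $g\equiv t_N^2\cdot\psi_N\pmod{\theta_N}$: the scalar $t_N^2$ absorbs the blow-up of $V_N^{\pm1}$ while $\psi_N$ stays at bounded distance from $\theta_NH^\infty$; making this simultaneously possible for all $N$ is exactly the content of Theorem~\ref{thm38} (and of Proposition~\ref{prop37} behind it). The second delicate point is that the gluing genuinely requires the \emph{similarity} $T_B\approx\bigoplus_NT_{\theta_N}$ furnished by \eqref{36}, not merely the quasisimilarity used in Corollary~\ref{cor64}: the cancellation $XY=a^{-1}(\bigoplus_NX_NY_N)a=g(T_B)$ uses $a^{-1}a=I$, whereas a non-invertible intertwiner would produce $(\varphi g)(T_B)$ with an uncontrolled inner factor $\varphi$, which need not be outer.
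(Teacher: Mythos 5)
Your proof is correct and follows the same architecture as the paper's: the blocks $T_N$, $B_N$ of Theorem~\ref{thm63}, the function $g$ of Lemma~\ref{lem34}, the simultaneous interpolation of Theorem~\ref{thm38}, the unitaries $U_{w_N}$ of Lemma~\ref{lem32}, and the unconditional-basis gluing via \eqref{36} and [Ni, II.C.3.1.4, II.C.3.2.14]. The one genuine variation is in how the block intertwiners are normalized: the paper fixes intertwiners $X_N,Y_N$ with $\|X_N\|,\|Y_N\|\le C$, identifies $X_NY_N=\varphi_N(T_{B_N})$ via the commutant lifting theorem [SFBK, Theorem X.2.10], and feeds these $\varphi_N$ to Theorem~\ref{thm38}; you instead rescale a single similarity $V_N$ by $t_N=\min(1,\|V_N\|^{-1},\|V_N^{-1}\|^{-1})$ and feed Theorem~\ref{thm38} the constants $\varphi_N\equiv t_N^2$. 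This buys you a slightly cleaner block picture ($Y_N$ is a multiple of an invertible operator, so trivially a quasiaffinity, and the commutant-lifting identification of $X_NY_N$ is not needed), at no cost, since Proposition~\ref{prop37} only requires $\varphi_N$ to be nonvanishing at the zeros of $B_N$ and the resulting threshold $r_{2N}$ is allowed to depend on how small $t_N$ is. Your explicit insistence that $w_N$ exceed the zeros of $B_N$ (so the zeros of $B$ land in $(0,1)$) and that the $\theta_N$ be pairwise coprime tidies up points the paper leaves implicit; the only slightly loose step is asserting an invertible $a$ with $aT_B=(\bigoplus_N T_{\theta_N})a$ directly, whereas the cited splitting naturally produces the coordinate map $J$ intertwining the adjoints, from which $a=(J^{-1})^\ast$ does what you want.
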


\begin{proof} Let $\{T_N\}_N$ and $\{B_N\}_N$ be
the sequences of operators 
 and of finite Blaschke products with zeros
from $(0,1)$ from Theorem \ref{thm63}, respectively. 
Let $C>0$ be fixed. Denote by $\mathcal H_N$ 
the finite dimensional spaces on which $T_N$ acts. 
There exists invertible
operators $X_N\colon\mathcal H_N\to\mathcal K_{B_N}$, 
$Y_N\colon\mathcal K_{B_N}\to\mathcal H_N$ such that
$X_NT_N=T_{B_N}X_N$, $Y_NT_{B_N}=T_NY_N$,
$\|X_N\|\leq C$, $\|Y_N\|\leq C$. 
By [SFBK, Theorem X.2.10], there exist functions $\varphi_N\in H^\infty$
such that $X_NY_N=\varphi_N(T_{B_N})$. 
Note that 
 $\varphi_N(\lambda)\neq 0$ for every $\lambda\in\mathbb  D$
such that $B_N(\lambda)= 0$, for every index $N$.

Let $g$ be from Lemma \ref{lem34}. Applying Theorem \ref{thm38} to 
$C$, $g$, sequences of  $B_N$ and of $\varphi_N$ we obtain 
$\delta>0$ and sequences of $w_N\in(0,1)$, of
$\zeta_N\in\mathbb  T$ and of $\psi_N \in H^\infty$
 which satisfy the conclusion of Theorem \ref{thm38}.
Put $$B=\prod_N\zeta_NB_N\circ\beta_{w_N} \ \ \text{ and } 
\ \ T=\oplus_N\beta_{w_N}(T_N).$$ 
By Theorem \ref{thm63}  and Corollary  \ref{cor12}, $T$ is polynomially bounded, 
and $T$ is not similar 
to a contraction. 

Put $U_N=U_{w_N}|_{\mathcal K_{B_N\circ\beta_{w_N}}}
\colon\mathcal K_{B_N\circ\beta_{w_N}}\to\mathcal K_{B_N}$, where $U_w$ is defined in 
Lemma \ref{lem32}. Then $U_N$ is unitary, and 
$U_NT_{B_N\circ\beta_{w_N}} = \beta_{w_N}(T_{B_N})U_N$.
Put $$X_{1N} = U_N^{-1}(\psi_N\circ\beta_{w_N})(T_{B_N})X_N \ \
\text{ and } \ \ Y_{1N} = Y_NU_N.$$
Then $$X_{1N} \beta_{w_N}(T_N) = T_{B_N\circ\beta_{w_N}}X_{1N}, \ \
Y_{1N}T_{B_N\circ\beta_{w_N}} = \beta_{w_N}(T_N)Y_{1N}, $$
$$ X_{1N}Y_{1N}=g(T_{B_N\circ\beta_{w_N}}), \ \ \
\|X_{1N}\|\leq C^2, \ \ \ \|Y_{1N}\|\leq C, $$
and $X_{1N}$ and $Y_{1N}$ are quasiaffinities.
Indeed, 
$X_N\beta_{w_N}(T_N)=\beta_{w_N}(T_{B_N})X_N$ and 
$Y_N\beta_{w_N}(T_{B_N}) = \beta_{w_N}(T_N)Y_N$.
Therefore, 
$$X_{1N} \beta_{w_N}(T_N) = 
U_N^{-1}\psi_N(\beta_{w_N}(T_{B_N}))\beta_{w_N}(T_{B_N})X_N $$
$$ = 
 U_N^{-1}\beta_{w_N}(T_{B_N})\psi_N(\beta_{w_N}(T_{B_N}))X_N
=
T_{B_N\circ\beta_{w_N}}U_N^{-1}\psi_N(\beta_{w_N}(T_{B_N}))X_N $$
$$ =
T_{B_N\circ\beta_{w_N}}X_{1N}.$$
Also, $$Y_{1N}T_{B_N\circ\beta_{w_N}} = Y_NU_NT_{B_N\circ\beta_{w_N}}
=Y_N\beta_{w_N}(T_{B_N})U_N 
=\beta_{w_N}(T_N)Y_{1N}.$$
Furthermore, 
$$(\psi_N\circ\beta_{w_N})(T_{B_N})X_N Y_N =
(\psi_N\circ\beta_{w_N})(T_{B_N})\varphi_N(T_{B_N})=
(\varphi_N\cdot\psi_N\circ\beta_{w_N})(T_{B_N}).$$
 Since 
$g-\psi_N\cdot\varphi_N\circ\beta_{w_N}\in (B_N\circ\beta_{w_N}) H^\infty$, 
we have that $g\circ\beta_{w_N}-\varphi_N\cdot\psi_N\circ\beta_{w_N}
\in B_N H^\infty$, 
therefore, 
$(\varphi_N\cdot\psi_N\circ\beta_{w_N})(T_{B_N})= 
(g\circ\beta_{w_N})(T_{B_N}) = g(\beta_{w_N}(T_{B_N}))$.
Thus, $X_{1N}Y_{1N} = U_N^{-1}g(\beta_{w_N}(T_{B_N}))U_N = 
g(T_{B_N\circ\beta_{w_N}})$.
By Nehari's theorem, 
$$\|X_{1N}\| \leq
\|(\psi_N\circ\beta_{w_N})(T_{B_N})\|\|X_N\|\leq
C\operatorname{dist}(\psi_N\circ\beta_{w_N}, B_NH^\infty)$$
$$=
C\operatorname{dist}(\psi_N, (B_N\circ\beta_{w_N}) H^\infty)\leq C^2. $$
Clearly,  $\|Y_{1N}\| =\|Y_N\| \leq C$. Since 
$X_{1N}Y_{1N}=g(T_{B_N\circ\beta_{w_N}})$ and $g$ is outer, 
$X_{1N}Y_{1N}$ is a quasiaffinity, and, since $Y_{1N}$ is invertible,
we conclude that $X_{1N}$ is a quasiaffinity.

We have $\mathcal K_B= \bigvee_N\mathcal K_{B_N\circ\beta_{w_N}}$.
Define 
a linear mapping $J\colon\mathcal K_B\to\oplus_N\mathcal K_{B_N\circ\beta_{w_N}}$
by the formula $J\sum_N x_N = \oplus_N x_N$, 
where $x_N\in\mathcal K_{B_N\circ\beta_{w_N}}$,
the cardinality of the set of $N$ such that $x_N\neq 0$ is finite.
Since $\{B_N\circ\beta_{w_N}\}_N$ satisfy the condition  \eqref{36}, 
$J$ is expanded on $\mathcal K_B$, is bounded and invertible
(its inverse is bounded) [Ni, Theorems II.C.3.1.4 and II.C.3.2.14].  
Since the spaces $\mathcal K_{B_N\circ\beta_{w_N}}$ are 
invariant for $T_B^\ast$, it is easy to see that 
$JT_B^\ast = (\oplus_N T_{B_N\circ\beta_{w_N}})^\ast J $.

Define the following operators:
$$Z\colon\oplus_N\mathcal H_N\to \mathcal K_B, \ \ \ Z = J^{-1}(\oplus_NY_{1N}^\ast),$$
$$W\colon\mathcal K_B\to\oplus_N\mathcal H_N, \ \ \ W = (\oplus_NX_{1N}^\ast)J.$$
It is easy to see that $Z$ and $W$ are quasiaffinities.
Also,  \begin{equation} \label{71} ZT^\ast = T_B^\ast Z, \ \ WT_B^\ast = T^\ast W,
\ \  \text{ and } \ \ ZW = g_\ast(T_B^\ast),  \end{equation}  
where $g_\ast(z)=\overline{g(\overline z)}$, $z\in\mathbb  D$. Indeed, 
$$ ZT^\ast = 
J^{-1}(\oplus_NY_{1N}^\ast)\bigl(\oplus_N(\beta_{w_N}(T_N))\bigr)^\ast
=J^{-1}\bigl(\oplus_NY_{1N}^\ast(\beta_{w_N}(T_N))^\ast\bigr)$$
$$
=J^{-1}(\oplus_N T_{B_N\circ\beta_{w_N}})^\ast(\oplus_NY_{1N}^\ast)=
T_B^\ast J^{-1}(\oplus_NY_{1N}^\ast)=T_B^\ast Z.$$
Also, 
$$ WT_B^\ast =(\oplus_NX_{1N}^\ast)J T_B^\ast =
(\oplus_NX_{1N}^\ast)(\oplus_N T_{B_N\circ\beta_{w_N}})^\ast J$$
$$ =\bigl(\oplus_N(\beta_{w_N}(T_N))^\ast\bigr)(\oplus_NX_{1N}^\ast) J 
=T^\ast W.$$
Furthermore, 
$$ZW =J^{-1}(\oplus_NY_{1N}^\ast)(\oplus_NX_{1N}^\ast)J =
J^{-1}\bigl(\oplus_N(X_{1N}Y_{1N})^\ast\bigr)J $$
$$= 
J^{-1}g_\ast\bigl(\oplus_N(T_{B_N\circ\beta_{w_N}})^\ast\bigr)J = g_\ast(T_B^\ast).$$
The conclusion of the theorem follows from \eqref{71}. \end{proof}

\section*{References}


 [AT] T. Ando and K. Takahashi, On operators with unitary $\rho$-dilations, 
{\it Ann. Polon. Math.}, 
{\bf 66} (1997), 11--14.

[B] C. Badea,
Perturbations of operators similar to contractions and the commutator equation. 
{\it Stud. Math.}, {\bf 150} (2002), No.3, 273-293.

[BP] H. Bercovici  and B. Prunaru,   Quasiaffine transforms 
of polynomially bounded operators, 
{\it Arch. Math. (Basel)}, {\bf 71} (1998),  384--387.

[C] G. Cassier, Generalized Toeplitz operators, restriction to invariant subspaces and similarity problems. 
{\it J. Oper. Theory}, {\bf 53} (2005), No. 1, 49-89.

[DP] K. R. Davidson and V. I. Paulsen, Polynomially bounded operators,
 {\it J. reine angew. Math}, {\bf 487} (1997), 153-170.

 [H] P. R. Halmos, Ten problems in Hilbert space, 
{\it Bull. Am. Math. Soc.}, {\bf 76} (1970), 887-933.

[K]  L. K\'erchy, Quasianalytic polynomially bounded operators, {\it Operator Theory: the State of the Art}, Theta, Bucharest, 2016, 75--101.

  [M] W. Mlak, Algebraic polynomially bounded operators, 
{\it Ann. Polon. Math.}, {\bf 29} (1974), 133--139.

[Ni] N. K. Nikolski, {\it  Operators, functions, and systems: an easy reading.
 Volume I:
Hardy, Hankel, and Toeplitz, Volume II: Model operators and systems}, Math. Surveys and Monographs 
{\bf 92}, AMS, 2002.  

  [No] E. A. Nordgren, The ring $N^+$ is not adequate, {\it Acta Sci. Math. (Szeged)},
  {\bf 36} (1974), 203--204.

[Pa] V. I. Paulsen, Every completely polynomially bounded operator is similar to a contraction, 
{\it J. Funct. Anal.}, {\bf 55} (1984), 1-17.

[Pe] V. V. Peller, {\it Hankel operators and their applications},
Springer Monographs in Math. New York,
 NY, Springer, 2003.

[Pi] G. Pisier, A polynomially bounded operator on Hilbert space which 
is not similar to a contraction,
{\it J. Am. Math. Soc.}, {\bf 10} (1997), No.2, 351-369. 

[SFBK] B. Sz.-Nagy,  C. Foias, H. Bercovici and L. K\'erchy, {\it Harmonic analysis of operators on 
Hilbert space},  Springer, New York, 2010.

[T] E. C. Titchmarsh, {\it The theory of functions}. 2nd ed. London, 
Oxford University Press, 1975.
\end{document}